\numberwithin{equation}{section}
\newtheorem{thm}{Theorem}[section]
\newtheorem{prp}[thm]{Proposition}
\newtheorem{lem}[thm]{Lemma}
\theoremstyle{definition}
\newtheorem{rem}[thm]{Remark}
\newtheorem{dfn}[thm]{Definition}
\begin{document}

\allowdisplaybreaks

\renewcommand{\thefootnote}{$\star$}

\newcommand{\arXivNumber}{1412.8001}

\renewcommand{\PaperNumber}{100}

\FirstPageHeading

\ShortArticleName{Tableau Formulas for One-Row Macdonald Polynomials of Types $C_n$ and $D_n$}

\ArticleName{Tableau Formulas for One-Row Macdonald\\ Polynomials of Types $\boldsymbol{C_n}$ and $\boldsymbol{D_n}$\footnote{This paper is a~contribution to the Special Issue
on Orthogonal Polynomials, Special Functions and Applications.
The full collection is available at \href{http://www.emis.de/journals/SIGMA/OPSFA2015.html}{http://www.emis.de/journals/SIGMA/OPSFA2015.html}}}

\Author{Boris {FEIGIN}~$^{\dag^1}$, Ayumu {HOSHINO}~$^{\dag^2}$, Masatoshi {NOUMI}~$^{\dag^3}$, Jun {SHIBAHARA}~$^{\dag^4}$\\ and Jun'ichi {SHIRAISHI}~$^{\dag^5}$}

\AuthorNameForHeading{B.~Feigin, A.~Hoshino, M.~Noumi, J.~Shibahara and J.~Shiraishi}

\Address{$^{\dag^1}$~National Research University Higher School of Economics, International Laboratory\\
\hphantom{$^{\dag^1}$}~of Representation Theory and Mathematical Physics, Moscow, Russia}
\EmailDD{\href{mailto:borfeigin@gmail.com}{borfeigin@gmail.com}}

\Address{$^{\dag^2}$~Kagawa National College of Technology,
355 Chokushi-cho, Takamatsu,\\
\hphantom{$^{\dag^2}$}~Kagawa 761-8058, Japan}
\EmailDD{\href{mailto:hoshino@t.kagawa-nct.ac.jp}{hoshino@t.kagawa-nct.ac.jp}}

\Address{$^{\dag^3}$~Department of Mathematics, Kobe University, Rokko, Kobe 657-8501, Japan}
\EmailDD{\href{mailto:noumi@math.kobe-u.ac.jp}{noumi@math.kobe-u.ac.jp}}

\Address{$^{\dag^4}$~Hamamatsu University School of Medicine,
1-20-1 Handayama, Higashi-ku,\\
\hphantom{$^{\dag^4}$}~Hamamatsu city, Shizuoka 431-3192, Japan}
\EmailDD{\href{mailto:s070521math@yahoo.co.jp}{s070521math@yahoo.co.jp}}

\Address{$^{\dag^5}$~Graduate School of Mathematical Sciences, University of Tokyo, Komaba,\\
\hphantom{$^{\dag^5}$}~Tokyo 153-8914, Japan}
\EmailDD{\href{mailto:shiraish@ms.u-tokyo.ac.jp}{shiraish@ms.u-tokyo.ac.jp}}

\ArticleDates{Received May 19, 2015, in f\/inal form November 26, 2015; Published online December 05, 2015}

\Abstract{We present explicit formulas for the Macdonald polynomials of
types $C_n$ and $D_n$ in the one-row case.
In view of the combinatorial structure,
we call them ``tableau formulas''.
For the construction of the tableau formulas,
we apply some transformation formulas for the basic hypergeometric series
involving very well-poised balanced ${}_{12}W_{11}$ series.
We remark that the correlation functions of the deformed~$\mathcal{W}$ algebra
generators automatically give rise to the tableau formulas when we principally specialize the
coordinate variables.}

\Keywords{Macdonald polynomials; deformed $\mathcal{W}$ algebras}

\Classification{33D52; 33D80}

\renewcommand{\thefootnote}{\arabic{footnote}}
\setcounter{footnote}{0}

\section{Introduction}

I.G.~Macdonald introduced the symmetric polynomials $P_{\lambda}(x; q,t)$ as a
$(q,t)$-deformation of the Schur polynomials $s_{\lambda}(x)$.
Then he extended this construction to the cases of the symmetric Laurent polynomials
invariant under the actions of the Weyl groups of simple root systems~\cite{Mac}.
For type $A_n$,
he gave an explicit combinatorial formula for $P_{\lambda}(x; q,t)$, usually called the
``tableau formula''.
In~\cite{FHSSY} it was shown that the tableau formula for $P_{\lambda}(x; q,t)$
of type $A_n$ can be interpreted as certain specialization of
the correlation functions of the deformed $\mathcal{W}$
algebras of type~$A_n$.
One of our motivations is to explore a little further the correspondence between the
Macdonald polynomials and the deformed $\mathcal{W}$ algebras associated
with simple root systems.
More precisely, we calculate the correlation functions of $\mathcal{W}$ algebras of
types $C_n$ and $D_n$ with principal specializations in coordinate variables
(Def\/inition \ref{DefOfCorrFunc}). As a result, we obtain certain combinatorial expressions,
which we regard as tableau formulas.
Then, on the basis of Lassalle's explicit formulas~\cite{La} (see~\cite{HNS}  for a proof and their generalization),
we prove that they are actually the Macdonald polynomials of types~$C_n$ and~$D_n$
in the one-row case (Theorem~\ref{soukan1}).

We need to recall brief\/ly the Kashiwara--Nakashima tableaux.
In the study of quantum algebras~\cite{KN}, they gave combinatorial descriptions of the crystal bases for the integrable highest weight
representations by using the ``semi-standard tableaux'' of
respective types.
For the crystal bases of the symmetric tensor representations $V(r \Lambda_1)$ of
types $C_n$ and $D_n$, the semi-standard tableau is def\/ined to be a
one-row diagram $(r)$ of size $r$ f\/illed with entries in the ordered set
$I=\{1,2, \dots, n, {\overline{n}}, {\overline{n-1}}, \dots, {\overline{1}} \}$
being arranged in the weakly increasing manner.
The orderings
of $I$ are def\/ined by~(\ref{OrderOfC}) for $C_n$ and by (\ref{OrderOfD}) for $D_n$ respectively.
For a semi-standard tableau of shape $(r)$,
denote by $\theta_i$ the number of the letter $i \in I$ in the tableau. Then we have
$\theta_1 + \theta_2 + \cdots + \theta_n + \theta_{\overline{n}} + \theta_{\overline{n-1}} + \cdots + \theta_{\overline{1}} = r$.
For type $D_n$, we have an aditional condition $\theta_n \theta_{\overline{n}} =  0$
due to the structure of the ordering.

We now present the main results of this paper (Theorems~\ref{main},~\ref{main_C}
and~\ref{main_C2}).
Let $P_{(r)}^{(C_n)}\!(x;q,t,T)$ and $P_{(r)}^{(D_n)}(x;q,t)$ be the Macdonald polynomials of
types $C_n$ and $D_n$ respectively
attached to a~single row~$(r)$. As
for the notation, see Appendix~\ref{appendixA.2}.

\begin{thm}
We have the following tableau formulas
in the one-row cases:
\begin{gather}
 P_{(r)}^{(C_n)}\big(x;q,t,t^2/q\big)=
\frac{(q;q)_r}{(t;q)_r}
    \displaystyle\sum_{\theta_{1}+\theta_{2}+\cdots+
    \theta_{\overline{1}}=r}
\prod_{k \in I}
\frac{(t;q)_{\theta_{k}}}{(q;q)_{\theta_{k}}}
\nonumber
\\
 \hphantom{P_{(r)}^{(C_n)}\big(x;q,t,t^2/q\big)=}{}
  \times \prod_{1\leq l\leq n}\frac{(t^{n-l+1}q^{\theta_{l}+
    \cdots+\theta_{\overline{l+1}}};q)_{\theta_{\overline{l}}}
    (t^{n-l+2}q^{\theta_{l+1}+\cdots+\theta_{\overline{l+1}}-1};q)_
    {\theta_{\overline{l}}}}
    {(t^{n-l+2}q^{\theta_{l}+\cdots+\theta_
    {\overline{l+1}}-1};q)_{\theta_{\overline{l}}}
    (t^{n-l+1}q^{\theta_{l+1}+\cdots+\theta_{\overline{l+1}}};q)_
    {\theta_{\overline{l}}}}\nonumber\\
 \hphantom{P_{(r)}^{(C_n)}\big(x;q,t,t^2/q\big)=}{}
  \times
    x_1^{\theta_{1}-\theta_{\overline{1}}} x_2^{\theta_{2}-\theta_{\overline{2}}}\cdots
    x_n^{\theta_{n}-\theta_{\overline{n}}}, \label{MacOfCt^2/q} \\
P_{(r)}^{(D_n)}(x;q,t)=
\frac{(q;q)_r}{(t;q)_r}\sum_{
\theta_1 + \theta_2 + \cdots +\theta_{\overline{1}} = r \atop
\theta_n\theta_{\overline{n}}=0}
\prod_{k \in I}
\frac{(t;q)_{\theta_{k}}}{(q;q)_{\theta_{k}}} \nonumber\\
\hphantom{P_{(r)}^{(D_n)}(x;q,t)=}{}
 \times \prod_{1 \leq l \leq n-1}\frac{(t^{n-l-1} q^{\theta_{l} + \theta_{l+1} + \cdots
 + \theta_{\overline{l+1}} +1 } ;q)_{\theta_{\overline{l}}}(t^{n-l} q^{\theta_{l+1} + \theta_{l+2} + \cdots
 + \theta_{\overline{l+1}}} ;q)_{\theta_{\overline{l}}}}
{(t^{n-l} q^{\theta_{l} + \theta_{l+1} + \cdots + \theta_{\overline{l+1}} } ;q)_{\theta_{\overline{l}}}(t^{n-l-1} q^{\theta_{l+1} + \theta_{l+2} + \cdots + \theta_{\overline{l+1}} + 1} ;q)_{\theta_{\overline{l}}}
}\nonumber\\
\hphantom{P_{(r)}^{(D_n)}(x;q,t)=}{}
\times
x_1^{\theta_1 - \theta_{\overline{1}}}x_2^{\theta_2 - \theta_{\overline{2}}}\cdots
x_n^{\theta_n - \theta_{\overline{n}}}.  \label{MacOfD}
\end{gather}
\end{thm}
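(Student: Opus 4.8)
\medskip
\noindent\textbf{Sketch of the proof.}
The plan is to prove these formulas by showing that the right-hand sides of (\ref{MacOfCt^2/q}) and (\ref{MacOfD}) coincide with Lassalle's explicit one-row formulas~\cite{La} for the Macdonald polynomials of types $C_n$ and $D_n$, whose validity is established in~\cite{HNS}. Each right-hand side is a sum over the tableau data $\theta=(\theta_1,\dots,\theta_n,\theta_{\overline{n}},\dots,\theta_{\overline{1}})$ subject only to $\theta_1+\cdots+\theta_{\overline{1}}=r$ (together with $\theta_n\theta_{\overline{n}}=0$ in type $D_n$), and each $\theta$ contributes the single monomial $x_1^{\theta_1-\theta_{\overline{1}}}\cdots x_n^{\theta_n-\theta_{\overline{n}}}$. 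The comparison therefore reduces to collecting, for each weight $\mu=(\theta_i-\theta_{\overline{i}})_{1\le i\le n}$, the contributions of all $\theta$ with $|\theta|=r$ that realize $\mu$, and matching the result with the corresponding coefficient in Lassalle's formula, the prefactor $(q;q)_r/(t;q)_r$ accounting for the Macdonald normalization (the term $\theta_1=r$ alone already contributes $x_1^{r}$ with coefficient $1$).

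The summation over the $\theta$'s realizing a fixed $\mu$ is performed by eliminating the remaining free parameters one at a time, working through the indices $\overline{1},\overline{2},\dots$ in turn (with each $\theta_i$ recovered from $\theta_i=\mu_i+\theta_{\overline{i}}$). The key structural feature is that the $l$-th factor of the product over $l$ in (\ref{MacOfCt^2/q}) --- all of whose $q$-shifted factorials have length $\theta_{\overline{l}}$ and whose arguments are the telescoping partial sums $\theta_l+\theta_{l+1}+\cdots+\theta_{\overline{l+1}}$ --- involves none of the variables $\theta_{\overline{l'}}$ with $l'<l$; hence, once the relevant $(t;q)_{\theta_k}/(q;q)_{\theta_k}$ and the single $l$-th factor have been collected and the $q$-shifted factorials rewritten, each one-dimensional summation is a terminating very-well-poised balanced basic hypergeometric series. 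Evaluating it is exactly the point at which one invokes a transformation formula for such series: in this generality it is a transformation of a ${}_{12}W_{11}$ series, which degenerates in the cases at hand to Jackson's ${}_{8}W_{7}$ summation (the $q$-analogue of Dougall's ${}_{7}F_{6}$) or to the $q$-Saalsch\"{u}tz sum. Iterating over $l$, the nested product telescopes and collapses to the single product over the parts of $\mu$ occurring in Lassalle's formula. The specialization $T=t^{2}/q$ in type $C_n$ is exactly what puts the relevant ${}_{12}W_{11}$ in summable position --- it enters the $l=n$ factor as $(t^{2}/q;q)_{\theta_{\overline{n}}}$ --- and in type $D_n$ the condition $\theta_n\theta_{\overline{n}}=0$, which reflects the absence of a short simple root, must be imposed before the innermost summation; equivalently, one may read the whole computation as an induction on $n$, the $l=1$ factor furnishing the branching coefficient from $C_n$ to $C_{n-1}$ (respectively from $D_n$ to $D_{n-1}$).

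The main obstacle I anticipate is not conceptual but combinatorial. At every stage one must pair the numerator and denominator $q$-shifted factorials in (\ref{MacOfCt^2/q}) and (\ref{MacOfD}) so that the one-dimensional sum is \emph{manifestly} a balanced very-well-poised series with its parameters in the exact configuration demanded by the ${}_{12}W_{11}$ transformation, and then check that the residual factors produced by the summation feed correctly into the next stage; tracking the shifts $t^{n-l+1}q^{\theta_l+\cdots}$, $t^{n-l+2}q^{\theta_{l+1}+\cdots}$ and their type-$D_n$ counterparts as the summation variables are successively removed, and handling the boundary indices $l=1$ and $l=n$ (respectively $l=n-1$ in type $D_n$), where several of the partial-sum ranges are empty, is where the care is needed. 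I also note that the Weyl-group invariance of the right-hand sides of (\ref{MacOfCt^2/q}) and (\ref{MacOfD}) is not visible term by term, so passing through Lassalle's manifestly invariant formula is what ultimately certifies that symmetry as well. Once the induction is organized, each step is a routine if lengthy application of the quoted hypergeometric identity, and the final comparison with Lassalle's formula is immediate.
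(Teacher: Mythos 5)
Your proposal follows essentially the same route as the paper: reduce both tableau formulas to Lassalle's $G_r$--$P_{(r)}$ relations \eqref{G_to_P} and \eqref{G_to_P_C} (proved in~\cite{HNS}), use the $(\mathbb{Z}/2\mathbb{Z})^n$ symmetry to restrict to dominant monomials $x_1^{m_1}\cdots x_n^{m_n}$, and establish the resulting coefficient identities (Theorems~\ref{N=ippan} and~\ref{mainthm_C}) by eliminating the barred indices one at a time, starting from $\overline{1}$, with each one-dimensional sum recognized as a terminating very well-poised balanced ${}_{12}W_{11}$ and the whole computation organized as an induction on the rank~$n$ --- exactly as in Sections~\ref{section2}--\ref{section6}. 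The one point where your mechanics are off is the claim that each such sum is \emph{evaluated} by Jackson's ${}_{8}W_{7}$ summation or $q$-Saalsch\"utz: the relevant ${}_{12}W_{11}$ is not summable in closed form, only \emph{transformable}. The paper's Theorem~\ref{koushiki-2} (built from the Noumi--Shiraishi expansion~\eqref{q}, Watson's ${}_{8}\phi_7\to{}_4\phi_3$ transformation~\eqref{Watson} and $q$-Saalsch\"utz) converts it into a residual finite sum~\eqref{hitotsumaenoprp}, whose summation index is then re-absorbed as the new outer variable of the rank-$(n-1)$ problem; it is this re-absorption, together with the extra index $i$ carried by the $G_{r-2i}$ terms of Lassalle's expansion (so that the target of the ``telescoping'' is the coefficient of $x^m$ in $G_r$, a sum of products, not a single product), that makes the induction close up. With that adjustment your sketch is the paper's argument.
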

Here and hereafter, we use the standard notation of $q$-shifted factorials
\begin{gather*}
 (z;q)_\infty =\prod_{k=0}^{\infty} (1-q^k z),
\qquad
(z;q)_k=\frac{(z;q)_{\infty}}{(q^kz;q)_{\infty}},\qquad k\in\mathbb{Z},\\
 (a_1, a_2, \dots, a_r;q)_k = (a_1;q)_k (a_2;q)_k \cdots (a_r;q)_k,
\qquad k\in\mathbb{Z}.
\end{gather*}

\begin{rem}
These tableau formulas for
 $P_{(r)}^{(C_n)}(x;q,t,t^2/q)$ and $P_{(r)}^{(D_n)}(x;q,t)$
are obtained by principally specializing the correlation functions of the deformed~$\mathcal{W}$
 algebras of types~$C_{n}$ and~$D_n$ respectively. See Theorem~\ref{soukan1}.
\end{rem}

We can extend the tableau
formula of type $C_n$ in~(\ref{MacOfCt^2/q})
as follows to general~$q$,~$t$ and~$T$.

\begin{thm}
Set $\theta := {\rm min}(\theta_{n}, \theta_{\overline{n}})$ for simplicity of display.
We have
\begin{gather}
P_{(r)}^{(C_n)}(x;q,t,T)=
\frac{(q;q)_r}{(t;q)_r}
    \displaystyle\sum_{\theta_{1}+\theta_{2}+\cdots+
    \theta_{\overline{1}}=r}
\prod_{k \in I {\setminus} \{ n, \overline{n} \} }
\frac{(t;q)_{\theta_{k}}}{(q;q)_{\theta_{k}}}
\frac{(t;q)_{|\theta_{n} - \theta_{\overline{n}}|}}{(q;q)_{|\theta_{n} - \theta_{\overline{n}}|}}
\nonumber
\\
\hphantom{P_{(r)}^{(C_n)}(x;q,t,T)=}{}
     \times \prod_{1\leq l\leq n-1}
\Biggl(
    {(t^{n-l-1}q^{\theta_{l}+
    \cdots+\theta_{n-1}+|\theta_{n} - \theta_{\overline{n}}|+\theta_{\overline{n-1}}+\cdots +
   \theta_{\overline{l+1}} +1};q)_{\theta_{\overline{l}}}
    \over
    (t^{n-l}q^{\theta_{l}+\cdots+\theta_{n-1}+|\theta_{n} - \theta_{\overline{n}}|+\theta_{\overline{n-1}}
+\cdots+
\theta_
    {\overline{l+1}}};q)_{\theta_{\overline{l}}}
  }\nonumber\\
  \hphantom{P_{(r)}^{(C_n)}(x;q,t,T)=}{}
 \times
    {    (t^{n-l}q^{\theta_{l+1}+\cdots\theta_{n-1}+|\theta_{n} - \theta_{\overline{n}}|
   +\theta_{\overline{n-1}}+\cdots+\theta_{\overline{l+1}}};q)_
    {\theta_{\overline{l}}}
    \over
    (t^{n-l-1}q^{\theta_{l+1}+\cdots+
\theta_{n-1}+|\theta_{n} - \theta_{\overline{n}}|+\theta_{\overline{n-1}}+\cdots
+\theta_{\overline{l+1}}+1};q)_
    {\theta_{\overline{l}}} } \Biggr)\nonumber\\
  \hphantom{P_{(r)}^{(C_n)}(x;q,t,T)=}{}
\times
{(T;q)_{\theta}(t^n q^{r-2\theta};q)_{2\theta} \over (q;q)_{\theta}(T t^{n-1} q^{r-\theta};q )_{\theta}
(t^{n-1} q^{r-2\theta +1};q)_{\theta}}
    x_1^{\theta_{1}-\theta_{\overline{1}}} x_2^{\theta_{2}-\theta_{\overline{2}}}\cdots
    x_n^{\theta_{n}-\theta_{\overline{n}}}.\label{C-ext}
\end{gather}
\end{thm}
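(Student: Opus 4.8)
The plan is to show that the right-hand side of (\ref{C-ext}), which I will denote $\widetilde{P}_{(r)}^{(C_n)}(x;q,t,T)$, coincides with the explicit one-row formula for the Macdonald polynomial $P_{(r)}^{(C_n)}(x;q,t,T)$ due to Lassalle~\cite{La}, whose proof and $BC_n$ generalization are given in~\cite{HNS}. Both expressions are Laurent polynomials in $x_1,\dots,x_n$ invariant under signed permutations of the variables, and directly from the summand the coefficient of the top monomial $x_1^r$ in $\widetilde{P}_{(r)}^{(C_n)}$ is $\frac{(q;q)_r}{(t;q)_r}\cdot\frac{(t;q)_r}{(q;q)_r}=1$, so $\widetilde{P}_{(r)}^{(C_n)}$ has leading term $m_{(r)}$, exactly as $P_{(r)}^{(C_n)}$. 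It therefore suffices to match the coefficient of $x^{\mu}=x_1^{\mu_1}\cdots x_n^{\mu_n}$ for each dominant weight $\mu_1\geq\cdots\geq\mu_n\geq 0$ with $|\mu|\leq r$ and $|\mu|\equiv r\pmod 2$.

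First I would extract this coefficient. Writing $\theta_{\overline{i}}=p_i$ and $\theta_i=\mu_i+p_i$ with $p_i\geq 0$, the constraint $\sum_{k\in I}\theta_k=r$ becomes $p_1+\cdots+p_n=N:=(r-|\mu|)/2$, while $\theta=\min(\theta_n,\theta_{\overline{n}})=p_n$ and $|\theta_n-\theta_{\overline{n}}|=\mu_n$; the coefficient of $x^{\mu}$ is then an explicit terminating $(n-1)$-fold $q$-series in $p_1,\dots,p_{n-1}$ (with $p_n=N-p_1-\cdots-p_{n-1}$). The product $\prod_{1\leq l\leq n-1}$ in (\ref{C-ext}), together with the $q$-binomials, has a nested (``shell by shell'') structure in which each index $p_l$ controls the single pair of letters $l,\overline{l}$; the key computational step is to sum out these indices one at a time, starting from $p_1$ (the variable attached to $x_1$) and working inward. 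At each stage the relevant evaluation is a transformation or summation for a terminating very-well-poised balanced series, and at the innermost, $T$-carrying level it is precisely a transformation for a very-well-poised balanced ${}_{12}W_{11}$ series (which degenerates to an ${}_8W_7$ when some of the parameters $T,t^{j}$ coincide). Carrying this out converts the multiple sum into the product appearing in Lassalle's coefficient, completing the identification; this iteration of partial summations is essentially an induction on $n$, with inductive core the rank-one case, in which $\widetilde{P}_{(r)}^{(C_1)}(x;q,t,T)$ must be checked against the relevant Askey--Wilson-type polynomial by a single, classical, terminating $q$-series identity.

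As a built-in consistency check, specializing $T=t^2/q$ makes one numerator factor of the ${}_{12}W_{11}$ cancel a denominator factor, the innermost series collapses, and (\ref{C-ext}) reduces term by term to (\ref{MacOfCt^2/q}), recovering the one-row formula for $P_{(r)}^{(C_n)}(x;q,t,t^2/q)$.

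I expect the main obstacle to be the bookkeeping of the nested $q$-series: one must track how the exponents $\theta_{l}+\cdots+\theta_{\overline{l+1}}$ and $\theta_{l+1}+\cdots+\theta_{\overline{l+1}}$ shift after each partial summation, verify that the very-well-poised and balancing conditions hold so that the appropriate ${}_{12}W_{11}$ transformation applies, and handle carefully the ``folding'' that has already merged the letters $n,\overline{n}$ --- replacing $(t;q)_{\theta_n}(t;q)_{\theta_{\overline{n}}}$ by $(t;q)_{|\theta_n-\theta_{\overline{n}}|}$ and absorbing the would-be $l=n$ contribution into the factor carrying $\theta=\min(\theta_n,\theta_{\overline{n}})$ and $T$. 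Once the correct ${}_{12}W_{11}$ transformation has been pinned down and its hypotheses verified, the remaining comparison with Lassalle's product is routine.
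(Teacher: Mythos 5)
Your overall scaffolding (reduce to Lassalle's expansion between $P_{(r)}^{(C_n)}$ and the generating polynomials $G_r$, extract the coefficient of each monomial $x^{\mu}$ via the substitution $\theta_k=m_k+\phi_k$, $\theta_{\overline{k}}=\phi_k$, and evaluate the resulting nested terminating $q$-series by very-well-poised transformations with an induction on $n$) is the same as the paper's, but there is a genuine gap at the one place where this theorem differs from the $T=t^2/q$ case: you never explain how the parameter $T$ is eliminated. The paper does \emph{not} use a $T$-carrying ${}_{12}W_{11}$ transformation at the innermost level, and no such transformation is established anywhere in the paper. Instead, it verifies the \emph{forward} system (\ref{G_to_Psi_C}), $G_r=\sum_i c_i(T)\,\Psi_{(r-2i)}^{(C_n)}$, and observes that in the coefficient of $x_1^{m_1}\cdots x_n^{m_n}$ the two innermost summation indices --- $\theta_{\overline{n}}$ and the Lassalle index $i$ --- combine into a single terminating ${}_{6}W_{5}$ series with argument $q/t$, which is summed in closed form by the $q$-Dougall formula \cite[equation~(2.4.2)]{GR}; the $T$-dependent factors then cancel identically, and what remains is exactly the left-hand side of the $T$-free transformation formula (\ref{sono6}) already proved for the type-$D$ case (Theorem~\ref{N=ippan}). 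That ${}_{6}W_{5}$ summation and cancellation is the essential new idea of this proof, and your proposal replaces it with an unspecified ``innermost, $T$-carrying ${}_{12}W_{11}$ transformation'' whose existence and hypotheses you would have to establish from scratch.

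Two further points. First, your claim that summing out $p_1,\dots,p_{n-1}$ ``converts the multiple sum into the product appearing in Lassalle's coefficient'' is not meaningful: the coefficient of $x^{\mu}$ in $P_{(r)}^{(C_n)}(x;q,t,T)$ is not a product (Lassalle's formula expresses $P_{(r)}$ as a sum over $i$ of $G_{r-2i}$'s, each itself a multiple sum), so the correct target of the computation is the coefficient of $x^{\mu}$ in $G_r$, namely the right-hand side of (\ref{sono6}), reached through Theorem~\ref{N=ippan}. Second, working with the inverse expansion $P_{(r)}=\sum_i d_i(T)G_{r-2i}$ rather than the forward one is legitimate in principle, but then your induction bottoms out in a rank-one identity against an Askey--Wilson-type polynomial that you have not identified; the paper avoids this entirely because, once $T$ has been removed by the ${}_{6}W_{5}$ summation, the remaining identity is literally the one already proved in Section~3. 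Your consistency check at $T=t^2/q$ is fine but does not bear on the general-$T$ mechanism.
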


\begin{rem}
At present,
we do not know any $\mathcal{W}$ algebra which explains  the
formula (\ref{C-ext}).
\end{rem}

There are several combinatorial expressions for the Macdonald polynomials
studied from some dif\/ferent points of view.
See \cite{Le,RY, DE} for example. It would be an intriguing problem to f\/ind possible connections between those formulas and ours
obtained in this paper.

This paper is organized as follows. In Sections~\ref{section2}, \ref{section3} and~\ref{section5} we construct the
transformation formulas for the basic hypergeometric series for proving
our tableau
formulas.
In Sections~\ref{section4} and~\ref{section6}, we prove the tableau formulas for the one-row Macdonald polynomials of
types $C_n$ and $D_n$ respectively.
In Section~\ref{section7},
we recall the deformed~$\mathcal{W}$ algebras of types~$C_n$
and~$D_n$, and then prove that the correlation functions with principal specialization
give us the  tableau formulas for the
Macdonald polynomials
of types~$C_n$ and~$D_n$ in the one-row case respectively.
In Appendix~\ref{appendixA},
we recall brief\/ly the Koornwinder polynomials, and the Macdonald polynomials
of types~$C_n$ and~$D_n$
as degenerations of the Koornwinder polynomials.

Throughout this paper,
we use the standard notation for the basic hypergeometric series
as
\begin{gather*}
{}_{r+1}\phi_r\left[ {a_1,a_2,\ldots,a_{r+1}\atop b_1,\dots,b_{r}};q,z\right]=
\sum_{n=0}^\infty {(a_1, a_2, \dots, a_{r+1};q)_n \over
(q, b_1, b_2, \dots, b_{r};q)_n} z^n,\\
{}_{r+1}W_r(a_1;a_4,a_5,\ldots,a_{r+1};q,z) =
{}_{r+1}\phi_r\left[ {a_1,q a_1^{1/2}, -q a_1^{1/2},a_4,\ldots,a_{r+1}\atop
a_1^{1/2},-a_1^{1/2},q a_1/a_4,\ldots,q a_1/a_{r+1}};q,z\right].
\end{gather*}
We call the ${}_{r+1}W_r$ series {\it very well-poised} basic hypergeometric series.
Moreover,
we call a~${}_{r+1}W_r$ series
{\it very well-poised balanced}
when it satisf\/ies the balancing condition
$(a_4a_5\cdots a_{r+1})z = \big({\pm} (a_1q)^{1 \over 2}\big)^{r-3}$.

\section{Transformation formula I}\label{section2}

In this section, we give a transformation formula of basic hypergeometric
series. We show that
a very well-poised balanced ${}_{12}{W}_{11}$ series is transformed to
a ${}_{4}{\phi}_{3}$ series which is
neither balanced
nor well-poised.
Recall the following proposition:
\begin{prp}[\protect{\cite[Proposition 7.3]{NS}}]
We have for $r, \theta \in \mathbb{Z}_{\geq 0}$
\begin{gather}
{}_{r+9}W_{r+8}\biggl(a;q^{-\theta},q^{\theta}af,a_1,\dots,a_r,
\biggl(\frac{aq}{f}\biggr)^{{1 \over 2}},
-\biggl(\frac{aq}{f}\biggr)^{{1 \over 2}}, \biggl(\frac{aq^2}{f}\biggr)^{{1 \over 2}},
-\biggl(\frac{aq^2}{f}\biggr)^{{1 \over 2}};q,z \biggr)\label{q} \\
\quad{}=
{(aq,f^2/q;q)_{\theta} \over(af,f;q)_{\theta} }
 \sum_{m\geq 0}
{(q/f,q^{-\theta},aq/f;q)_m \over (q,q^{-\theta}q^2/f^2,aq;q)_m}
q^m\,
{}_{r+5}W_{r+4}
\big(a;q^{-m},q^{m}aq/f,a_1,\dots,a_r;q,z\big).\nonumber
\end{gather}
\end{prp}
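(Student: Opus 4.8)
The plan is to exploit the very special choice of the four parameters $\pm(aq/f)^{1/2}$, $\pm(aq^2/f)^{1/2}$ in order to collapse the ${}_{r+9}W_{r+8}$ to something in which the remaining parameters $a_1,\dots,a_r$ and $z$ play no role. First note that \eqref{q} is an identity between \emph{finite} sums: the left side terminates because of $q^{-\theta}$, and on the right the $m$-sum terminates at $m=\theta$ while each ${}_{r+5}W_{r+4}(a;q^{-m},\dots)$ terminates at $n=m$; so there is no analytic issue and all the rearrangements below are legitimate. Writing the left-hand ${}_{r+9}W_{r+8}$ out as a series and using $(x;q)_k(-x;q)_k=(x^2;q^2)_k$ together with $(y;q^2)_k(yq;q^2)_k=(y;q)_{2k}$ to fold the four root parameters, the left side of \eqref{q} becomes
\[
\sum_{n\ge 0}\frac{(a;q)_n(1-aq^{2n})}{(q;q)_n(1-a)}\,
\frac{(a_1,\dots,a_r;q)_n}{(aq/a_1,\dots,aq/a_r;q)_n}\,z^n
\cdot\frac{(aq/f;q)_{2n}}{(af;q)_{2n}}\cdot
\frac{(q^{-\theta},q^{\theta}af;q)_n}{(aq^{\theta+1},q^{1-\theta}/f;q)_n};
\]
in particular it is independent of the choice of square roots. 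On the right side I expand each ${}_{r+5}W_{r+4}(a;q^{-m},q^maq/f,a_1,\dots,a_r;q,z)$ as a series in $n$ and interchange the two finite sums. The factors $\tfrac{(a;q)_n(1-aq^{2n})}{(q;q)_n(1-a)}$, $\tfrac{(a_1,\dots,a_r;q)_n}{(aq/a_1,\dots,aq/a_r;q)_n}$ and $z^n$ are then common to both sides, so the whole proposition reduces to the single scalar identity
\[
\frac{(aq/f;q)_{2n}}{(af;q)_{2n}}\cdot\frac{(q^{-\theta},q^{\theta}af;q)_n}{(aq^{\theta+1},q^{1-\theta}/f;q)_n}
=\frac{(aq,f^2/q;q)_\theta}{(af,f;q)_\theta}
\sum_{m\ge 0}\frac{(q/f,q^{-\theta},aq/f;q)_m}{(q,q^{2-\theta}/f^2,aq;q)_m}\,q^m\,
\frac{(q^{-m},q^maq/f;q)_n}{(aq^{m+1},fq^{-m};q)_n},
\]
which involves neither $a_1,\dots,a_r$ nor $z$ (for $\theta<n$ both sides vanish, so one may assume $\theta\ge n$).

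To prove this scalar identity I would substitute into the $m$-sum the elementary relations $(aq^{m+1};q)_n=(aq;q)_n(aq^{n+1};q)_m/(aq;q)_m$, $(q^maq/f;q)_n=(aq/f;q)_n(aq^{n+1}/f;q)_m/(aq/f;q)_m$, $(q^{-m};q)_n=(-1)^nq^{\binom{n}{2}-mn}(q;q)_m/(q;q)_{m-n}$ and $(fq^{-m};q)_n=(-f)^nq^{\binom{n}{2}-mn}(q/f;q)_m/(q/f;q)_{m-n}$. All powers of $q$ and of $-1$ cancel, most of the factorials in $m$ cancel, and after setting $m=n+l$ the sum over $l$ collapses to the terminating series
\[
{}_3\phi_2\!\left[{q^{\,n-\theta},\ aq^{2n+1}/f,\ q/f\atop q^{\,n+2-\theta}/f^2,\ aq^{2n+1}};\;q,\;q\right],
\]
which is balanced, since $q^{\,n-\theta}\cdot(aq^{2n+1}/f)\cdot(q/f)\cdot q=(q^{\,n+2-\theta}/f^2)\cdot aq^{2n+1}$. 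The $q$-Pfaff--Saalschütz summation then evaluates it in closed form as $(f,aq^{2n}f;q)_{\theta-n}/(aq^{2n+1},f^2/q;q)_{\theta-n}$, giving an explicit product for the whole $m$-sum.

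It remains to check that this product, multiplied by $(aq,f^2/q;q)_\theta/(af,f;q)_\theta$, equals the left side of the scalar identity. This is a purely mechanical verification using the telescoping rules $(x;q)_\theta=(x;q)_n(xq^n;q)_{\theta-n}$, $(x;q)_{2n}=(x;q)_n(xq^n;q)_n$ and the reflection $(x;q)_n=(q^{1-n}/x;q)_n(-x)^nq^{\binom{n}{2}}$; I expect this bookkeeping, rather than any conceptual point, to be the only real labor in the argument (a convenient sanity check is $n=0$, where both sides of the scalar identity equal $1$). One small point of care in applying $q$-Pfaff--Saalschütz is to keep track of which of the two lower parameters $q^{\,n+2-\theta}/f^2$ and $aq^{2n+1}$ plays the role of ``$c$'' in its standard normalization (it is $aq^{2n+1}$), and, in the very first step, to verify that the fold of the four root parameters produces exactly $(aq/f;q)_{2n}/(af;q)_{2n}$.
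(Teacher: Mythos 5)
The paper offers no proof of this Proposition at all: it is imported verbatim from \cite[Proposition~7.3]{NS}, so there is no internal argument to compare yours against. Your blind proof is, however, correct and self-contained, and I checked its pivotal steps. The fold of the quartet $\pm(aq/f)^{1/2}$, $\pm(aq^2/f)^{1/2}$ does produce exactly $(aq/f;q)_{2n}/(af;q)_{2n}$ (the denominators $aq/(\pm(aq/f)^{1/2})=\pm(aqf)^{1/2}$ and $aq/(\pm(aq^2/f)^{1/2})=\pm(af)^{1/2}$ combine to $(afq;q^2)_n(af;q^2)_n=(af;q)_{2n}$). Since both sides are finite sums sharing the common factor $\frac{(a;q)_n(1-aq^{2n})}{(q;q)_n(1-a)}\prod_i\frac{(a_i;q)_n}{(aq/a_i;q)_n}z^n$, it indeed suffices to prove your scalar identity for each $n\le\theta$, and the four elementary rewritings you list do collapse the shifted $m$-sum (with $m=n+l$) to the stated terminating balanced ${}_3\phi_2$; the $q$-Pfaff--Saalsch\"utz evaluation with $C=aq^{2n+1}$ gives $(f,aq^{2n}f;q)_{\theta-n}/(aq^{2n+1},f^2/q;q)_{\theta-n}$, and the remaining bookkeeping closes via $(aq;q)_{2n}(aq^{2n+1};q)_{\theta-n}(aq^{\theta+1};q)_n=(aq;q)_{\theta}(aq^{\theta+1};q)_n\cdots$, $(af;q)_{2n}(aq^{2n}f;q)_{\theta-n}=(af;q)_\theta(afq^{\theta};q)_n$, and the reflection $(x;q)_n=(q^{1-n}/x;q)_n(-x)^nq^{\binom{n}{2}}$, which yields precisely $1/(q^{1-\theta}/f;q)_n$ from the leftover $f$-factors. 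What your route buys is a short, purely elementary derivation of a result the paper treats as a black box; the cost is that it proves only this instance rather than the more general machinery developed in \cite{NS}.
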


The main result in this section is as follows:

{\samepage

\begin{thm}\label{koushiki-2} Assume $af =  a_2 a_3$, then
\begin{gather}
{}_{12}W_{11}\biggl( a ; q^{-\theta}, q^{\theta}af, f, a_2, a_3, \biggl(\frac{aq}{f}\biggr)^{{1 \over 2}},
-\biggl(\frac{aq}{f}\biggr)^{{1 \over 2}}, \biggl(\frac{aq^2}{f}\biggr)^{{1 \over 2}},
-\biggl(\frac{aq^2}{f}\biggr)^{{1 \over 2}} ; q, q/f \biggr) \nonumber \\
\qquad {}=
{ (aq, af/a_2 ;q)_{\theta} \over (af, aq/a_2; q)_{\theta} }
{}_{4}{\phi}_{3}\biggl[
{{ q^{-{\theta}}, q^{-{\theta}}a_2/a, f, a_2}\atop{ q^{-{\theta+1}}a_2/af,
q^{-{\theta+1}}/f, aq/a_3 }} ;q, q^2/f^2 \biggr]. \label{sono1}
\end{gather}
\end{thm}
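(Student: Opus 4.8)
The plan is to specialize the general reduction formula (\ref{q}) from the preceding Proposition and then recognize the resulting sum as a ${}_4\phi_3$. Concretely, in (\ref{q}) I would take $r=2$ and set $a_1=f$, and keep $a_2,a_3$ as the two remaining extra parameters, so that the left-hand side of (\ref{q}) becomes exactly the ${}_{12}W_{11}$ on the left of (\ref{sono1}) once we also put $z=q/f$. The point of imposing the hypothesis $af=a_2a_3$ is that it makes this ${}_{12}W_{11}$ very well-poised balanced, which is precisely the regime in which the inner ${}_{r+5}W_{r+4}={}_{7}W_{6}$ appearing on the right of (\ref{q}) collapses; I would use the terminating very-well-poised ${}_{7}W_{6}$ summation (a $q$-analogue of Dixon's/Jackson's theorem, i.e.\ the $q$-Pfaff--Saalsch\"utz-type evaluation of a balanced terminating ${}_{7}W_6$, as in Gasper--Rahman) to evaluate
\[
{}_{7}W_{6}\big(a;q^{-m},q^{m}aq/f,f,a_2,a_3;q,q/f\big)
\]
in closed product form, using $af=a_2a_3$ to check the balancing.

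After that evaluation, the right-hand side of (\ref{q}) becomes a single sum over $m$ of a product of $q$-shifted factorials times $q^m$; the prefactor $(aq,f^2/q;q)_\theta/(af,f;q)_\theta$ combines with the $\theta$-dependent pieces coming out of the ${}_7W_6$ sum. The next step is bookkeeping: I would rewrite every $(\cdot;q)_m$ and the powers of $q$ so that the summand is manifestly of the form $\dfrac{(q^{-\theta},\,q^{-\theta}a_2/a,\,f,\,a_2;q)_m}{(q,\,q^{-\theta+1}a_2/af,\,q^{-\theta+1}/f,\,aq/a_3;q)_m}\,(q^2/f^2)^m$, matching the claimed ${}_4\phi_3$, and separately collect the $m$-independent factor into $(aq,af/a_2;q)_\theta/(af,aq/a_2;q)_\theta$. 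Identities like $(q^{-\theta}q^2/f^2;q)_m$ versus $(q^{-\theta+1}/f;q)_m(\cdots)$ will need the elementary relations $(x;q)_m=(-x)^m q^{\binom m2}(q^{1-m}/x;q)_m$ and $(q^{-\theta};q)_m (q^{-\theta+m};q)_{\theta-m}=(q^{-\theta};q)_\theta$ to reindex; this is routine but fiddly.

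The main obstacle is exactly this reconciliation step: matching the argument $q/f$ of the input series and the residual $q^m$ weight with the target argument $q^2/f^2$, and verifying that the numerator/denominator parameters of the inner ${}_7W_6$, after the summation, reorganize into the four upper and three lower parameters of (\ref{sono1}) with no leftover factors. In particular one must be careful that the ${}_7W_6$ summation formula one invokes has its own balancing condition met here (which is where $af=a_2a_3$ is used a second time, beyond making the ${}_{12}W_{11}$ balanced), and that the special numerator entry $q^m aq/f$ produced by (\ref{q}) pairs correctly with $f$ to give the well-poised structure. Once the summand is brought to the stated form, convergence/termination is automatic since $q^{-\theta}$ appears, and the theorem follows; I would finish by remarking that both sides are rational in the parameters, so it suffices to verify the identity for generic values where all these manipulations are valid.
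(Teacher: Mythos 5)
Your overall strategy—feed the ${}_{12}W_{11}$ into the reduction formula (\ref{q}) and then massage the resulting sum into the target ${}_4\phi_3$—is the same first move as the paper (modulo an off-by-one: you need $r=3$ with $a_1=f$, so the inner series on the right of (\ref{q}) is an ${}_{8}W_{7}$, namely ${}_{8}W_{7}(a;q^{-m},aq^{m+1}/f,f,a_2,a_3;q,q/f)$, not a ${}_7W_6$). The genuine gap is your key step: this inner series is \emph{not} summable in closed product form. Jackson's terminating very-well-poised ${}_8\phi_7$ summation requires the argument to be $q$ (equivalently $bcde=a^2q^{m+1}$), whereas here the argument is $q/f$ and, with $b=aq^{m+1}/f$, $c=f$, $d=a_2$, $e=a_3$, the hypothesis $af=a_2a_3$ gives $bcde=a^2fq^{m+1}$ — off by a factor of $f$. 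No parameter cancellation reduces it to a ${}_6W_5$ either. So the ``collapse'' you rely on does not occur, and the right-hand side of (\ref{q}) does not become a single sum over $m$.

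What the paper does instead is apply Watson's transformation to convert each inner ${}_{8}W_{7}$ into a terminating balanced ${}_4\phi_3$ (a sum, not a product), producing a double sum over $m$ and $j$; it then sets $s=m-j$, interchanges the order of summation, and evaluates the resulting inner sum over $s$ by the $q$-Saalsch\"utz formula — and it is precisely at the Watson and Saalsch\"utz stages that $af=a_2a_3$ is used. These two extra transformations (and the reindexing between them) are the substance of the proof and are missing from your outline; without them the argument does not go through.
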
}

\begin{rem} By one of the anonymous referees,
it was pointed out that~(\ref{sono1}) is  a~special case of the
transformation formula obtained by Langer, Schlosser and Warnaar~\cite[equation~(4.2)]{LSW}. Namely,
we have~(\ref{sono1}) by letting $d\rightarrow 0$ (or~$d\rightarrow \infty$) in the $p=0$ case of
\cite[equation~(4.2)]{LSW}. The authors thank the referee for informing them of this fact.
\end{rem}

\begin{rem}
The ${}_{4}{\phi}_{3}$ series in the right hand side of (\ref{sono1}) is neither balanced
nor well-poised. However it has the following structure:
set for simplicity $u_1 := q^{-\theta}$, $u_2 := q^{-\theta}a_2/a$,
$u_3 := f$, $u_4 := a_2$, $v_1 := q$, $v_2 := q^{-\theta+1}a_2/af$,
$v_3 := q^{-\theta+1}/f$ and $v_4 := aq/a_3$. Then we have
\begin{itemize}\itemsep=0pt
\item[(i)] $u_1 v_1 = u_3 v_3 = q^{-\theta +1}$ and
$u_2 v_4 = u_4 v_2=q^{-\theta +1} a_2/a_3$,
\item[(ii)] $u_1 v_4 = u_4 v_3 = a q^{-\theta +1}/a_3$ and
$u_2 v_1 = u_3 v_2 =  q^{-\theta +1} a_2/a$.
\end{itemize}
\end{rem}

\begin{proof}[Proof of Theorem \ref{koushiki-2}]
Applying (\ref{q}) to the left hand side of (\ref{sono1}), we have
\begin{gather}
\mbox{l.h.s.\ of (\ref{sono1})}=
{(aq, f^2/q;q)_{\theta} \over (af, f;q)_{\theta}}\nonumber\\
\hphantom{\mbox{l.h.s.\ of (\ref{sono1})}=}{}\times
\sum_{m \geq 0}{(q/f, q^{-\theta}, aq/f;q)_m \over
(q, q^{-\theta+2}/f^2, aq;q)_m} q^m\,
{}_{8}W_{7}\big( a ; q^{-m}, aq^{m+1}/f, f, a_2,
a_3 ; q, q/f \big).\!\!\! \label{sono2}
\end{gather}
We apply Watson's formula \cite[p.~35, equation~(2.5.1)]{GR}
\begin{gather}
 {}_{8}{\phi}_{7}\biggl[{ { a,qa^{1 \over 2}, -qa^{1 \over 2}, b, c, d, e, q^{-n} }\atop
{ a^{1 \over 2}, -a^{1 \over 2}, aq/b, aq/c, aq/d, aq/e, aq^{n+1} }} ; q, {a^2q^{2+n} \over
bcde} \biggr] \nonumber \\
\qquad {} =  { (aq, aq/{de} ;q)_n \over (aq/d, aq/e; q)_n }
{}_{4}{\phi}_{3}\biggl[
{{ q^{-n}, d, e, aq/{bc}}\atop{ aq/b, aq/c, deq^{-n}/a}} ;q, q \biggr],\label{Watson}
\end{gather}
to the right hand side of (\ref{sono2}) with the substitutions
$b = aq^{m+1}/f$, $c = a_3$, $d = a_2$ and $e = f$. Then we have
\begin{gather}
 \mbox{r.h.s.\ of (\ref{sono2})}
 =
{(aq, f^2/q;q)_{\theta} \over (af, f;q)_{\theta}}
\sum_{m \geq 0}{(q^{-\theta}, aq/a_2 f, q/f ; q)_m \over
(q^{-\theta+2}/f^2, aq/a_2, q ; q)_m} q^m\nonumber\\
\hphantom{\mbox{r.h.s.\ of (\ref{sono2})}=}{}
\times
\sum_{j=0}^{m}{(f, a_2, q^{-m}f/a_3, q^{-m};q)_{j} \over (aq/a_3, q^{-m}a_2f/a, q^{-m}f, q;q)_{j}} q^j. \label{sono2.75}
\end{gather}
Now we need to change the order of the summation.
Setting $s := m - j$ and using the condition $af = a_2a_3$, we have
\begin{gather}
 \mbox{r.h.s.\ of (\ref{sono2.75})}
 = {(aq, f^2/q;q)_{\theta} \over (af, f;q)_{\theta}} \sum_{j=0}^{\theta}
{(q^{-\theta}, f, a_2;q)_{j} \over (q^{-\theta + 2}/f^2, aq/a_3, q;q)_{j}} (q/f)^{2j}\nonumber\\
\hphantom{\mbox{r.h.s.\ of (\ref{sono2.75})}=}{}
\times
\sum_{s=0}^{\theta-j}
{(aq/a_2f, q/f, q^{-\theta + j};q)_{s} \over (aq/a_2, q, q^{-\theta + j + 2}/f^2;q)_{s}} q^s. \label{sono3}
\end{gather}
As a f\/inal step,
we apply the $q$-Saalsch\"utz transformation formula \cite[p.~13, equa\-tion~(1.7.2)]{GR} to the
 summation with respect to~$s$
of~(\ref{sono3}). Then we have
\begin{gather*}
\mbox{r.h.s.\ of (\ref{sono3})}=
{(aq, q;q)_{\theta} \over (af, f;q)_{\theta}}\sum_{j=0}^{\theta}
{(af/a_2, f;q)_{\theta - j} \over (aq/a_2, q;q)_{\theta - j}}{(f, a_2;q)_j \over (q, aq/a_3;q)_j}
\nonumber\\
\hphantom{\mbox{r.h.s.\ of (\ref{sono3})}}{}
=
{ (aq, af/a_2 ;q)_{\theta} \over (af, aq/a_2; q)_{\theta} }
{}_{4}{\phi}_{3}\biggl[
{{ q^{-{\theta}}, q^{-{\theta}}a_2/a, f, a_2}\atop{ q^{-{\theta+1}}a_2/af,
q^{-{\theta+1}}/f, aq/a_3 }} ;q, q^2/f^2 \biggr].
\end{gather*}
This completes the proof of Theorem~\ref{koushiki-2}.
\end{proof}

In what follows, we use  Theorem~\ref{koushiki-2} in  the form
\begin{gather}
 {}_{12}W_{11}
\biggl( a ; q^{-\theta}, q^{\theta}af, f, a_2, a_3, \biggl(\frac{aq}{f}\biggr)^{{1 \over 2}},
-\biggl(\frac{aq}{f}\biggr)^{{1 \over 2}}, \biggl(\frac{aq^2}{f}\biggr)^{{1 \over 2}},
-\biggl(\frac{aq^2}{f}\biggr)^{{1 \over 2}} ; q, q/f \biggr) \nonumber \\
\qquad{} =
{(aq, q;q)_{\theta} \over (af, f;q)_{\theta}}\sum_{j=0}^{\theta}
{(af/a_2, f;q)_{\theta - j} \over (aq/a_2, q;q)_{\theta - j}}{(f, a_2;q)_j \over (q, aq/a_3;q)_j}.\label{hitotsumaenoprp}
\end{gather}

\section{Transformation formula II}\label{section3}

In this section, we present a transformation formula
which will be used
to describe the Macdonald polynomials of types
$D_n$ and $C_n$.

\begin{thm} \label{N=ippan}
Let $n \in \mathbb{Z}_{\geq 2}$. Fix $K,  m_1, m_2, \dots, m_n \in \mathbb{Z}_{\geq 0}$ arbitrarily.
Set $ m_{l,n}:=\sum\limits_{k=l}^{n} m_k $, $ \phi_{l,n}:=\sum\limits_{k=l}^{n} \phi_k $ for simplicity of display. We have
\begin{gather}
\sum_{\phi_1 ,\phi_2 , \dots , \phi_{n-1} , i\geq 0\atop
\phi_1 + \phi_2 + \cdots + \phi_{n-1} + i = K}
\Biggl(
\prod_{1 \leq l \leq n-1}
{(t;q)_{\phi_l} (t;q)_{\phi_l + m_l}
\over (q;q)_{\phi_l} (q;q)_{\phi_l +m_l}}  \nonumber\\
\qquad\quad{}\times
\frac{
(t^{n-l-1} q^{\phi_l + 2\phi_{l+1,n-1} + m_{l,n}+1 } ;q)_{\phi_l}
(t^{n-l} q^{2\phi_{l+1,n-1} + m_{l+1,n}} ;q)_{\phi_l}}
{
(t^{n-l} q^{\phi_l + 2\phi_{l+1,n-1} + m_{l,n} } ;q)_{\phi_l}
(t^{n-l-1} q^{2\phi_{l+1,n-1} + m_{l+1,n} + 1} ;q)_{\phi_l}}
\Biggr)\nonumber\\
 \qquad\quad{}  \times
{ (t;q)_{m_n}
\over (q;q)_{m_n}}
\frac{(t;q)_{i}(t^n q^{2K+m_{1,n}-2i};q )_i}
{(q;q)_{i}(t^{n-1} q^{2K+m_{1,n}-2i +1};q)_i}
\nonumber \\
\qquad{}= \sum_{\phi_1 ,\phi_2 , \dots , \phi_{n-1} , \phi_{n}\geq 0\atop \phi_1 + \phi_2 + \cdots + \phi_{n} = K}
\prod_{1 \leq j \leq n}{(t;q)_{\phi_j} (t;q)_{\phi_j + m_j}
\over (q;q)_{\phi_j} (q;q)_{\phi_j +m_j}}.
\label{sono6}
\end{gather}
\end{thm}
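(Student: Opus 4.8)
The plan is to prove the identity (\ref{sono6}) by induction on $n$, peeling off the summation variable $\phi_1$ using the $_{12}W_{11}$ transformation of Theorem~\ref{koushiki-2} in the form (\ref{hitotsumaenoprp}). The right-hand side of (\ref{sono6}) is manifestly symmetric in a way that suggests an inductive structure: if we fix $\phi_1 = p$ and sum over $\phi_2 + \cdots + \phi_n = K - p$, the inner sum is exactly the right-hand side of (\ref{sono6}) with $n$ replaced by $n-1$, $K$ replaced by $K-p$, and the parameters $m_2, \dots, m_n$ retained (with $m_1$ dropped). So by the induction hypothesis we may replace that inner sum by the left-hand side of (\ref{sono6}) at level $n-1$; the remaining task is to sum over $p = \phi_1$ the product of the $\phi_1$-prefactor $\frac{(t;q)_p (t;q)_{p+m_1}}{(q;q)_p(q;q)_{p+m_1}}$ against the level-$(n-1)$ left-hand side, and to recognize the result as the level-$n$ left-hand side.

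First I would set up the base case $n = 2$ carefully: here (\ref{sono6}) becomes a single-variable identity in $\phi_1 = i' $ (with $\phi_2$ or $i$ determined), and I expect it to follow directly from (\ref{hitotsumaenoprp}) after identifying the parameters $a, f, a_2, a_3$ in terms of $q, t, K, m_1, m_2$ — roughly $f = t$, $a = t^{n-1}q^{\,2K + m_{1,n} - \cdots}$, with $a_2, a_3$ chosen so that $af = a_2 a_3$ and $a_2/a_3$ matches the $t$-power discrepancy between the two factors $t^{n-l-1}q^{\cdots+1}$ and $t^{n-l}q^{\cdots}$ appearing in the $l$-th factor. The very-well-poised argument of the $_{12}W_{11}$, namely $q/f = q/t$, must match the $q^j(q/f)^{2j}$-type weighting that emerges; the two "structure" relations (i) and (ii) in the Remark after Theorem~\ref{koushiki-2} are the bookkeeping that guarantees this matching is consistent.

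For the inductive step the key manipulation is to rewrite the sum over $\phi_1$ of (prefactor) $\times$ (level-$(n-1)$ l.h.s.) as a $_{12}W_{11}$ series and then contract it via (\ref{hitotsumaenoprp}). Concretely, after substituting the induction hypothesis the inner $j$-summation from the level-$(n-1)$ l.h.s. and the $\phi_1$-summation combine; exchanging orders and applying (\ref{hitotsumaenoprp}) with $a \mapsto t^{n-1}q^{\,2K + m_{1,n}}$-type argument (shifted appropriately by the $m_k$'s and by $\phi_1$) should collapse one layer and reproduce exactly the $l = 1$ factor of the level-$n$ l.h.s., i.e.\ $\frac{(t^{n-2}q^{\phi_1 + 2\phi_{2,n-1}+m_{1,n}+1};q)_{\phi_1}(t^{n-1}q^{2\phi_{2,n-1}+m_{2,n}};q)_{\phi_1}}{(t^{n-1}q^{\phi_1+2\phi_{2,n-1}+m_{1,n}};q)_{\phi_1}(t^{n-2}q^{2\phi_{2,n-1}+m_{2,n}+1};q)_{\phi_1}}$ together with the prefactor $\frac{(t;q)_{\phi_1}(t;q)_{\phi_1+m_1}}{(q;q)_{\phi_1}(q;q)_{\phi_1+m_1}}$. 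The main obstacle I anticipate is precisely this parameter-tracking: getting the exponents of $q$ — which involve the running partial sums $\phi_{l,n}$ and $m_{l,n}$ shifted by the freshly-summed variable — to align on the nose between what (\ref{hitotsumaenoprp}) produces and what the level-$n$ l.h.s.\ requires, including the $i$-dependent factor $\frac{(t;q)_i(t^nq^{2K+m_{1,n}-2i};q)_i}{(q;q)_i(t^{n-1}q^{2K+m_{1,n}-2i+1};q)_i}$ which must survive the contraction unchanged. A secondary subtlety is justifying the interchange of the finite summations (all series terminate since $\theta$, the $\phi_l$'s, and $K$ are nonnegative integers, so convergence is not an issue, but the reindexing $s := m - j$ must be done with care as in the proof of Theorem~\ref{koushiki-2}). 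Once the $l=1$ factor is extracted, what remains under the sum over $\phi_2, \dots, \phi_n$ is visibly the level-$(n-1)$ l.h.s.\ with the appropriate parameter shift, and running the induction hypothesis backwards (or simply noting we have now matched term-by-term) closes the argument.
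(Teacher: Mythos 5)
Your proposal is correct and follows essentially the same route as the paper: induction on $n$, with the base case $n=2$ and the inductive step both handled by recognizing the $\phi_1$-summation (together with the $i$-dependent factor) as a very well-poised ${}_{12}W_{11}$ series and contracting it via (\ref{hitotsumaenoprp}). The only cosmetic difference is that you run the chain of equalities from the right-hand side toward the left-hand side (peeling off the $\phi_1$-prefactor first and invoking the induction hypothesis before the ${}_{12}W_{11}$ step), whereas the paper starts from the left-hand side; the key identity and the parameter identifications are the same.
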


We prove Theorem~\ref{N=ippan} by induction on~$n$.
In order to clarify  the structure of our proof, we f\/irst conf\/irm the case $n=2$
in Section~\ref{n=2}, and then  treat the general case in Section~\ref{general}.

\subsection[The case $n=2$]{The case $\boldsymbol{n=2}$}\label{n=2}

\begin{prp}\label{koushiki-3}
Fix $K,m_1, m_2 \in \mathbb{Z}_{\geq 0}$ arbitrarily.
We have
\begin{gather}
 \sum_{\phi_1,i\geq 0\atop \phi_1+i=K}
{(t;q)_{m_1+\phi_1} (t;q)_{m_2}
(t,q^{m_1+m_2+\phi_1+1},t q^{m_2};q)
_{\phi_1}
(t,t^2q^{2K+m_1+m_2-2i};q)_i
\over
(q;q)_{m_1+\phi_1}(q;q)_{m_2}
(q,t q^{m_1+m_2+\phi_1},
q^{m_2+1};q)_{\phi_1}
(q,tq^{2K+m_1+m_2-2i+1};q)_i
}
\nonumber\\
\qquad {}=
 \sum_{\phi_1,\phi_2\geq 0\atop \phi_1+\phi_2=K}
 {(t;q)_{m_1+\phi_1} (t;q)_{m_2+\phi_2}
 (t;q)_{\phi_1} (t;q)_{\phi_2}
\over
 (q;q)_{m_1+\phi_1} (q;q)_{m_2+\phi_2}
 (q;q)_{\phi_1} (q;q)_{\phi_2} }.\label{N=2}
\end{gather}
\end{prp}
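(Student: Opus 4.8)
The plan is to prove Proposition~\ref{koushiki-3} by reducing the left-hand side to the right-hand side via the transformation formula~(\ref{hitotsumaenoprp}) (equivalently Theorem~\ref{koushiki-2}) derived in the previous section. First I would rewrite the inner sum over $\phi_1$ (for each fixed $i$, or better, run the summation variable as $\phi_1$ with $i=K-\phi_1$) as a terminating very well-poised series. Concretely, the factor
\[
\frac{(t,q^{m_1+m_2+\phi_1+1},tq^{m_2};q)_{\phi_1}}{(q,tq^{m_1+m_2+\phi_1},q^{m_2+1};q)_{\phi_1}}
\]
together with the ratio $(t;q)_{m_1+\phi_1}/(q;q)_{m_1+\phi_1}$ and the trailing $i$-dependent factor
\[
\frac{(t,t^2q^{2K+m_1+m_2-2i};q)_i}{(q,tq^{2K+m_1+m_2-2i+1};q)_i}
\]
should be recognizable, after pulling out $\phi_1$-independent constants, as the summand of a ${}_{12}W_{11}$ with argument $q/f$. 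The hard bookkeeping step is to identify the parameters: I expect $a$, $f$, $a_2$, $a_3$ to be chosen so that $af=a_2a_3$, with $f$ essentially $t$, one of $a_2,a_3$ carrying $q^{m_1}$ and the other $q^{m_2}$, and $a$ of the form $t\cdot q^{2K+m_1+m_2}$ or similar, so that $q^{-\theta}$ with $\theta = K$ produces the truncation at $\phi_1+i=K$ and the pieces $q^\theta af$, $(aq/f)^{1/2}$, etc., reproduce exactly the $t^2 q^{2K+\cdots}$ and half-integer-power factors. Matching the exponents of $q$ and the powers of $t$ on both sides is the main obstacle — it is a careful but routine exercise in unwinding $q$-shifted factorial identities such as $(z;q)_{a+b} = (z;q)_a(zq^a;q)_b$ and reflection formulas $(q^{-n}z;q)_k = (-z)^k q^{\binom{k}{2}-nk}(q/z;q)_n/(q/z;q)_{n-k}$.

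Once the left-hand side of~(\ref{N=2}) is written as (constant prefactor)$\times {}_{12}W_{11}(\dots;q,q/f)$, I would apply~(\ref{hitotsumaenoprp}) to convert it into
\[
\frac{(aq,q;q)_\theta}{(af,f;q)_\theta}\sum_{j=0}^\theta \frac{(af/a_2,f;q)_{\theta-j}}{(aq/a_2,q;q)_{\theta-j}}\frac{(f,a_2;q)_j}{(q,aq/a_3;q)_j}.
\]
With $\theta=K$, the sum over $j$ runs $0\le j\le K$, and I would set $\phi_2:=j$, $\phi_1:=\theta-j=K-\phi_1'$ so that the constraint becomes $\phi_1+\phi_2=K$, matching the right-hand side of~(\ref{N=2}). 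Then I need to check that the product of the prefactor with the $j$th summand collapses, after reinserting the constants pulled out in the first step, exactly to
\[
\frac{(t;q)_{m_1+\phi_1}(t;q)_{m_2+\phi_2}(t;q)_{\phi_1}(t;q)_{\phi_2}}{(q;q)_{m_1+\phi_1}(q;q)_{m_2+\phi_2}(q;q)_{\phi_1}(q;q)_{\phi_2}}.
\]
This again reduces to repeated use of $(z;q)_{a+b}=(z;q)_a(zq^a;q)_b$ to recombine the $(af/a_2,f;q)_{\theta-j}$ and $(f,a_2;q)_j$ factors with the surviving prefactor into products of $(t;q)$ and $(q;q)$ symbols with shifted subscripts $m_1+\phi_1$, $m_2+\phi_2$, $\phi_1$, $\phi_2$.

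I expect the main obstacle to be purely organizational: correctly normalizing the ${}_{12}W_{11}$ so that its summation variable matches $\phi_1$ (the argument $q/f$ and the very-well-poised structure force specific signs and $q$-powers), and then verifying that the right-hand side after the transformation is genuinely symmetric in the way~(\ref{N=2}) demands. A useful sanity check along the way is to specialize $m_1=m_2=0$, where both sides become familiar single-variable $q$-series identities, and to track the degree in $t$ on both sides. No genuinely new idea is needed beyond Theorem~\ref{koushiki-2}; the content of Proposition~\ref{koushiki-3} is that the $n=2$ instance of the target identity~(\ref{sono6}) is exactly the specialization of~(\ref{hitotsumaenoprp}) obtained by the parameter dictionary above, and the proof is the verification of that dictionary.
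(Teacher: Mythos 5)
Your proposal follows exactly the paper's own proof: the authors write the left-hand side of~(\ref{N=2}) as a prefactor times the ${}_{12}W_{11}$ in~(\ref{sono5}) with the dictionary $a=tq^{m_1+m_2}$, $a_2=tq^{m_1}$, $a_3=tq^{m_2}$, $f=t$, $\theta=K$ (so $af=a_2a_3$ holds), apply~(\ref{hitotsumaenoprp}), and recombine the $q$-shifted factorials via $(z;q)_{a+b}=(z;q)_a(zq^a;q)_b$ to reach the right-hand side. The only slip in your parameter guess is that $a$ carries no $q^{2K}$ factor, but since you hedged on this and the rest of the dictionary and the final matching are as in the paper, your plan is correct and essentially identical to the published argument.
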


\begin{proof} One f\/inds that the summation in l.h.s.\ of~(\ref{N=2}) with respect to $\phi_1$
is given by the following~${}_{12}W_{11}$ series
\begin{gather}
\mbox{l.h.s.\ of (\ref{N=2})}
 =
{(t,t^2 q^{m_1 + m_2};q)_K (t;q)_{m_1} (t;q)_{m_2}\over
{(q, t q^{m_1 + m_2 +1};q)_K (q;q)_{m_1} (q;q)_{m_2}}} \nonumber \\
\qquad{}
\times
{}_{12}W_{11}\biggl(a ; q^{-K},q^K a f, f, b,
c, \biggl({aq\over f}\biggr)^{1\over 2},- \biggl({aq\over f}\biggr)^{1\over 2},
\biggl({aq^2 \over f}\biggr)^{1 \over 2},-\biggl({aq^2 \over f}\biggr)^{1 \over 2} ; q, q/f \biggr),\label{sono5}
\end{gather}
where
$a=tq^{m_1+m_2}$, $b=t q^{m_1}$, $c=t q^{m_2}$, $f=t$.
Then applying formula~(\ref{hitotsumaenoprp}), the r.h.s.\ of~(\ref{sono5}) is rewritten as
\begin{gather*}
 {(t;q)_{m_1} (t;q)_{m_2}\over
{(q;q)_{m_1} (q;q)_{m_2}}}
\sum_{\phi_1=0}^{K}
{(tq^{m_2}, t;q)_{K - \phi_1} \over (q^{m_2 +1}, q;q)_{K - \phi_1}}{(t, t q^{m_1};q)_{\phi_1} \over (q, q^{m_1 +1};q)_{\phi_1}} \\
 \qquad{}=
 \sum_{\phi_1=0}^{K}
 {(t;q)_{m_1+\phi_1} (t;q)_{m_2+K-\phi_1}
 (t;q)_{\phi_1} (t;q)_{K-\phi_1}
\over
 (q;q)_{m_1+\phi_1} (q;q)_{m_2+K-\phi_1}
 (q;q)_{\phi_1} (q;q)_{K-\phi_1} }.
\end{gather*}
This completes the proof of Proposition~\ref{koushiki-3}.
\end{proof}

\subsection{The general case}\label{general}

Assume the validity of the transformation formula~(\ref{sono6}) for $n-1$.
We have
\begin{gather}
\mbox{l.h.s.\ of (\ref{sono6})}
 =
\sum_{\phi_{n-1} = 0}^{K} \sum_{\phi_{n-2} = 0}^{K -\phi_{n-1}}
\cdots \sum_{\phi_{2} = 0}^{K -\phi_{3,n-1}}
\prod_{2 \leq l \leq n-1}{(t;q)_{\phi_{l}} (t;q)_{\phi_{l} + m_l}
\over (q;q)_{\phi_{l}} (q;q)_{\phi_{l} + m_l}} \nonumber\\
\hphantom{\mbox{l.h.s.\ of (\ref{sono6})}=}{}\times
\frac{
(t^{n-l-1} q^{\phi_l + 2\phi_{l+1,n-1} + m_{l,n} + 1} ;q)_{\phi_{l}}
(t^{n-l} q^{2\phi_{l+1,n-1} + m_{l+1,n}} ;q)_{\phi_l}
}
{
(t^{n-l} q^{\phi_l + 2\phi_{l+1,n-1} + m_{l,n}} ;q)_{\phi_l}
(t^{n-l-1} q^{2\phi_{l+1,n-1} + m_{l+1,n} + 1} ;q)_{\phi_l}
} \nonumber\\
\hphantom{\mbox{l.h.s.\ of (\ref{sono6})}=}{}\times
\sum_{\phi_1 = 0}^{K -\phi_{2,n-1}}
{(t;q)_{\phi_1} (t;q)_{\phi_1 + m_1} (t;q)_{m_n}
(t,t^n q^{2\phi_{1,n-1} + m_{1,n}};q )_{K -\phi_{1,n-1}} \over
(q;q)_{\phi_1} (q;q)_{\phi_1 + m_1} (q;q)_{m_n}
(q,t^{n-1} q^{2\phi_{1,n-1} + m_{1,n} +1};q)_{K -\phi_{1,n-1}}}
\nonumber \\
\hphantom{\mbox{l.h.s.\ of (\ref{sono6})}=}{}\times
\frac{
(t^{n-2} q^{\phi_1 + 2\phi_{2,n-1} + m_{1,n} + 1} ;q)_{\phi_1}
(t^{n-1} q^{2\phi_{2,n-1} + m_{2,n}} ;q)_{\phi_1}
}
{
(t^{n-1} q^{\phi_1 + 2\phi_{2,n-1} + m_{1,n}} ;q)_{\phi_1}
(t^{n-2} q^{2\phi_{2,n-1} + m_{2,n} + 1} ;q)_{\phi_{1}}
}. \label{sono7}
\end{gather}
The summation  with respect to $\phi_1$ in (\ref{sono7}) can be written as follows
\begin{gather}
 {(t;q)_{m_1} (t;q)_{m_n} (t,
t^nq^{2\phi_{2,n-1} + m_{1,n}};q)_{K-\phi_{2,n-1}} \over
(q;q)_{m_1} (q;q)_{m_n} (q,
t^{n-1}q^{2\phi_{2,n-1} + m_{1,n} +1};q)_{K-\phi_{2,n-1}}} \nonumber \\
\qquad{} \times {}_{12}W_{11}\biggl( a ; q^{-\theta}, q^{\theta}af, f, b, c, \biggl(\frac{aq}{f}\biggr)^{{1 \over 2}},
-\biggl(\frac{aq}{f}\biggr)^{{1 \over 2}}, \biggl(\frac{aq^2}{f}\biggr)^{{1 \over 2}},
-\biggl(\frac{aq^2}{f}\biggr)^{{1 \over 2}} ; q, q/f \biggr), \label{sono8}
\end{gather}
where
$
 a=t^{n-1}q^{2\phi_{2,n-1} + m_{1,n}}, \,  b=tq^{m_1}, c=t^{n-1}q^{2\phi_{2,n-1}
+ m_{2,n}}, \,f=t, \, \theta = K - \phi_{2,n-1}. $
Applying the formula~(\ref{hitotsumaenoprp}),  (\ref{sono8}) is transformed into
\begin{gather}
{(t;q)_{m_1} (t;q)_{m_n} \over (q;q)_{m_1} (q;q)_{m_n}}\nonumber\\
\qquad{}\times
\sum_{j = 0}^{K - \phi_{2,n-1}}
{(t^{n-1}q^{2\phi_{2,n-1} + m_{2,n}};q)_{K -j - \phi_{2,n-1}}(t;q)_{K -j - \phi_{2,n-1}} (t;q)_j(tq^{m_1};q)_j
\over
(t^{n-2}q^{2\phi_{2,n-1} + m_{2,n} +1};q)_{K -j - \phi_{2,n-1}}(q;q)_{K - j - \phi_{2,n-1}} (q;q)_j(q^{m_1 +1};q)_j}.   \label{sono9}
\end{gather}
Using (\ref{sono9})  and changing the order of the summations,  we can
express the r.h.s.\ of~(\ref{sono7}) as follows
\begin{gather}
 \sum_{j = 0}^{K} {(t;q)_j (t;q)_{j+m_1} \over (q;q)_j (q;q)_{j+m_1}}
\sum_{\phi_{n-1} = 0}^{K -j} \sum_{\phi_{n-2} = 0}^{K -j - \phi_{n-1}}
\cdots \sum_{\phi_3 = 0}^{K -j - \phi_{4,n-1}}
\prod_{3 \leq l \leq n-1}{(t;q)_{\phi_l} (t;q)_{\phi_l + m_l}
\over (q;q)_{\phi_l} (q;q)_{\phi_l + m_l}} \nonumber\\
{}\times
\frac{
(t^{n-l-1} q^{\phi_l + 2\phi_{l+1,n-1} + m_{l,n} + 1} ;q)_{\phi_l}
(t^{n-l} q^{2\phi_{l+1,n-1} + m_{l+1,n}} ;q)_{\phi_l}
}
{
(t^{n-l} q^{\phi_l + 2\phi_{l+1,n-1} + \phi_{l,n}} ;q)_{\phi_l}
(t^{n-l-1} q^{2\phi_{l+1,n-1} + m_{l+1,n} + 1} ;q)_{\phi_l}
} \nonumber\\
{} \times
\sum_{\phi_2 = 0}^{K - j -\phi_{3,n-1}}\!\!
{(t;q)_{\phi_2 + m_2} (t;q)_{m_n}(t;q)_{\phi_2}
(t^{n-1}q^{2\phi_{2,n-1}
 + m_{2,n-1}};q)_{K -j-\phi_{2,n-1}}(t;q)_{K -j-\phi_{2,n-1}}
\over
(q;q)_{\phi_2 + m_2}(q;q)_{m_n}(q;q)_{\phi_2}
(t^{n-2}q^{2\phi_{2,n-1}
 + m_{2,n} +1};q)_{K -j-\phi_{2,n-1}}(q;q)_{K -j-\phi_{2,n-1}}} \nonumber \\
{} \times
{(t^{n-2}q^{2\phi_{3,n-1}
 + m_{3,n}};q)_{\phi_2}
(t^{n-3}q^{\phi_2 + 2\phi_{3,n-1}
 + m_{2,n} +1};q)_{\phi_2} \over
(t^{n-3}q^{2\phi_{3,n-1}
 + m_{3,n} +1};q)_{\phi_2}
( t^{n-2}q^{\phi_2 + 2\phi_{3,n-1}
 + m_{2,n}};q)_{\phi_2}}. \label{sono10}
\end{gather}
By the induction hypothesis, (\ref{sono10}) can be rewritten as
\begin{gather*}
 \sum_{j = 0}^{K} {(t;q)_j (t;q)_{j+m_1} \over (q;q)_j (q;q)_{j+m_1}}
\sum_{\phi_2 + \phi_3 + \cdots + \phi_{n-1} + \phi= K -j}
{(t;q)_{\phi} (t;q)_{\phi + m_n} \over (q;q)_{\phi} (q;q)_{\phi + m_n}}
\prod_{2 \leq l \leq n-1}
\frac{(t;q)_{\phi_l}(t;q)_{\phi_l + m_l}}
{(q;q)_{\phi_l}(q;q)_{\phi_l + m_l}} \\
\qquad{}=  \sum_{j  + \phi_2 + \phi_3 + \cdots + \phi_{n-1} +\phi = K}
{(t;q)_j (t;q)_{j+m_1} \over (q;q)_j (q;q)_{j+m_1}}
{(t;q)_{\phi} (t;q)_{\phi + m_n} \over (q;q)_{\phi} (q;q)_{\phi + m_n}}
\prod_{2 \leq l \leq n-1}
\frac{(t;q)_{\phi_l}(t;q)_{\phi_l + m_l}}
{(q;q)_{\phi_l}(q;q)_{\phi_l + m_l}} \\
\qquad{}=    \mbox{r.h.s.\ of (\ref{sono6})}.
\end{gather*}
Hence we have completed the proof of Theorem \ref{N=ippan}.

\section[Tableau formulas for Macdonald polynomials of type $D_n$]{Tableau formulas for Macdonald polynomials of type $\boldsymbol{D_n}$}\label{section4}

In this section, we investigate the tableau formula
for the one-row Macdonald polynomials
of type~$D_n$.
Let $I := \{ 1,2,\dots,n-1,n, \overline{n}, \overline{n-1}, \dots, \overline{1} \}$
be the index set with the ordering
\begin{gather}\label{OrderOfD}
1 \prec 2 \prec \cdots \prec n-1{\raisebox{-3pt}{\rotatebox{30}{$\prec$}}\atop \raisebox{5pt}{\rotatebox{-30}{$\prec$}}}\,\,
 {\raisebox{3pt}{$n$}\atop\raisebox{-2pt}{$\overline{n}$}}\,
{\raisebox{3pt}{\rotatebox{-30}{$\prec$}}\atop \raisebox{-1pt}{\rotatebox{30}{$\prec$}}} \,\,
\overline{n-1}
\prec \cdots \prec \overline{1}.
\end{gather}
Denoting by $\Lambda_1$ the f\/irst fundamental weight of type~$D_n$, let $P_{(r)}^{(D_n)}(x;q,t)$ be the Macdonald
polynomials of type $D_n$ associated with the weights $r \Lambda_1$ for
$r \in {\mathbb{ Z}_{\geq 0}}$.

We recall Lassalle's formula
for $P_{(r)}^{(D_n)}(x;q,t)$.
Lassalle introduced
$G_r(x;q,t)$ def\/ined by the generating function
\begin{gather}\label{G(x;q,t)}
\prod_{i=1}^{n}\frac{(tux_i;q)_{\infty}}{(ux_i;q)_{\infty}}
\frac{(tu/x_i;q)_{\infty}}{(u/x_i;q)_{\infty}} = \sum_{r \geq 0} G_r(x;q,t)u^r.
\end{gather}
Comparing the coef\/f\/icient of $u^r$ of the equation (\ref{G(x;q,t)}), we obtain
\begin{gather}
G_r(x;q,t) =
\sum_{
\theta_{1}  + \theta_{2} + \cdots + \theta_{\overline{1}} = r}
\prod_{i \in I}
\frac{(t;q)_{\theta_{i}}}{(q;q)_{\theta_{i}}}
x_1^{\theta_1 - \theta_{\overline{1}}}x_2^{\theta_2 - \theta_{\overline{2}}}\cdots
x_n^{\theta_n - \theta_{\overline{n}}}, \label{Lassalle-G}
\end{gather}
where $\theta_i$, $\theta_{\overline{i}} \in \mathbb{Z}_{\geq 0}$, $i = 1,2, \dots, n$.

The following theorem \cite[Theorem~5.2]{HNS} was conjectured by Lassalle~\cite{La}.

\begin{thm}[\cite{HNS,La}] \label{Lassalle} For any positive integer $r$ we have
\begin{gather}
G_r(x;q,t) = \sum^{[r/2]}_{i = 0}
\frac{(t;q)_{r-2i}}{(q;q)_{r-2i}}P_{(r-2i)}^{(D_n)}(x;q,t)
\frac{(t;q)_i(t^n q^{r-2i};q )_i}{(q;q)_i(t^{n-1} q^{r-2i +1};q)_i}.\label{G_to_P}
\end{gather}
Conversely
\begin{gather}
 P_{(r)}^{(D_n)}(x;q,t) = {(q;q)_r \over (t;q)_r}
\sum^{[r/2]}_{i = 0}
G_{r-2i}(x;q,t)t^i
\frac{(1/t;q)_i(t^n q^{r-i};q )_i}{(q;q)_i(t^{n-1} q^{r-i};q)_i}{1-t^n q^{r-2i} \over 1-t^n q^{r-i}}. \label{Lassalle-PD}
\end{gather}
\end{thm}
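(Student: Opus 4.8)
The plan is to establish the two expansions in Theorem~\ref{Lassalle} by a single generating-function argument, deducing the second (the inversion) from the first by a standard $q$-series inversion. First I would take \eqref{G_to_P} as the statement to prove directly, and then show \eqref{Lassalle-PD} follows formally.

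For \eqref{G_to_P}, the key is to pass from the one-row Macdonald polynomials $P_{(m)}^{(D_n)}(x;q,t)$ to the full generating series $\sum_{r\ge 0}G_r(x;q,t)u^r$ defined by \eqref{G(x;q,t)}. The natural route is to use the known generating function for the one-row Koornwinder (or type-$D_n$ Macdonald) polynomials: there is a classical identity expressing $\sum_{m\ge 0}c_m\,P_{(m)}^{(D_n)}(x;q,t)\,u^m$ in product form, where $c_m$ is an explicit ratio of $q$-shifted factorials of the type $\frac{(t;q)_m}{(q;q)_m}$ times a Gegenbauer/Rogers-type normalization. Comparing that product with the product $\prod_i\frac{(tux_i;q)_\infty(tu/x_i;q)_\infty}{(ux_i;q)_\infty(u/x_i;q)_\infty}$ defining $G_r$, the two differ by a single-variable factor depending only on $u$ (not on $x$); expanding that scalar factor as a power series in $u^2$ and reading off the coefficient of $u^r$ produces exactly the sum over $i$ with the weight $\frac{(t;q)_i(t^nq^{r-2i};q)_i}{(q;q)_i(t^{n-1}q^{r-2i+1};q)_i}$. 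So the proof reduces to (a) identifying that scalar factor and (b) a $q$-binomial-type expansion of it.

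Concretely, I would proceed as follows. Step 1: write down the generating function $\sum_{m\ge 0}\widetilde{P}_{(m)}^{(D_n)}(x;q,t)u^m$ in product form, with $\widetilde P$ suitably normalized, quoting Lassalle or \cite{HNS}. Step 2: divide the defining product in \eqref{G(x;q,t)} by this generating product; verify that the quotient is a power series $\sum_{i\ge 0}b_i(q,t)u^{2i}$ independent of $x$ (using that both products are built from the same $(tux_i^{\pm1};q)_\infty/(ux_i^{\pm1};q)_\infty$ blocks). Step 3: evaluate the coefficients $b_i$ explicitly; this is the single place a genuine basic-hypergeometric computation is needed, and it should come out as $b_i=\frac{(t;q)_i(\cdots;q)_i}{(q;q)_i(\cdots;q)_i}$ after a $q$-Gauss or $q$-Chu--Vandermonde summation, the shifts matching those in \eqref{G_to_P}. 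Step 4: multiply the two series and extract the coefficient of $u^r$, which gives \eqref{G_to_P}. Step 5: invert. Since the transition matrix from $\{P_{(r-2i)}\}$ to $G_r$ in \eqref{G_to_P} is unitriangular in the variable $i$ (with respect to the filtration by degree, step $2$), it is invertible over $\mathbb{Q}(q,t)$; solving the triangular system, or equivalently applying the matrix inversion lemma for $q$-shifted factorials (e.g.\ the Bressoud or Gasper--Rahman inverse pair), yields \eqref{Lassalle-PD}, and one checks that the inverse weight is precisely $t^i\frac{(1/t;q)_i(t^nq^{r-i};q)_i}{(q;q)_i(t^{n-1}q^{r-i};q)_i}\cdot\frac{1-t^nq^{r-2i}}{1-t^nq^{r-i}}$.

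The main obstacle is Step 3: pinning down the scalar series $\sum b_i u^{2i}$ with the exact $q$-shifted-factorial shifts appearing on the right of \eqref{G_to_P}, since this requires knowing the precise normalization of the one-row type-$D_n$ Macdonald polynomials and carrying out the relevant summation (this is really the content of \cite[Theorem~5.2]{HNS}). A secondary technical point is justifying that the quotient in Step 2 is genuinely $x$-independent; this is clear at the level of formal products but should be stated carefully, perhaps by specializing $x$ to a principal value where $P_{(m)}^{(D_n)}$ is known in closed form and matching. Once Steps 2 and 3 are in hand, Steps 4 and 5 are routine coefficient extraction and triangular inversion.
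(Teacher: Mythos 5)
You should first be aware that the paper contains no proof of Theorem~\ref{Lassalle}: it is imported verbatim from \cite[Theorem~5.2]{HNS}, where Lassalle's conjecture \cite{La} is established (by arguments involving Koornwinder polynomials and basic hypergeometric transformation formulas), and the present paper only uses it as an input to derive the tableau formulas. So there is no internal proof to compare with; the question is whether your sketch would stand as an independent proof. It would not, because Steps~1--3 rest on a false premise. You assume the one-row type-$D_n$ Macdonald polynomials admit a product-form generating function $\sum_{m\ge 0}c_m P_{(m)}^{(D_n)}(x;q,t)u^m$, so that dividing the product in (\ref{G(x;q,t)}) by it leaves an $x$-independent series $\sum_i b_iu^{2i}$. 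Such a Rogers-type product formula exists in type $A$ and in rank one (continuous $q$-ultraspherical polynomials), but not for $P_{(m)}^{(D_n)}$ with $n\ge 2$; indeed the whole point of Lassalle's identity is that the product (\ref{G(x;q,t)}) is \emph{not} term-by-term proportional to a generating series of the $P_{(m)}^{(D_n)}$.

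More decisively, even granting some generating series $A(u)=\sum_m c_m P_{(m)}^{(D_n)}u^m$, a factorization $G(u)=A(u)\cdot\sum_i b_iu^{2i}$ with $x$-free $b_i$ would force the coefficient of $P_{(r-2i)}^{(D_n)}$ in $G_r$ to separate as $c_{r-2i}\,b_i$, i.e.\ a function of $m=r-2i$ times a function of $i$. The actual coefficient in (\ref{G_to_P}) contains the factor $(t^nq^{m};q)_i/(t^{n-1}q^{m+1};q)_i$; comparing $i=1$ with $i=2$ shows that the ratio $(1-t^nq^{m+1})/(1-t^{n-1}q^{m+2})$ would have to be independent of $m$, which fails for generic $q$, $t$. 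Hence no choice of scalar series in Step~3 can reproduce (\ref{G_to_P}), and the reduction collapses. Step~5 is sound in principle: the transition matrix in (\ref{G_to_P}) is triangular with nonzero diagonal entries $(t;q)_r/(q;q)_r$, so (\ref{Lassalle-PD}) does follow from (\ref{G_to_P}) by inversion, the verification of the explicit inverse weights amounting to a terminating very-well-poised summation. But that only shifts the entire burden onto (\ref{G_to_P}), and your own ``main obstacle'' paragraph concedes that this step is really the content of \cite[Theorem~5.2]{HNS} --- that is, the proposal ultimately cites the statement it sets out to prove.
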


\begin{rem}\sloppy
If we insert~(\ref{Lassalle-G}) in~(\ref{Lassalle-PD}), we have an explicit combinatorial formula for
$P_{(r)}^{(D_n)}(x;q,t)$. However, it is not clear how we can extract
the combinatorics of the Kashiwara--Nakashima tableaux of type~$D$ from this Lassalle's version.
\end{rem}

Here, we establish the tableau formula for $P^{(D_n)}_{(r)}(x;q,t)$.

\begin{thm} We have\label{main}
\begin{gather}
 P_{(r)}^{(D_n)}(x;q,t)=
\frac{(q;q)_r}{(t;q)_r}\sum_{
\theta_1 + \theta_2 + \cdots +\theta_{\overline{1}} = r \atop
\theta_n\theta_{\overline{n}}=0}
\prod_{k \in I}
\frac{(t;q)_{\theta_{k}}}{(q;q)_{\theta_{k}}}\label{TSF_D} \\
{}\times \prod_{1 \leq l \leq n-1}\!\!
\frac{(t^{n-l} q^{\theta_{l+1} + \theta_{l+2} + \cdots
 + \theta_{\overline{l+1}}} ;q)_{\theta_{\overline{l}}}
(t^{n-l-1} q^{\theta_{l} + \theta_{l+1} + \cdots
 + \theta_{\overline{l+1}} +1 } ;q)_{\theta_{\overline{l}}}}
{(t^{n-l-1} q^{\theta_{l+1} + \theta_{l+2} + \cdots + \theta_{\overline{l+1}} + 1} ;q)_{\theta_{\overline{l}}}
(t^{n-l} q^{\theta_{l} + \theta_{l+1} + \cdots + \theta_{\overline{l+1}} } ;q)_{\theta_{\overline{l}}}}
x_1^{\theta_1 - \theta_{\overline{1}}}x_2^{\theta_2 - \theta_{\overline{2}}}\cdots
x_n^{\theta_n - \theta_{\overline{n}}}.\nonumber
\end{gather}
\end{thm}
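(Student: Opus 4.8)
The plan is to deduce Theorem~\ref{main} from Lassalle's expansion (Theorem~\ref{Lassalle}) together with the transformation formula of Theorem~\ref{N=ippan}. Since the transition between $\{G_{r}\}$ and $\{P^{(D_n)}_{(r)}\}$ given by~(\ref{G_to_P}) and~(\ref{Lassalle-PD}) is triangular with nonzero diagonal entry $(t;q)_{r}/(q;q)_{r}$, it suffices to prove that the right-hand side of~(\ref{TSF_D}), call it $F_{r}(x)$, obeys the \emph{same} relation to $\{G_{r}\}$ as $\{P^{(D_n)}_{(r)}\}$ does, i.e.\ equation~(\ref{G_to_P}) with $F_{r-2i}$ in place of $P^{(D_n)}_{(r-2i)}$; an induction on $r$ then forces $F_{r}=P^{(D_n)}_{(r)}$. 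So I would substitute the explicit form~(\ref{Lassalle-G}) of $G_{r}$ on the left and the tableau formula~(\ref{TSF_D}) for each $F_{r-2i}$ on the right; the prefactors $(q;q)_{\bullet}/(t;q)_{\bullet}$ cancel, and the remaining statement is an identity of Laurent polynomials whose coefficient of an arbitrary monomial $x_{1}^{\mu_{1}}\cdots x_{n}^{\mu_{n}}$ must be checked on both sides.

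Consider first $\mu$ with all $\mu_{j}\ge 0$. On the left-hand side the sum in~(\ref{Lassalle-G}) runs over all $\theta$ with $\theta_{j}-\theta_{\overline{j}}=\mu_{j}$; writing $\phi_{j}:=\theta_{\overline{j}}$ so that $\theta_{j}=\phi_{j}+\mu_{j}$, the coefficient of $x^{\mu}$ collapses to $\sum_{\phi_{1}+\cdots+\phi_{n}=K}\prod_{j=1}^{n}\frac{(t;q)_{\phi_{j}}(t;q)_{\phi_{j}+\mu_{j}}}{(q;q)_{\phi_{j}}(q;q)_{\phi_{j}+\mu_{j}}}$ with $K:=\tfrac12(r-\sum_{j}\mu_{j})$, which is exactly the right-hand side of~(\ref{sono6}) for $m_{j}=\mu_{j}$. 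On the right-hand side the constraint $\theta_{n}\theta_{\overline{n}}=0$ together with $\theta_{n}-\theta_{\overline{n}}=\mu_{n}$ pins down $(\theta_{n},\theta_{\overline{n}})=(\mu_{n},0)$; again setting $\phi_{l}:=\theta_{\overline{l}}$ for $1\le l\le n-1$ and $m_{l}:=\mu_{l}$, one reads off that the summation index $i$ of~(\ref{G_to_P}) becomes that of~(\ref{sono6}), that $\phi_{1}+\cdots+\phi_{n-1}+i=K$, that the exponent $r-2i$ in the $i$-factor equals $2K+m_{1,n}-2i$, that the factor $(t;q)_{m_{n}}/(q;q)_{m_{n}}$ and the products $\prod_{l}(t;q)_{\phi_{l}}(t;q)_{\phi_{l}+m_{l}}/\bigl((q;q)_{\phi_{l}}(q;q)_{\phi_{l}+m_{l}}\bigr)$ match, and, using $\theta_{l}+\theta_{l+1}+\cdots+\theta_{\overline{l+1}}=\phi_{l}+2\phi_{l+1,n-1}+m_{l,n}$ and $\theta_{l+1}+\cdots+\theta_{\overline{l+1}}=2\phi_{l+1,n-1}+m_{l+1,n}$, that each ratio of $q$-shifted factorials in~(\ref{TSF_D}) coincides with the matching ratio in the left-hand side of~(\ref{sono6}). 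Hence the coefficient identity for such $\mu$ is precisely~(\ref{sono6}) with this $K$ and these $m_{j}$, and Theorem~\ref{N=ippan} closes it.

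For a general $\mu$ the same scheme applies with $m_{l}:=|\mu_{l}|$ and with $\phi_{l}:=\min(\theta_{l},\theta_{\overline{l}})$: whenever $\mu_{l}<0$ one has $\theta_{\overline{l}}=\phi_{l}+|\mu_{l}|$ rather than $\phi_{l}$, so before the matching above becomes transparent one splits each factor $(\,\cdot\,;q)_{\theta_{\overline{l}}}=(\,\cdot\,;q)_{\phi_{l}}\,(\,\cdot\,q^{\phi_{l}};q)_{|\mu_{l}|}$ and absorbs the surplus $(\,\cdot\,;q)_{|\mu_{l}|}$-pieces against the numerator and denominator; this is a routine $q$-shifted factorial manipulation (one already meets it in the rank-one reduction underlying Proposition~\ref{koushiki-3}), and the coefficient identity again reduces to~(\ref{sono6}) with $K=\tfrac12(r-\sum_{j}|\mu_{j}|)$ and $m_{j}=|\mu_{j}|$. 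Summing over all $\mu$ yields the required relation, hence $F_{r}=P^{(D_n)}_{(r)}$. I expect the real obstacle to lie not in this reduction, which is bookkeeping, but in Theorem~\ref{N=ippan} itself---the very-well-poised ${}_{12}W_{11}$ evaluation of Theorem~\ref{koushiki-2} and the induction on $n$ carried out in Sections~\ref{section2} and~\ref{section3}; within the reduction the only delicate point is the exponent-by-exponent verification that the $q$-shifted factorials of~(\ref{TSF_D}) are those of~(\ref{sono6}) under the substitution described, including the surplus-absorption in the non-dominant case.
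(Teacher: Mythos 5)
Your proposal is correct and follows essentially the same route as the paper: reduce via the triangularity of Lassalle's relation~(\ref{G_to_P}) to a coefficient-wise identity, and identify the coefficient of $x_1^{m_1}\cdots x_n^{m_n}$ on the two sides with the two sides of~(\ref{sono6}), which is Theorem~\ref{N=ippan}. The only cosmetic difference is that for monomials with negative exponents the paper simply invokes the $(\mathbb{Z}/2\mathbb{Z})^n$ symmetry recorded in Remark~\ref{rem_D} to reduce to the dominant case, whereas you carry out the equivalent ``surplus absorption'' of $q$-shifted factorials by hand; the underlying computation is the same.
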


\begin{rem}\label{rem_D}
Set
$X:=t^{n-l} q^{\theta_{l+1} + \theta_{l+2} + \cdots
 + \theta_{\overline{l+1}}} $ and $Y:=t^{n-l-1} q^{\theta_{l+1} + \theta_{l+2} + \cdots
 + \theta_{\overline{l+1}}} $ for simplicity. The last product in (\ref{TSF_D}) can be rewritten as
\begin{gather*}
\prod_{1 \leq l \leq n-1}
\frac{(X;q)_{\theta_{\overline{l}}}(q^{\theta_l}Y;q)_{\theta_{\overline{l}}}}
{(Y;q)_{\theta_{\overline{l}}}(q^{\theta_l}X;q)_{\theta_{\overline{l}}}}
=
\prod_{1 \leq l \leq n-1}
\frac{(X;q)_{\theta_l}(X;q)_{\theta_{\overline{l}}}(Y;q)_{\theta_l+ \theta_{\overline{l}}}}
{(Y;q)_{\theta_l}(Y;q)_{\theta_{\overline{l}}}(X;q)_{\theta_l + \theta_{\overline{l}}}}.
\end{gather*}
This implies that r.h.s.\ of (\ref{TSF_D})
has the symmetry $(\mathbb{Z}/2\mathbb{Z})^n$. Namely,
it is invariant under the exchange
$x_l \leftrightarrow {1 \over x_l}$, $1 \leq l \leq n-1$.
\end{rem}

\begin{rem}
It would be an intriguing problem to show the factorization of the Macdonald polynomial $P_{(r)}^{(D_n)}(x;q,t)$
from our formula~(\ref{TSF_D})
when we  make the principal specialization:
\begin{gather*}
P_{(r)}^{(D_n)}\big(t^{n-1},\ldots,t,1;q,t\big)=t^{-r(n-1)}{(t^n;q)_r (t^{2(n-1)};q)_r \over (t;q)_r (t^{(n-1)};q)_r}.
\end{gather*}
\end{rem}

\begin{rem}
Setting $n=1$ in (\ref{TSF_D}), we have
\begin{gather*}
P_{(r)}^{(D_1)}(x;q,t)
=x^r + x^{-r}.
\end{gather*}
Setting $n=2$ in (\ref{TSF_D}), we have
\begin{gather*}
P_{(r)}^{(D_2)}(x;q,t) =  \frac{(q;q)_r}{(t;q)_r}
\sum_{\theta_1 + \theta_2 + \theta_{\overline{2}} + \theta_{\overline{1}} = r\atop
\theta_2  \theta_{\overline{2}} =0}
{(t;q)_{\theta_1} \over  (q;q)_{\theta_1}}
{(t;q)_{\theta_2} \over  (q;q)_{\theta_2}}
{(t;q)_{\theta_{\overline{2}}} \over  (q;q)_{\theta_{\overline{2}}}}
{(t;q)_{\theta_{\overline{1}}} \over  (q;q)_{\theta_{\overline{1}}}} \nonumber \\
\hphantom{P_{(r)}^{(D_2)}(x;q,t) =}{}
\times
{
(t q^{\theta_2+\theta_{\overline{2}}};q)_{\theta_{\overline{1}}}
\over
(q q^{\theta_2+\theta_{\overline{2}}};q)_{\theta_{\overline{1}}}
}
{
(q q^{\theta_1+\theta_2+\theta_{\overline{2}}};q)_{\theta_{\overline{1}}}
\over
(t q^{\theta_1+\theta_2+\theta_{\overline{2}}};q)_{\theta_{\overline{1}}}
}
  x_1^{\theta_1 - \theta_{\overline{1}}} x_2^{\theta_2 - \theta_{\overline{2}}} \nonumber\\
\hphantom{P_{(r)}^{(D_2)}(x;q,t)}{} =
\left(  \frac{(q;q)_r}{(t;q)_r}
  \sum_{\mu_1 + \mu_2 = r}
  {(t;q)_{\mu_1} \over  (q;q)_{\mu_1}}
{(t;q)_{\mu_2} \over  (q;q)_{\mu_2}}
x_1^{(\mu_1-\mu_2)/2}x_2^{-(\mu_1-\mu_2)/2}\right)\\
\hphantom{P_{(r)}^{(D_2)}(x;q,t) =}{}\times
\left(  \frac{(q;q)_r}{(t;q)_r}
  \sum_{\nu_1 + \nu_2 = r}
  {(t;q)_{\nu_1} \over  (q;q)_{\nu_1}}
{(t;q)_{\nu_2} \over  (q;q)_{\nu_2}}
x_1^{(\nu_1-\nu_2)/2}x_2^{(\nu_1-\nu_2)/2}\right),\nonumber
\end{gather*}
which shows the symmetry $D_2=A_1\times A_1$.
\end{rem}

\begin{proof}[Proof of Theorem \ref{main}]
Let $\big\{\Psi^{(D_n)}_{(r)}(x;q,t)\big\}_{r\in \mathbb{Z}_{\geq 0}}$ be a certain collection of  Laurent polyno\-mials.
By using Lassalle's formula~(\ref{G_to_P}),
it is easily proved by induction that the inf\/inite system of equalities for $\Psi^{(D_n)}_{(r)}(x;q,t)${\samepage
\begin{gather}
  G_r(x;q,t) = \sum^{[r/2]}_{i = 0}
\frac{(t;q)_{r-2i}}{(q;q)_{r-2i}}\Psi_{(r-2i)}^{(D_n)}(x;q,t)
\frac{(t;q)_i(t^n q^{r-2i};q )_i}{(q;q)_i(t^{n-1} q^{r-2i +1};q)_i},\qquad  r\in \mathbb{Z}_{\geq 0},\label{G_to_Psi}
\end{gather}
gives us $\Psi_{(r)}^{(D_n)}(x;q,t)=P_{(r)}^{(D_n)}(x;q,t)$, $r\in \mathbb{Z}_{\geq 0}$.}

Set
\begin{gather*}
 \Psi^{(D_n)}_{(r)}(x;q,t)=\frac{(q;q)_r}{(t;q)_r}\sum_{
\theta_1 + \theta_2 + \cdots +\theta_{\overline{1}} = r \atop
\theta_n\theta_{\overline{n}}=0}
\prod_{k \in I}
\frac{(t;q)_{\theta_{k}}}{(q;q)_{\theta_{k}}}   \\
\hphantom{\Psi^{(D_n)}_{(r)}(x;q,t)=}{}
 \times \prod_{1 \leq l \leq n-1}
\frac{(t^{n-l} q^{\theta_{l+1} + \theta_{l+2} + \cdots
 + \theta_{\overline{l+1}}} ;q)_{\theta_{\overline{l}}}
(t^{n-l-1} q^{\theta_{l} + \theta_{l+1} + \cdots
 + \theta_{\overline{l+1}} +1 } ;q)_{\theta_{\overline{l}}}}
{(t^{n-l-1} q^{\theta_{l+1} + \theta_{l+2} + \cdots + \theta_{\overline{l+1}} + 1} ;q)_{\theta_{\overline{l}}}
(t^{n-l} q^{\theta_{l} + \theta_{l+1} + \cdots + \theta_{\overline{l+1}} } ;q)_{\theta_{\overline{l}}}}\\
\hphantom{\Psi^{(D_n)}_{(r)}(x;q,t)=}{}
\times x_1^{\theta_1 - \theta_{\overline{1}}}x_2^{\theta_2 - \theta_{\overline{2}}}\cdots
x_n^{\theta_n - \theta_{\overline{n}}}.
\end{gather*}
We prove this family of  Laurent polynomials satisf\/ies (\ref{G_to_Psi}).
In view of the  $(\mathbb{Z}/2\mathbb{Z})^n$ symmetry of $\Psi^{(D_n)}_{(r)}(x;q,t)$,
it is suf\/f\/icient to consider in  (\ref{G_to_Psi}) the
coef\/f\/icients of the monomials $x_1^{m_1}\cdots x_n^{m_n}$
with nonnegative powers $m_1,\dots,m_n \in \mathbb{Z}_{\geq 0}$ only.
Let $r\in \mathbb{Z}_{\geq 0}$, and
f\/ix $m_1,\dots,m_n \in \mathbb{Z}_{\geq 0}$ arbitrarily. Set  $K := {1 \over 2}(r - m_1 - m_2 - \cdots - m_n)$ for simplicity.
Setting
\begin{gather*}
 \theta_k  = m_k +\phi_k, \qquad \theta_{\overline{k}} =\phi_k,\qquad  1 \leq k \leq n-1,\qquad
 \theta_n =m_n,\qquad  \theta_{\overline{n}} = 0,
\end{gather*}
one f\/inds that the
coef\/f\/icients of the monomials $x_1^{m_1}\cdots x_n^{m_n}$ in (\ref{G_to_Psi})  is exactly given by  l.h.s.\ of~(\ref{sono6}).
On the other hand, the coef\/f\/icients of the monomials $x_1^{m_1}\cdots x_n^{m_n}$ in $G_r(x;q,t) $ is clearly r.h.s.\ of~(\ref{sono6}).
Hence we have proved~(\ref{G_to_Psi}), which establishes the tableau formula $P^{(D_n)}_{(r)}(x;q,t)=\Psi^{(D_n)}_{(r)}(x;q,t)$.
\end{proof}

\section{Transformation formula~III}\label{section5}

In this section, we present a transformation formula to describe the Macdonald polynomials of
type~$C_n$.

\begin{thm}\label{mainthm_C}
Let $n \in \mathbb{Z}_{\geq 2}$.
Fix $K, m_1, m_2, \dots, m_n \in \mathbb{Z}_{\geq 0}$ arbitrarily.
Set
$ m_{l,n}:=\sum\limits_{k=l}^{n} m_k $, $\phi_{l,n}:=\sum\limits_{k=l}^{n} \phi_k $ for simplicity of display. We have
\begin{gather}
\sum_{
\phi_1 , \phi_2 , \dots , \phi_{n} ,i\geq 0 \atop
\phi_1 + \phi_2 + \cdots + \phi_{n} +i = K}
\prod_{1 \leq k \leq n}{(t;q)_{\phi_k} (t;q)_{\phi_k + m_k}
\over (q;q)_{\phi_k} (q;q)_{\phi_k +m_k}}\cdot
\big(t^2/q\big)^i \frac{(t^{-1}q;q)_i (t^nq^{2K + m_{1,n} -2i};q)_i}
{(q;q)_i (t^{n+1}q^{2K + m_{1,n} -2i};q)_i} \nonumber \\
\qquad\quad{}\times
\prod_{1\leq l \leq n}\frac{
(t^{n-l+1} q^{\phi_l + \phi_{l+1,n} + m_{l,n}};q)_{\phi_l}
(t^{n-l+2}q^{ 2\phi_{l+1,n} + m_{l+1,n} -1};q)_{\phi_l}}
{(t^{n-l+2}q^{\phi_l + \phi_{l+1,n} + m_{l,n} -1};q)_{\phi_l}
(t^{n-l+1}q^{ 2\phi_{l+1,n} + m_{l+1,n}};q)_{\phi_l}} \nonumber \\
\qquad{}= \sum_{
\phi_1 , \phi_2 , \dots , \phi_{n} \geq 0 \atop
\phi_1 + \phi_2 + \cdots + \phi_{n} = K}
\prod_{1 \leq j \leq n}{(t;q)_{\phi_j} (t;q)_{\phi_j + m_j}
\over (q;q)_{\phi_j} (q;q)_{\phi_j +m_j}}.\label{N=ippan_C}
\end{gather}
\end{thm}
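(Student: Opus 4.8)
The plan is to mirror the structure of the proof of Theorem~\ref{N=ippan}, proving~(\ref{N=ippan_C}) by induction on~$n$. The base case $n=1$ should be handled directly: the left-hand side collapses to a single ${}_{12}W_{11}$ series in $\phi_1$ (after absorbing the $i$-sum), obtained by the same bookkeeping as in the passage leading to~(\ref{sono5}), with a choice of parameters $a$, $b$, $c$, $f$ adapted to the $C_n$ factors (the exponents involve $\phi_l+\phi_{l+1,n}+m_{l,n}$ rather than $\phi_l+2\phi_{l+1,n-1}+m_{l,n}$, and the extra $i$-factor carries $t^2/q$, $t^{-1}q$, $t^{n+1}$ in place of the $D_n$ data, which is exactly the shape produced by the balancing $q/f$ argument). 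Applying~(\ref{hitotsumaenoprp}) then splits that ${}_{12}W_{11}$ into the desired double product, giving the $n=1$ identity; this is the $C_n$-analogue of Proposition~\ref{koushiki-3}.

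For the inductive step, assume~(\ref{N=ippan_C}) holds for $n-1$. On the left-hand side I would peel off the innermost summation variable, say $\phi_1$, together with the $i$-sum, freezing $\phi_2,\dots,\phi_n$. The key computational claim is that this $(\phi_1,i)$-double sum equals a prefactor times a single very-well-poised balanced ${}_{12}W_{11}\bigl(a;q^{-\theta},q^{\theta}af,f,b,c,\ldots;q,q/f\bigr)$ with $\theta=K-\phi_{2,n}$ and with $a$, $b$, $c$, $f$ read off from the $l=1$ factor of the $C_n$ product and the $i$-factor — precisely the manipulation carried out in~(\ref{sono7})--(\ref{sono8}) for $D_n$. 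Invoking~(\ref{hitotsumaenoprp}) rewrites this ${}_{12}W_{11}$ as a single sum over a new index $j$, producing the $C_n$-counterpart of~(\ref{sono9}). After interchanging the order of summation one obtains an expression in which the variables $\phi_2,\dots,\phi_n$ and a residual variable play the role of the $n-1$ summation indices of the lower-rank identity, with $\phi_n$ (or $i$) re-labelled as the new ``$i$'' and $m_1+\textrm{(shift)}$, $j$ playing the roles of the outer data; applying the induction hypothesis to that inner $(n-1)$-fold sum collapses it to the single product, and re-summing over $j$ yields the right-hand side of~(\ref{N=ippan_C}).

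The main obstacle is the same one that makes the $D_n$ computation delicate: verifying that the $(\phi_1,i)$-summation genuinely assembles into a ${}_{12}W_{11}$ of the special form to which~(\ref{hitotsumaenoprp}) applies — i.e.\ checking that the numerous $q$-shifted factorials with exponents linear in $\phi_1$, $i$, $m_{1,n}$, $\phi_{2,n}$ reorganize so that the series is very well-poised \emph{and} balanced with argument exactly $q/f$, and that after the change of variables $s:=m-j$ (as in~(\ref{sono3})) and the $q$-Saalsch\"utz step everything telescopes correctly. The presence of the extra $i$-factor specific to $C_n$, with base $t^2/q$ and the $t^{n+1}$ in the denominator, means the parameter identification ($af=bc$ in the notation of Theorem~\ref{koushiki-2}) must be checked carefully; once that is in place, the remaining bookkeeping — matching shifted indices so that the induction hypothesis literally applies to the inner sum — is routine but must be tracked with care. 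I do not anticipate needing any transformation beyond~(\ref{hitotsumaenoprp}), Watson's formula~(\ref{Watson}), and $q$-Saalsch\"utz, all already available.
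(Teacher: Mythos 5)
There is a genuine gap, and it lies exactly where you locate ``the main obstacle'': the $(\phi_1,i)$-sum in the type~$C_n$ identity does \emph{not} assemble into the very-well-poised balanced ${}_{12}W_{11}$ of Theorem~\ref{koushiki-2}, so formula~(\ref{hitotsumaenoprp}) is not applicable. The four square-root parameters $\pm(aq/f)^{1/2}$, $\pm(aq^2/f)^{1/2}$ in~(\ref{sono1}) contribute a factor $(aq/f;q)_{2k}/(af;q)_{2k}$ to the $k$-th term, and with $f=t$ (forced, since the argument of the series must be $q/f$ and the geometric part of the $C_n$ summand is $(q/t)^{\phi_1}$) this is a genuinely nontrivial ``$2k$-structure''. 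In the $D_n$ case that structure is exactly what is produced by the factor $(t^nq^{\cdots-2i};q)_i/(t^{n-1}q^{\cdots-2i+1};q)_i$ after the substitution $i=K-\phi_{1,n-1}$, which is why~(\ref{sono8}) is a~${}_{12}W_{11}$. In the $C_n$ case the $i$-factor carries the extra power $(t^2/q)^i$ and the parameter pair $(t^{-1}q,\,t^{n+1})$ in place of $(t,\,t^{n-1}q)$; when one carries out the same substitution $i=K-\phi_{2,n}-\phi_1$ and collects the $q$-shifted factorials, the net $2k$-content collapses to the single very-well-poised factor $(1-aq^{2k})/(1-a)$ rather than to $(1-aq^{2k})(aq/t;q)_{2k}/\bigl((1-a)(at;q)_{2k}\bigr)$. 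In other words, the inner sum is an ${}_{8}W_{7}$ with argument $q/t$, not the ${}_{12}W_{11}$ of Theorem~\ref{koushiki-2}, and the two are not interchangeable (the ${}_{12}W_{11}$ of~(\ref{sono1}) does not degenerate to an ${}_{8}W_{7}$ for $f=t$).

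Consequently the toolkit you restrict yourself to --- (\ref{hitotsumaenoprp}), Watson~(\ref{Watson}) and $q$-Saalsch\"utz --- is not sufficient. The paper's proof of Theorem~\ref{mainthm_C} applies Watson's formula directly to the ${}_{8}W_{7}$ to get a ${}_4\phi_3$, and then must invoke Sears' ${}_4\phi_3$ transformation formulas (equations (III.15), (III.16) and (2.10.4) of Gasper--Rahman) --- once in the inductive step and twice in the base case --- to reorganize the ${}_4\phi_3$ so that, after an interchange of summation and two elementary $q$-shifted-factorial identities, the inner sum acquires the shape of the $(n-1)$-fold left-hand side of~(\ref{N=ippan_C}) and the induction hypothesis applies. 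The base case is also $n=2$ (the theorem is stated for $n\ge 2$), and it is not a~$C_n$-analogue of Proposition~\ref{koushiki-3}: it requires Watson, two Sears transformations, an interchange of summation, and finally the ${}_{6}W_{5}$ summation formula \cite[equation~(2.4.2)]{GR}, none of which appear in your outline. The overall inductive architecture you propose (peel off $\phi_1$ and $i$, transform, interchange, induct) is the right one and matches the paper, but the central series identification and hence the list of required transformation formulas are wrong.
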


We prove Theorem~\ref{mainthm_C} by induction on $n$.
In Section~\ref{n=2_C} we show Theorem~\ref{mainthm_C} in the case of $n = 2$ and in Section~\ref{general_C} we treat the general case.

\subsection[The case $n=2$]{The case $\boldsymbol{n=2}$}\label{n=2_C}
Setting $n=2$, we have
\begin{gather*}
 \text{r.h.s.\ of (\ref{N=ippan_C})}
=  \sum_{\phi_1=0}^{K}
\frac{(t;q)_{m_1+\phi_1}(t;q)_{\phi_1}(t;q)_{m_2+K-\phi_1}(t;q)_{K-\phi_1}
}    {(q;q)_{m_1+\phi_1}(q;q)_{\phi_1}(q;q)_{m_2+K-\phi_1}(q;q)_{K-\phi_1}
}\\
\hphantom{\text{r.h.s.\ of (\ref{N=ippan_C})}  }{}
=   \sum_{\phi_1=0}^{K}\frac{ (t;q)_{m_1}(t;q)_{m_2}
(tq^{m_1};q)_{\phi_1}(t;q)_{\phi_1}(tq^{m_2};q)_{K-\phi_1}(t;q)_{K-\phi_1} }
    { (q;q)_{m_1}(q;q)_{m_2}(q^{m_1+1};q)_{\phi_1}
    (q;q)_{\phi_1}(q^{m_2+1};q)_{K-\phi_1}
    (q;q)_{K-\phi_1} }.
\end{gather*}
Then we have
\begin{gather}
 \text{l.h.s.\ of (\ref{N=ippan_C})}
= \sum_{\phi_2=0}^{K}\sum_{\phi_1=0}^{K-\phi_2}
    \frac{(t;q)_{\phi_2}(t;q)_{\phi_2+m_{2}}(t;q)_{\phi_1}(t;q)_{\phi_1+m_{1}}
}
    {(q;q)_{\phi_2}(q;q)_{\phi_2+m_{2}}(q;q)_{\phi_1}(q;q)_{\phi_1+m_{1}}}\nonumber\\
\hphantom{\text{l.h.s.\ of (\ref{N=ippan_C})}= }{}\times
    \frac{(t^2q^{m_1+m_2+2\phi_2+\phi_1};q)_{\phi_1}
(t^3q^{m_2+2\phi_2-1};q)_{\phi_1}}
    {(t^3q^{m_1+m_2+2\phi_2+\phi_1-1};q)_{\phi_1}(t^2q^{m_2+2\phi_2};q)_{\phi_1}}
\nonumber \\
\hphantom{\text{l.h.s.\ of (\ref{N=ippan_C})} =}{}
    \times \frac{ (tq^{m_2+\phi_2};q)_{\phi_2}(t^2q^{-1};q)_{\phi_2} }
    { (t^2q^{m_2-1+\phi_2};q)_{\phi_2}(t;q)_{\phi_2} }
    (t^2/q)^{K-\phi_2-\phi_1}\nonumber\\
\hphantom{\text{l.h.s.\ of (\ref{N=ippan_C})} =}{}\times
    \frac{ (q/t;q)_{K-\phi_2-\phi_1}
(t^2q^{m_1+m_2+2\phi_2+2\phi_1};q)_{K-\phi_2-\phi_1} }
    { (q;q)_{K-\phi_2-\phi_1}(t^3q^{m_1+m_2+2\phi_2+2\phi_1};q)_{K-\phi_2-\phi_1} }
\nonumber \\
{}
= \sum_{\phi_2=0}^{K}
    \frac{(t;q)_{m_2+\phi_2}(t;q)_{\phi_2}(tq^{m_2+\phi_2};q)_{\phi_2}(t^2q^{-1};q)_{\phi_2}
    (q/t;q)_{K-\phi_2}(t^2q^{m_1+m_2+2\phi_2};q)_{K-\phi_2}}
    {(q;q)_{m_2+\phi_2}(q;q)_{\phi_2}(tq^{m_2-1+\phi_2};q)_{\phi_2}(t;q)_{\phi_2}
    (q;q)_{K-\phi_2}(t^3q^{m_1+m_2+2\phi_2};q)_{K-\phi_2}} \nonumber \\
\hphantom{\text{l.h.s.\ of (\ref{N=ippan_C})}=}{}
\times  {}_8\phi_7\left[\begin{array}{@{}c@{}} a,qa^{\frac{1}{2}},-qa^{\frac{1}{2}},b,c,d,e,q^{\phi_2-K}
 \\
    a^{\frac{1}{2}},-a^{\frac{1}{2}},aq/b,aq/c,aq/d,aq/e,aq^{K-\phi_2+1} \end{array}
    ;q,\frac{a^2q^{K-\phi_2+2}}{bcde}\right],\label{n-ni1}
\end{gather}
where $a=t^3q^{m_1+m_2-1+\phi_2}$, $b=tq^{m_1}$, $c=t^2q^{K+m_1+m_2+\phi_2}$,
    $d=t$,
$e=t^3q^{m_2-1+2\phi_2}$, $\frac{a^2q^{K-\phi_2+2}}{bcde}=q/t.$
Applying Watson's transformation formula~(\ref{Watson}) to the~${}_8\phi_7$ series of~(\ref{n-ni1}), we have
\begin{gather}
 \mbox{${}_8\phi_7$ series of (\ref{n-ni1})}
=  \frac{(t^3q^{m_1+m_2+2\phi_2}, t^{-1}q^{m_1+1};q)_{K-\phi_2}}
    {(t^2q^{m_1+m_2+2\phi_2}, q^{m_1+1};q)_{K-\phi_2}}\nonumber\\
\hphantom{\mbox{${}_8\phi_7$ series of (\ref{n-ni1})}  =}{}\times
    {}_4\phi_3\left[\begin{array}{@{}c@{}} q^{\phi_2-K-m_2},t,t^3q^{m_2-1+2\phi_2},q^{\phi_2-K}
    \\ t^2q^{m_2+2\phi_2},tq^{\phi_2-K},
    tq^{\phi_2-K-m_1} \end{array};q,q\right].\label{n-ni2}
\end{gather}
Using Sears' transformation formulas \cite[p.~242, Appendix~III, equations~(III.15) and (III.16)]{GR},
we rewrite  the ${}_4\phi_3$ series in~(\ref{n-ni2}) as follows
\begin{gather}
     {}_4\phi_3\left[\begin{array}{@{}c@{}} q^{\phi_2-K-m_2},t,
    t^3q^{m_2-1+2\phi_2},q^{\phi_2-K}
    \\ t^2q^{m_2+2\phi_2},tq^{\phi_2-K},
    tq^{\phi_2-K-m_1} \end{array};q,q\right]
= \frac{(q^{m_1-K+\phi_2}, t^{-1}q^{1-m_2-K-\phi_2};q)_{K-\phi_2}}
    {(tq^{-m_1-K-\phi_2}, t^2q^{1-m_2-K-\phi_2};q)_{K-\phi_2}}\nonumber\\
\qquad{}\times
    {}_4\phi_3\left[\begin{array}{@{}c@{}} tq^{m_1},t,t^{-2}q^{1-m_2-K-\phi_2},q^{\phi_2-K}
    \\ tq^{\phi_2-K},t^{-1}q^{1-m_2-K-\phi_2},q^{m_1+1}
    \end{array};q,q\right]\label{n-ni3}.
\end{gather}
Sears' transformation gives us
\begin{gather}
 {}_4\phi_3 \text{ series in (\ref{n-ni3})}
 =
\frac{(t^{-2}q^{1-m_2-K-\phi_2}, q;q)_{K-\phi_2}}
    {(t^{-1}q^{1-m_2-K-\phi_2}, t^{-1}q;q)_{K-\phi_2}}\nonumber\\
\hphantom{{}_4\phi_3 \text{ series in (\ref{n-ni3})}  =}{}\times
    {}_4\phi_3\left[\begin{array}{@{}c@{}} t,t^2q^{m_1+m_2+k+\phi_2},t^{-1}q,q^{\phi_2-K}
    \\ q^{m_1+1},q,t^2q^{m_2} \end{array};q,q\right],\label{n-ni4}
\end{gather}
and
\begin{gather*}
{}_4\phi_3 \text{ series in (\ref{n-ni4})}
=
\frac{(t^{-1}q, tq^{m_2+2\phi_2};q)_{K-\phi_2}}
    {(q, t^2q^{m_2+2\phi_2};q)_{K-\phi_2}}t^{K-\phi_2}\nonumber\\
\hphantom{{}_4\phi_3 \text{ series in (\ref{n-ni4})}  =}{}\times
    {}_4\phi_3\left[\begin{array}{@{}c@{}} t,t^{-2}q^{1-m_2-K-\phi_2},tq^{m_1},q^{\phi_2-K}
    \\ q^{m_1+1},tq^{\phi_2-K},t^{-1}q^{1-m_2-K-\phi_2}
    \end{array};q,q\right].
\end{gather*}
Then we have
\begin{gather*}
 \mbox{l.h.s.\ of (\ref{n-ni1})} \\
{}= \sum_{\phi_2=0}^{K}\frac{(t;q)_{m_1}(t)_{m_2}
    (t^2q^{m_2+\phi_2};q)_{\phi_2}(t^2q^{-1};q)_{\phi_2}
    (t^{-1}q;q)_{K-\phi_2}(tq^{m_2+2\phi_2};q)_{K-\phi_2}}
    {(q;q)_{m_1}(q;q)_{m_2}(q;q)_{\phi_2}(q^{m_2+1};q)_{\phi_2}
    (t^2q^{m_2-1+\phi_2};q)_{\phi_2}(q;q)_{K-\phi_2}(t^2q^{m_2+2\phi_2};q)_{K-\phi_2}
}
\\
\quad {}\times {}_4\phi_3\left[\begin{array}{@{}c@{}} t,t^{-2}q^{1-m_2-K-\phi_2},tq^{m_1},q^{\phi_2-K}  \\
q^{m_1+1},tq^{\phi_2-K},t^{-1}q^{1-m_2-K-\phi_2} \end{array};q,q\right]  \\
{}= \sum_{\phi_1=0}^{K}\frac{ (t;q)_{m_1}(t;q)_{m_2}(t^{-1}q;q)_{K}(tq^{m_2};q)_{K}
    (q^{-K}, t^{-2}q^{1-m_2-K}, t, tq^{m_1};q)_{\phi_1} }
    {(q;q)_{m_1}(q;q)_{m_2}(q;q)_{K}(t^2q^{m_2};q)_{K}
    (tq^{-K}, t^{-1}q^{1-m_2-K}, q, q^{m_1+1};q)_{\phi_1}
}
\\
\quad {} \times \sum_{\phi_2=0}^{K-\phi_1}
    \frac{(t^2q^{m_2-1}, tq^{\frac{m_2+1}{2}}, -tq^{\frac{m_2+1}{2}},
t^2q^{-1}, tq^{m_2+K-\phi_1}, q^{\phi_1-K};q)_{\phi_2}}
    {(q, tq^{\frac{m_2-1}{2}}, -tq^{\frac{m_2-1}{2}}, q^{m_2+1}, tq^{\phi_1-K},
 t^2q^{m_2+K-\phi_1};q)_{\phi_2}}
    (t^2/q)^K(q/t)^{\phi_2}q^{\phi_1}  \\
{} =\sum_{\phi_1=0}^{K}\frac{(t;q)_{m_1}(t;q)_{m_2}(t^{-1}q;q)_{K}(tq^{m_2};q)_{K}
    (q^{-K}, t^{-2}q^{1-m_2-K}, t, tq^{m_1};q)_{\phi_1}}
    {(q;q)_{m_1}(q;q)_{m_2}(q;q)_{K}(t^2q^{m_2};q)_{K}
    (tq^{-K}, t^{-1}q^{1-m_2-K}, q, q^{m_1+1};q)_{\phi_1}}
     (t^2/q)^Kq^{\phi_1} \nonumber \\
\quad{} \times {}_6\phi_5\left[\begin{array}{@{}c@{}}
t^2q^{m_2-1},tq^{\frac{m_2+1}{2}},-tq^{\frac{m_2+1}{2}},t^2q^{-1},tq^{m_2+K-\phi_1},q^{\phi_1-K}
 \\
q,tq^{\frac{m_2-1}{2}},-tq^{\frac{m_2-1}{2}},q^{m_2+1},tq^{\phi_1-K},t^2q^{m_2+K-\phi_1} \end{array}
;q,q/t\right]  \\
{} = \sum_{\phi_1=0}^k\frac{(t;q)_{m_1}(t;q)_{m_2}(t^{-1}q;q)_{K}(tq^{m_2};q)_{K}
    (q^{-K}, t^{-2}q^{1-m_2-K}, t, tq^{m_1};q)_{\phi_1}}
    {(q;q)_{m_1}(q;q)_{m_2}(q;q)_{K}(t^2q^{m_2};q)_{K}
    (tq^{-K}, t^{-1}q^{1-m_2-K}, q, q^{m_1+1};q)_{\phi_1}}
 \\
\quad {} \times \frac{(t^2q^{m_2};q)_{K-\phi_1}(t^{-1}q^{1-K+\phi_1};q)_{K-\phi_1}}
    {(q^{m_2+1};q)_{K-\phi_1}(tq^{\phi_1-K};q)_{K-\phi_1}}(t^2/q)^Kq^{\phi_1}
    =\text{r.h.s.\ of (\ref{N=ippan_C})}.
\end{gather*}

\subsection{The general case}\label{general_C}

We have
\begin{gather}
\text{l.h.s.\ of (\ref{N=ippan_C})} =
\sum_{\phi_{2,n} = 0}^K
\prod_{2 \leq k \leq n}{(t;q)_{\phi_k} (t;q)_{\phi_k + m_k}
\over (q;q)_{\phi_k} (q;q)_{\phi_k +m_k}} \nonumber \\
\qquad {}\times
\prod_{2\leq l\leq n}
\frac{
(t^{n-l+1} q^{\phi_l + \phi_{l+1,n} + m_{l,n}};q)_
{\phi_l}
(t^{n-l+2}q^{ 2\phi_{l+1,n} + m_{l+1,n} -1};q)_
{\phi_l}}
{(t^{n-l+2}q^{\phi_l + \phi_{l+1,n} + m_{l,n} -1};q)_
{\phi_l}
(t^{n-l+1}q^{ 2\phi_{l+1,n} + m_{l+1,n}};q)_
{\phi_l}} \nonumber \\
\qquad {} \times
\sum_{\phi_1 = 0}^{K-\phi_{2,n}}
\frac{
(t^{n} q^{\phi_1 + \phi_{2,n} + m_{1,n}};q)_
{\phi_1}
(t^{n+1}q^{ 2\phi_{2,n} + m_{2,n} -1};q)_
{\phi_1}}
{(t^{n+1}q^{\phi_1 + \phi_{2,n} + m_{1,n} -1};q)_
{\phi_1}
(t^{n}q^{ 2\phi_{2,n} + m_{2,n}};q)_
{\phi_1}} \nonumber \\
\qquad{}\times
(t^2/q)^{K-\phi_{2,n} - \phi_1}
\frac{(t^{-1}q;q)_{K-\phi_{2,n} - \phi_1}(t^nq^{2\phi_{2,n} + m_{1,n} +2\phi_1};q)_{K-\phi_{2,n} - \phi_1}}
{(q;q)_{K-\phi_{2,n} - \phi_1} (t^{n+1}q^{2\phi_{2,n} + m_{1,n} +2\phi_1};q)_{K-\phi_{2,n} - \phi_1}}.\label{sono7_C}
\end{gather}
Here, we can describe the summation with respect to  $\phi_1$ in (\ref{sono7_C})  as follows:
\begin{gather}
\frac{(t;q)_{m_1}(t^{n}q^{m_{1,n}+2\phi_{2,n}}, t^{-1}q;q)_{K-\phi_{2,n}}
}
{(q;q)_{m_1}(t^{n+1}q^{m_{1,n}+2\phi_{2,n}}, q;q)_{K-\phi_{2,n}}
}(t^2/q)^{K-\phi_{2,n}} \nonumber\\
\qquad\quad{}\times
{}_8\phi_7 \biggl[
\begin{array}{@{}c@{}} t^{n+1}q^{m_{1,n}+2\phi_{2,n}-1},t^{\frac{n+1}{2}}
q^{\frac{m_{1,n}+2\phi_{2,n}+1}{2}},  \\
t^{\frac{n+1}{2}}q^{\frac{m_{1,n}+2\phi_{2,n}-1}{2}},-t^{\frac{n+1}{2}}
q^{\frac{m_{1,n}+2\phi_{2,n}-1}{2}},
\end{array} \nonumber \\
 \qquad\qquad{} \begin{array}{@{}c@{}}
 -t^{\frac{n+1}{2}}q^{\frac{m_{1,n}+2\phi_{2,n}+1}{2}},
t,tq^{m_1},t^{n+1}q^{m_{2,n}+2\phi_{2,n}-1},t^{n}q^{m_{1,n}+2\phi_{2,n}+K},q^{\phi_{2,n}-K} \\
t^{n}q^{m_{1,n}+2\phi_{2,n}},t^{n}
q^{m_{2,n}+2\phi_{2,n}},q^{m_1+1},tq^{\phi_{2,n}-K},t^{n+1}q^{m_{1,n}+2\phi_{2,n}+K}
\end{array};q,q/t\biggr] \nonumber \\
\qquad{} = \frac{ (t;q)_{m_1}(t^{n}q^{m_{1,n}+2\phi_{2,n}}, t^{-1}q,
t^{-n}q^{1-K-m_{2,n}-\phi_{2,n}};q)_{K-\phi_{2,n}} }
{ (q;q)_{m_1}(q^{m_1+1}, q, tq^{\phi_{2,n}-K};q)_{K-\phi_{2,n}} }
(t^2/q)^{K-\phi_{2,n}} \nonumber \\
 \qquad\quad{} \times  {}_4\phi_3\left[\begin{array}{@{}c@{}} t^{n-1}q^{m_{2,n}+2\phi_{2,n}},t^{n+1}
q^{m_{2,n}+2\phi_{2,n}-1},t^{n}q^{m_{1,n}+\phi_{2,n}+K},q^{\phi_{2,n}-K} \\
t^{n}q^{m_{1,n}+2\phi_{2,n}},t^{n}q^{m_{2,n}+2\phi_{2,n}},t^{n}q^{m_{2,n}+2\phi_{2,n}}
\end{array};
    q,q\right].
\label{n-L2}
\end{gather}
Applying Sears' ${}_4\phi_3$ transformation formula \cite[p.~41, equation~(2.10.4)]{GR} to the r.h.s.\  of
(\ref{n-L2}), we have
\begin{gather}
 \text{r.h.s.\ of (\ref{n-L2})}
 = (t^2/q)^{K-\phi_{2,n}}
\frac{ (t;q)_{m_1}(t^{-1}q;q)_{K-\phi_{2,n}}(t^{n-1}q^{m_{2,n} + 2\phi_{2,n}};q)_{K-\phi_{2,n}}
 }
{ (q;q)_{m_1}(q;q)_{K-\phi_{2,n}}(t^{n}q^{m_{2,n} + 2\phi_{2,n}};q)_{K-\phi_{2,n}} } \nonumber \\
\hphantom{\text{r.h.s.\ of (\ref{n-L2})} =}{}
\times
{}_4\phi_3\left[\begin{array}{@{}c@{}} tq^{m_1},t,t^{-n}q^{1-K-m_{2,n} -\phi_{2,n}},q^{\phi_{2,n}-K} \\
q^{m_1+1},tq^{\phi_{2,n}-K},t^{-n+1}q^{1-K-m_{2,n} -\phi_{2,n}} \end{array};q,q\right].
\label{n-L4}
\end{gather}
Using the following two formulas
\begin{gather*}
 \frac{(t^{-1}q;q)_{K-\phi_{2,n}}(q^{\phi_{2,n}-K};q)_{\phi_1}}
    {(q;q)_{K-\phi_{2,n}}(tq^{\phi_{2,n}-K};q)_{\phi_1}}
    =\frac{(t^{-1}q;q)_{K-\phi_{2,n}-\phi_1}}
    {(q;q)_{K-\phi_{2,n}-\phi_1}}
    t^{-\phi_1}, \\
\frac{(t^{n-1}q^{m_{2,n}+2\phi_{2,n}};q)_{K-\phi_{2,n}}
(t^{-n}q^{1-K-m_{2,n}-\phi_{2,n}};q)_{\phi_1}}
    {(t^{n}q^{m_{2,n}+2\phi_{2,n}};q)_{K-\phi_{2,n}}
(t^{-n+1}q^{1-K-m_{2,n}-\phi_{2,n}};q)_{\phi_1}}
    =\frac{(t^{n-1}q^{m_{2,n}+2\phi_{2,n}};q)_{K-\phi_{2,n}-\phi_1}}
    {(t^{n}q^{m_{2,n}+2\phi_{2,n}};q)_{K-\phi_{2,n}-\phi_1}}
    t^{-\phi_1},
\end{gather*}
we have
\begin{gather*}
 \mbox{r.h.s.\  of (\ref{n-L4})} \\
 =\sum_{\phi_1=0}^{K-\phi_{2,n}}
\frac{(t;q)_{m_1}(t;q)_{\phi_1}(tq^{m_1};q)_{\phi_1}
(t^{-1}q;q)_{K-\phi_{2,n}-\phi_1}(t^{n-1}q^{m_{2,n} + 2\phi_{2,n}};q)_{K-\phi_{2,n}-\phi_1}}
    {(q;q)_{m_1}(q;q)_{\phi_1}(q^{m_1+1};q)_{\phi_1}
    (q;q)_{K-\phi_{2,n}-\phi_1}(t^{n}q^{m_{2,n} + 2\phi_{2,n}};q)_{K-\phi_{2,n}-\phi_1}} \\
\quad{}\times
    (t^2/q)^{K-\phi_{2,n}-\phi_1}.
\end{gather*}
Then we have
\begin{gather}
\text{l.h.s.\  of } (\ref{sono7_C})
=
\sum_{\phi_{2,n} = 0}^K
\prod_{2 \leq k \leq n}{(t;q)_{\phi_k} (t;q)_{\phi_k + m_k}
\over (q;q)_{\phi_k} (q;q)_{\phi_k +m_k}} \nonumber \\
\quad{}\times
\prod_{2\leq l\leq n}
\frac{
(t^{n-l+1} q^{\phi_l + \phi_{l+1,n} + m_{l,n}};q)_
{\phi_l}
(t^{n-l+2}q^{ 2\phi_{l+1,n} + m_{l+1,n} -1};q)_
{\phi_l}}
{(t^{n-l+2}q^{\phi_l + \phi_{l+1,n} + m_{l,n} -1};q)_
{\phi_l}
(t^{n-l+1}q^{ 2\phi_{l+1,n} + m_{l+1,n}};q)_
{\phi_l}} \nonumber \\
\quad{}\times\sum_{\phi_1=0}^{K-\phi_{2,n}}
\frac{(t;q)_{m_1}(t;q)_{\phi_1}(tq^{m_1};q)_{\phi_1}
(t^{-1}q;q)_{K-\phi_{2,n}-\phi_1}(t^{n-1}q^{m_{2,n} + 2\phi_{2,n}};q)_{K-\phi_{2,n}-\phi_1}}
    {(q;q)_{m_1}(q;q)_{\phi_1}(q^{m_1+1};q)_{\phi_1}
    (q;q)_{K-\phi_{2,n}-\phi_1}(t^{n}q^{m_{2,n} + 2\phi_{2,n}};q)_{K-\phi_{2,n}-\phi_1}} \nonumber\\
    \qquad \quad {}\times
    (t^2/q)^{K-\phi_{2,n}-\phi_1} \nonumber \\
{}=
\sum_{\phi_1 = 0}^K
{(t;q)_{\phi_1} (t;q)_{\phi_1 + m_1}
\over (q;q)_{\phi_1} (q;q)_{\phi_1 +m_1}}
\sum_{\phi_{2,n} = 0}^{K-\phi_1}
\prod_{2 \leq k \leq n}{(t;q)_{\phi_k} (t;q)_{\phi_k + m_k}
\over (q;q)_{\phi_k} (q;q)_{\phi_k +m_k}} \nonumber \\
\quad{}\times
\prod_{2\leq l\leq n}
\frac{
(t^{n-l+1} q^{\phi_l + \phi_{l+1,n} + m_{l,n}};q)_
{\phi_l}
(t^{n-l+2}q^{ 2\phi_{l+1,n} + m_{l+1,n} -1};q)_
{\phi_l}}
{(t^{n-l+2}q^{\phi_l + \phi_{l+1,n} + m_{l,n} -1};q)_
{\phi_l}
(t^{n-l+1}q^{ 2\phi_{l+1,n} + m_{l+1,n}};q)_
{\phi_l}} \nonumber \\
\quad{}\times
\frac{
(t^{-1}q;q)_{K-\phi_1-\phi_{2,n}}(t^{n-1}q^{m_{2,n} + 2\phi_{2,n}};q)_{K-\phi_1-\phi_{2,n}}}
    {    (q;q)_{K-\phi_1-\phi_{2,n}}(t^{n}q^{m_{2,n} + 2\phi_{2,n}};q)_{K-\phi_1-\phi_{2,n}}}
    (t^2/q)^{K-\phi_1-\phi_{2,n}} \nonumber \\
{}=
\sum_{\phi_1 = 0}^K
{(t;q)_{\phi_1} (t;q)_{\phi_1 + m_1}
\over (q;q)_{\phi_1} (q;q)_{\phi_1 +m_1}}
\sum_{\phi_{2}+\cdots+\phi_{n}+i = K-\phi_1}
\prod_{2 \leq k \leq n}{(t;q)_{\phi_k} (t;q)_{\phi_k + m_k}
\over (q;q)_{\phi_k} (q;q)_{\phi_k +m_k}} \nonumber \\
\quad{}\times
\frac{(t^{-1}q;q)_{i}(t^{n-1}q^{m_{2,n} + 2\phi_{2,n}};q)_{i}}
    {    (q;q)_{i}(t^{n}q^{m_{2,n} + 2\phi_{2,n}};q)_{i}}
    (t^2/q)^{i} \nonumber \\
\quad{}\times
\prod_{2\leq l\leq n}
\frac{
(t^{n-l+1} q^{\phi_l + \phi_{l+1,n} + m_{l,n}};q)_
{\phi_l}
(t^{n-l+2}q^{ 2\phi_{l+1,n} + m_{l+1,n} -1};q)_
{\phi_l}}
{(t^{n-l+2}q^{\phi_l + \phi_{l+1,n} + m_{l,n} -1};q)_
{\phi_l}
(t^{n-l+1}q^{ 2\phi_{l+1,n} + m_{l+1,n}};q)_
{\phi_l}}.\label{n-L5}
\end{gather}
By the induction hypothesis, we obtain
\begin{gather*}
 \mbox{r.h.s.\ of (\ref{n-L5})}
 =\sum_{\phi_1 = 0}^K
{(t;q)_{\phi_1} (t;q)_{\phi_1 + m_1}
\over (q;q)_{\phi_1} (q;q)_{\phi_1 +m_1}}
\sum_{\phi_2+\cdots+\phi_n=K-\phi_1}
\prod_{2 \leq k \leq n}{(t;q)_{\phi_k} (t;q)_{\phi_k + m_k}
\over (q;q)_{\phi_k} (q;q)_{\phi_k +m_k}} \\
\hphantom{\mbox{r.h.s.\ of (\ref{n-L5})} }{} =
\sum_{\phi_1+\phi_2+\cdots+\phi_n=K}
\prod_{1 \leq j \leq n}{(t;q)_{\phi_j} (t;q)_{\phi_j + m_j}
\over (q;q)_{\phi_j} (q;q)_{\phi_j +m_j}} = \text{r.h.s.\ of (\ref{N=ippan_C})}.
\end{gather*}

\section[Tableau formulas for Macdonald polynomials of type $C_n$]{Tableau formulas for Macdonald polynomials of type $\boldsymbol{C_n}$}\label{section6}

In this section, we establish the  tableau formulas
for the Macdonald polynomials
of type~$C_n$.
Let $I := \{ 1,2,\dots,n-1, n, \overline{n}, \overline{n-1}, \dots, \overline{1} \}$
be the index set with the ordering
\begin{gather}\label{OrderOfC}
1 \prec 2 \prec \cdots \prec {n-1}\prec  {n} \prec \overline{n} \prec \overline{n-1}
\prec \cdots \prec \overline{1}.
\end{gather}
Denoting by $\Lambda_1$ the f\/irst fundamental weight of type $C_n$, let $P_{(r)}^{(C_n)}(x;q,t,T)$ be the Macdonald
polynomials of type $C_n$ associated with the weights~$r \Lambda_1$ for
$r \in {\mathbb{Z}_{\geq 0}}$.

The following theorem \cite[Theorem 5.1]{HNS} was conjectured by
Lassalle~\cite{La}.
\begin{thm}[\cite{HNS, La}]\label{Lassalle_C} For any positive integer $r$ we have
\begin{gather}
 G_r(x;q,t) = \sum^{[r/2]}_{i = 0}
\frac{(t;q)_{r-2i}}{(q;q)_{r-2i}}P_{(r-2i)}^{(C_n)}(x;q,t,T)T^i
\frac{(t/T;q)_i(t^n q^{r-2i};q )_i}{(q;q)_i(T t^{n-1} q^{r-2i +1};q)_i}.\label{G_to_P_C}
\end{gather}
Conversely
\begin{gather*}
 P_{(r)}^{(C_n)}(x;q,t,T) = {(q;q)_r \over (t;q)_r}
\sum^{[r/2]}_{i = 0}
G_{r-2i}(x;q,t)t^i
\frac{(T/t;q)_i(t^n q^{r-i};q )_i}{(q;q)_i(T t^{n-1} q^{r-i};q)_i}{1-t^n q^{r-2i} \over 1-t^n q^{r-i}}.
\end{gather*}
\end{thm}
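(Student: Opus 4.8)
The plan is to run, with the extra Askey--Wilson parameter $T$ kept track of, the same argument that underlies the type $D_n$ statement of Theorem~\ref{Lassalle} (this is the route of~\cite{HNS}). First I would observe that the two displayed formulas are triangular inverses of one another, so that it suffices to prove the forward expansion~(\ref{G_to_P_C}). Indeed, for a fixed parity class the weights $r\Lambda_1,(r-2)\Lambda_1,(r-4)\Lambda_1,\dots$ form a chain under dominance, and~(\ref{G_to_P_C}) amounts to the change of basis from $\bigl\{P^{(C_n)}_{(r-2i)}(x;q,t,T)\bigr\}_{i\ge0}$ to $\bigl\{G_{r-2j}(x;q,t)\bigr\}_{j\ge0}$ given by a unitriangular matrix whose entries are the ratios of $q$-shifted factorials shown there. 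Inverting a unitriangular matrix of this shape is a terminating very-well-poised summation, and the $q$-Saalsch\"utz formula \cite[equation~(1.7.2)]{GR} collapses it to the closed form in the ``Conversely'' part; this is exactly the mechanism that carries~(\ref{G_to_P}) to~(\ref{Lassalle-PD}) in the $D_n$ case.

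It then remains to prove~(\ref{G_to_P_C}). Recall from Appendix~\ref{appendixA} that $P^{(C_n)}_{(r)}(x;q,t,T)$ is a degeneration of the one-row Koornwinder polynomial $K_{(r)}(x)$, with $T$ the surviving parameter. Working at the Koornwinder level, I would expand $G_r=\sum_\mu c_{r,\mu}K_\mu$, so that $c_{r,\mu}=\langle G_r,K_\mu\rangle/\langle K_\mu,K_\mu\rangle$ for the Koornwinder scalar product, and package all of these pairings at once through the generating function~(\ref{G(x;q,t)}): $\sum_{r\ge0}\langle G_r,K_\mu\rangle\,u^r$ is the pairing of $K_\mu$ against the kernel $\prod_i\frac{(tux_i;q)_\infty(tu/x_i;q)_\infty}{(ux_i;q)_\infty(u/x_i;q)_\infty}$, which is a Cauchy-type reproducing kernel coupling the $BC_n$ variables $x$ to a single auxiliary variable $u$. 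Because there is only one variable $u$, the Koornwinder Cauchy identity forces only one-row $K_\mu$ to appear, i.e.\ $\langle G_r,K_{\mu}\rangle=0$ unless $\mu$ is a single row $(s)$ with $s\le r$ and $s\equiv r\pmod 2$; this is the structural reason that only one-row Macdonald polynomials occur on the right of~(\ref{G_to_P_C}). For $\mu=(s)$ one is then left with a one-dimensional terminating basic hypergeometric sum which, together with the explicit value of $\langle K_{(s)},K_{(s)}\rangle$ (a Gustafson--Nassrallah--Rahman integral), reduces by a single very-well-poised ${}_{6}\phi_{5}$ (equivalently $q$-Saalsch\"utz) summation to the coefficient in~(\ref{G_to_P_C}); specialising the Askey--Wilson parameters then produces the factor $T^i(t/T;q)_i(t^nq^{r-2i};q)_i/\bigl((q;q)_i(Tt^{n-1}q^{r-2i+1};q)_i\bigr)$.

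A more hands-on alternative, staying nearer to the present paper, is to take as input the known explicit formula for $K_{(r)}(x)$, substitute it into~(\ref{G(x;q,t)}), and match the coefficient of $u^r x^\mu$ on both sides using Watson's transformation~(\ref{Watson}) and $q$-Saalsch\"utz, in the spirit of Sections~\ref{section2}--\ref{section3}; the telescoping of the $u$-series is again governed by those very-well-poised summations. Either way, \textbf{the main obstacle is twofold}: establishing that the $BC_n$ kernel in~(\ref{G(x;q,t)}) couples $K_\mu$ to $G_r$ only for one-row $\mu$ (so that~(\ref{G_to_P_C}) has precisely the stated support), and then correctly tracking the Askey--Wilson $\to C_n$ degeneration through the ensuing very-well-poised sum so that $T$ enters in exactly the written form. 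Everything else is routine manipulation of $q$-shifted factorials.
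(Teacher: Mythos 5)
First, a point of orientation: this theorem is not proved in the paper at all. It is Lassalle's conjecture \cite{La}, imported as \cite[Theorem~5.1]{HNS}, and the present paper uses it as an external input (via the system of equalities~(\ref{G_to_Psi_C})) to establish the tableau formulas. So there is no internal proof to compare your sketch against; it must be judged on its own merits, and against what \cite{HNS} actually does.

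Your reduction of the ``Conversely'' part to the forward expansion~(\ref{G_to_P_C}) by unitriangular matrix inversion is sound in principle (the inverse pair is of Bressoud type and does collapse by a terminating very-well-poised summation). The genuine gap is in your proposed proof of~(\ref{G_to_P_C}) itself. The kernel in~(\ref{G(x;q,t)}), namely $\prod_i\frac{(tux_i;q)_\infty(tu/x_i;q)_\infty}{(ux_i;q)_\infty(u/x_i;q)_\infty}$, is the type-$A$ Macdonald Cauchy kernel in the $2n$ variables $x_1^{\pm1},\dots,x_n^{\pm1}$ paired with a single variable $u$; it carries no trace of the Askey--Wilson parameters $(a,b,c,d)$ and is \emph{not} a reproducing kernel for the Koornwinder scalar product. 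Consequently there is no off-the-shelf ``Koornwinder Cauchy identity'' that forces $\langle G_r,K_\mu\rangle=0$ for $\mu$ not a single row: that support statement, together with the closed form of the surviving coefficients, \emph{is} the content of Lassalle's conjecture, and it is exactly what \cite{HNS} had to prove (by a quite different route, through new transformation formulas for Askey--Wilson/Koornwinder polynomials rather than a single ${}_6\phi_5$ or $q$-Saalsch\"utz evaluation against the Gustafson norm). You flag these two points yourself as ``the main obstacle,'' but they are not residual technicalities --- they are the whole theorem, and the sketch offers no mechanism for overcoming them. As written, the argument is essentially circular: it assumes a Cauchy-type expansion whose existence and shape are equivalent to the statement being proved.
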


First we prove the tableau formula for $P^{(C_n)}_{(r)}(x;q,t,t^2/q)$.
\begin{thm}\label{main_C} We have
\begin{gather}
P_{(r)}^{(C_n)}\big(x;q,t,t^2/q\big)=
\frac{(q;q)_r}{(t;q)_r}
    \displaystyle\sum_{\theta_{1}+\theta_{2}+\cdots+
    \theta_{\overline{1}}=r}
\prod_{k \in I}
\frac{(t;q)_{\theta_{k}}}{(q;q)_{\theta_{k}}}
\label{TSF_C}
\\
\qquad {}\times \prod_{1\leq l\leq n}\frac{(t^{n-l+1}q^{\theta_{l}+
    \cdots+\theta_{\overline{l+1}}};q)_{\theta_{\overline{l}}}
    (t^{n-l+2}q^{\theta_{l+1}+\cdots+\theta_{\overline{l+1}}-1};q)_
    {\theta_{\overline{l}}}}
    {(t^{n-l+2}q^{\theta_{l}+\cdots+\theta_
    {\overline{l+1}}-1};q)_{\theta_{\overline{l}}}
    (t^{n-l+1}q^{\theta_{l+1}+\cdots+\theta_{\overline{l+1}}};q)_
    {\theta_{\overline{l}}}}
    x_1^{\theta_{1}-\theta_{\overline{1}}} x_2^{\theta_{2}-\theta_{\overline{2}}}\cdots
    x_n^{\theta_{n}-\theta_{\overline{n}}}.\nonumber
\end{gather}
\end{thm}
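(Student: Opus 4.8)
The plan is to mirror exactly the argument used for type $D_n$ in the proof of Theorem~\ref{main}, replacing the role of Theorem~\ref{N=ippan} (the $D_n$ transformation formula) with that of Theorem~\ref{mainthm_C}. First I would introduce a family of Laurent polynomials $\big\{\Psi^{(C_n)}_{(r)}(x;q,t)\big\}_{r\in\mathbb{Z}_{\geq 0}}$ defined by the right-hand side of~(\ref{TSF_C}) with $T$ specialized to $t^2/q$. As in the $D_n$ case, Lassalle's inversion formula~(\ref{G_to_P_C}) (at $T=t^2/q$, so that $t/T=q/t=t^{-1}q$ and $Tt^{n-1}=t^{n+1}/q$) implies by an easy induction on $r$ that the infinite triangular system
\begin{gather*}
G_r(x;q,t)=\sum_{i=0}^{[r/2]}\frac{(t;q)_{r-2i}}{(q;q)_{r-2i}}\Psi^{(C_n)}_{(r-2i)}(x;q,t)\,(t^2/q)^i
\frac{(t^{-1}q;q)_i\,(t^nq^{r-2i};q)_i}{(q;q)_i\,(t^{n+1}q^{r-2i};q)_i},
\qquad r\in\mathbb{Z}_{\geq 0},
\end{gather*}
has $\Psi^{(C_n)}_{(r)}=P^{(C_n)}_{(r)}(x;q,t,t^2/q)$ as its unique solution. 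So it suffices to verify that the polynomials defined by the right side of~(\ref{TSF_C}) satisfy this system.

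Next I would reduce the identity to a coefficient comparison. Since the right side of~(\ref{TSF_C}) is $(\mathbb{Z}/2\mathbb{Z})^n$-symmetric (one can check this as in Remark~\ref{rem_D}, by rewriting the inner product over $\theta_l,\theta_{\overline l}$ symmetrically), it is enough to compare coefficients of a monomial $x_1^{m_1}\cdots x_n^{m_n}$ with all $m_i\in\mathbb{Z}_{\geq 0}$. Fix such $m_1,\dots,m_n$ and set $K:=\tfrac12(r-m_1-\cdots-m_n)$. Then I would make the substitution $\theta_k=m_k+\phi_k$, $\theta_{\overline k}=\phi_k$ for $1\le k\le n$, which turns $x_1^{\theta_1-\theta_{\overline 1}}\cdots x_n^{\theta_n-\theta_{\overline n}}$ into $x_1^{m_1}\cdots x_n^{m_n}$ and the constraint $\theta_1+\cdots+\theta_{\overline 1}=r-2i$ into $\phi_1+\cdots+\phi_n+i=K$ (the index $i$ coming from the $G_{r-2i}$ term in the system). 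The main computational task is then to check that, under this substitution, the coefficient of $x_1^{m_1}\cdots x_n^{m_n}$ produced by the left-hand side of the triangular system above is precisely the left-hand side of~(\ref{N=ippan_C}), while the coefficient in $G_r(x;q,t)$ (via~(\ref{Lassalle-G})) is exactly the right-hand side of~(\ref{N=ippan_C}). Matching the various $q$-shifted factorials — in particular lining up the products $\prod_{1\le l\le n}$ in~(\ref{TSF_C}) with the products in~(\ref{N=ippan_C}), and checking the $i$-dependent factor $(t^2/q)^i(t^{-1}q;q)_i(t^nq^{2K+m_{1,n}-2i};q)_i/\big((q;q)_i(t^{n+1}q^{2K+m_{1,n}-2i};q)_i\big)$ against the Lassalle weight — is the bookkeeping heart of the proof. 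Once this identification is made, Theorem~\ref{mainthm_C} gives the equality of the two sides, hence the triangular system holds, hence $\Psi^{(C_n)}_{(r)}=P^{(C_n)}_{(r)}(x;q,t,t^2/q)$.

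The step I expect to be the main obstacle is precisely the exponent bookkeeping in the previous paragraph: one must verify that the shift $T\mapsto t^2/q$ makes the $C_n$ Lassalle weight in~(\ref{G_to_P_C}) coincide term-by-term with the $i$-block appearing in~(\ref{N=ippan_C}), and that the telescoping products over $l$ in the tableau formula~(\ref{TSF_C}) map correctly onto those in Theorem~\ref{mainthm_C} after the change of variables $\theta_k=m_k+\phi_k$, $\theta_{\overline k}=\phi_k$ (note the reversal of the product range, since $l$ in~(\ref{TSF_C}) runs with $t^{n-l+1}$, matching $t^{n-l+1}$ in~(\ref{N=ippan_C})). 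This is entirely mechanical but error-prone; everything else — the uniqueness of solutions to the triangular system, the reduction to nonnegative monomials via $(\mathbb{Z}/2\mathbb{Z})^n$-symmetry, and the final invocation of Theorem~\ref{mainthm_C} — is a direct transcription of the $D_n$ argument already carried out in the proof of Theorem~\ref{main}.
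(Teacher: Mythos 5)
Your proposal is correct and follows essentially the same route as the paper's own proof: define $\Psi^{(C_n)}_{(r)}$ by the right-hand side of~(\ref{TSF_C}), use the triangular system coming from Lassalle's formula~(\ref{G_to_P_C}) at $T=t^2/q$ to characterize $P^{(C_n)}_{(r)}(x;q,t,t^2/q)$, reduce to nonnegative monomials by the $(\mathbb{Z}/2\mathbb{Z})^n$ symmetry, substitute $\theta_k=m_k+\phi_k$, $\theta_{\overline k}=\phi_k$, and identify the resulting coefficient identity with Theorem~\ref{mainthm_C}. The bookkeeping you flag as the main obstacle (matching the $i$-block and the products over $l$ with~(\ref{N=ippan_C})) indeed works out exactly as you describe, and is precisely what the paper verifies.
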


\begin{rem}
It would be an intriguing problem to show the factorization of the Macdonald polynomial $P_{(r)}^{(C_n)}(x;q,t,t^2/q)$
from our formula~(\ref{TSF_C})
when we  make the principal specialization
\begin{gather*}
P_{(r)}^{(C_n)}\big(\big(t^2/q\big)^{1/2} t^{n-1},\ldots,\big(t^2/q\big)^{1/2}t,\big(t^2/q\big)^{1/2};q,t,t^2/q\big)
=q^{r/2}t^{-r n}{(t^n;q)_r (t^{2(n+1)}/q^2;q)_r \over (t;q)_r (t^{n+1}/q;q)_r}.
\end{gather*}
\end{rem}

\begin{rem}
Setting $n=1$ in~(\ref{TSF_C}) we have,
\begin{gather*}
P_{(r)}^{(C_1)}\big(x;q,t,t^2/q\big) =
\sum_{\theta_1 = 0}^r\!
\frac{(t^2/q, q^{-r};q)_{\theta_1}}
{(q, t^{-2}q^{2-r};q)_{\theta_1}}
(q/t)^{2 \theta_1} x^{-r +2 \theta_1}=
x^{-r} \,{}_2\phi_1\!\left[ {t^2/q, q^{-r} \atop  t^{-2}q^{2-r}}\!;q,(q x/t)^2  \right].
\end{gather*}
Setting $n=2$ in~(\ref{TSF_C}) we have
\begin{gather*}
P_{(r)}^{(C_2)}\big(x;q,t,t^2/q\big) =
\frac{(q;q)_r}{(t;q)_r}
\sum_{\theta_1 + \theta_2 + \theta_{\overline{2}} + \theta_{\overline{1}} = r}
\frac{(t;q)_{\theta_1} (t;q)_{\theta_{\overline{1}}} (t;q)_{\theta_2} (t;q)_{\theta_{\overline{2}}} }
{(q;q)_{\theta_1} (q;q)_{\theta_{\overline{1}}} (q;q)_{\theta_2} (q;q)_{\theta_{\overline{2}}} }\nonumber \\
\hphantom{P_{(r)}^{(C_2)}\big(x;q,t,t^2/q\big) =}{}
 \times
\frac{(t^2 q^{\theta_1 + \theta_2 + \theta_{\overline{2}}},
t^3 q^{\theta_2 + \theta_{\overline{2}}-1};q)_{\theta_{\overline{1}}}}
{(t^3 q^{\theta_1 + \theta_2 + \theta_{\overline{2}}-1},
t^2 q^{\theta_2 + \theta_{\overline{2}}};q)_{\theta_{\overline{1}}}}
\frac{(t q^{\theta_2},
t^2/q;q)_{\theta_{\overline{2}}}}
{(t^2 q^{\theta_2 -1},
t;q)_{\theta_{\overline{2}}}}
 x_1^{\theta_1 - \theta_{\overline{1}}} x_2^{\theta_2 -\theta_{\overline{2}}}.
\end{gather*}
\end{rem}

\begin{proof}[Proof of Theorem~\ref{main_C}]
Let $\big\{\Psi^{(C_n)}_{(r)}(x;q,t,T)\big\}_{r\in \mathbb{Z}_{\geq 0}}$ be a certain collection of  Laurent polynomials.
By using Lassalle's formula~(\ref{G_to_P_C}),
it is proved by induction that the inf\/inite system of equalities for $\Psi^{(C_n)}_{(r)}(x;q,t,T)$
\begin{gather}
G_r(x;q,t) = \sum^{[r/2]}_{i = 0}
\frac{(t;q)_{r-2i}}{(q;q)_{r-2i}}\Psi_{(r-2i)}^{(C_n)}(x;q,t,T)T^i
\frac{(t/T;q)_i(t^n q^{r-2i};q )_i}{(q;q)_i(T t^{n-1} q^{r-2i +1};q)_i}\label{G_to_Psi_C}
\end{gather}
gives us $\Psi_{(r)}^{(C_n)}(x;q,t,T)=P_{(r)}^{(C_n)}(x;q,t,T)$, $r\in \mathbb{Z}_{\geq 0}$.
We use this argument with the specialization of the parameter $T = t^2/q$.

Set
\begin{gather*}
 \Psi_{(r)}^{(C_n)}\big(x;q,t,t^2/q\big)=
\frac{(q;q)_r}{(t;q)_r}
    \displaystyle\sum_{\theta_{1}+\theta_{2}+\cdots+
    \theta_{\overline{1}}=r}
\prod_{k \in I}
\frac{(t;q)_{\theta_{k}}}{(q;q)_{\theta_{k}}}
\\
\qquad{} \times \prod_{1\leq l\leq n}\frac{(t^{n-l+1}q^{\theta_{l}+
    \cdots+\theta_{\overline{l+1}}};q)_{\theta_{\overline{l}}}
    (t^{n-l+2}q^{\theta_{l+1}+\cdots+\theta_{\overline{l+1}}-1};q)_
    {\theta_{\overline{l}}}}
    {(t^{n-l+2}q^{\theta_{l}+\cdots+\theta_
    {\overline{l+1}}-1};q)_{\theta_{\overline{l}}}
    (t^{n-l+1}q^{\theta_{l+1}+\cdots+\theta_{\overline{l+1}}};q)_
    {\theta_{\overline{l}}}}
    x_1^{\theta_{1}-\theta_{\overline{1}}} x_2^{\theta_{2}-\theta_{\overline{2}}}\cdots
    x_n^{\theta_{n}-\theta_{\overline{n}}}.
\end{gather*}
We prove this family of  Laurent polynomials satisf\/ies~(\ref{G_to_Psi_C}) with the specialization $T = t^2/q$.
In view of the  $(\mathbb{Z}/2\mathbb{Z})^n$ symmetry of $\Psi^{(C_n)}_{(r)}(x;q,t,t^2/q)$,
it is suf\/f\/icient to consider in~(\ref{G_to_Psi_C}) the
coef\/f\/icients of the monomials $x_1^{m_1}\cdots x_n^{m_n}$
with nonnegative powers $m_1,\dots,m_n \in \mathbb{Z}_{\geq 0}$ only.
Let $r\in \mathbb{Z}_{\geq 0}$, and
f\/ix $m_1,\dots,m_n \in \mathbb{Z}_{\geq 0}$ arbitrarily. Set  $K := {1 \over 2}(r - m_1 - m_2 - \cdots - m_n)$ for simplicity.
Setting
\begin{gather*}
\theta_k  = m_k +\phi_k, \qquad \theta_{\overline{k}} =\phi_k,\qquad  1 \leq k \leq n,
\end{gather*}
one f\/inds that the
coef\/f\/icients of the monomials $x_1^{m_1}\cdots x_n^{m_n}$ in~(\ref{G_to_Psi_C})  is exactly given by l.h.s.\ of~(\ref{N=ippan_C}).
On the other hand, the coef\/f\/icients of the monomials $x_1^{m_1}\cdots x_n^{m_n}$ in~$G_r(x;q,t) $ is r.h.s.\ of~(\ref{N=ippan_C}).
Hence we have proved~(\ref{G_to_Psi_C}) with $T = t^2/q$,
which establishes the tableau formula $P^{(C_n)}_{(r)}(x;q,t,t^2/q)=\Psi^{(C_n)}_{(r)}(x;q,t,t^2/q)$.
\end{proof}

Finally, we present a tableau formula for the one-row Macdonald polynomials $P_{(r)}^{(C_n)}(x;q,t,T)$
with general parameters $(q,t,T)$.
\begin{thm}\label{main_C2}
Set $\theta := \min(\theta_{n}, \theta_{\overline{n}})$. We have
\begin{gather}
P_{(r)}^{(C_n)}(x;q,t,T)=
\frac{(q;q)_r}{(t;q)_r}
\sum_{\theta_{1}+\theta_{2}+\cdots+
    \theta_{\overline{1}}=r}
\prod_{k \in I {\setminus} \{ n, \overline{n} \} }
\frac{(t;q)_{\theta_{k}}}{(q;q)_{\theta_{k}}}
\frac{(t;q)_{|\theta_{n} - \theta_{\overline{n}}|}}{(q;q)_{|\theta_{n} - \theta_{\overline{n}}|}}
\notag
\\
\hphantom{P_{(r)}^{(C_n)}(x;q,t,T)=}{}\times
 \prod_{1\leq l\leq n-1} \Biggl(
\frac{
(t^{n-l-1}q^{\theta_{l}+
    \cdots+\theta_{n-1}+|\theta_{n} - \theta_{\overline{n}}|+\theta_{\overline{n-1}}+\cdots +
   \theta_{\overline{l+1}} +1};q)_{\theta_{\overline{l}}}
}
{
(t^{n-l}q^{\theta_{l}+\cdots+\theta_{n-1}+|\theta_{n} - \theta_{\overline{n}}|+\theta_{\overline{n-1}}
+\cdots+
\theta_
    {\overline{l+1}}};q)_{\theta_{\overline{l}}}
} \nonumber \\
\qquad\hphantom{P_{(r)}^{(C_n)}(x;q,t,T)=}{}\times
    \frac{
    (t^{n-l}q^{\theta_{l+1}+\cdots\theta_{n-1}+|\theta_{n} - \theta_{\overline{n}}|
   +\theta_{\overline{n-1}}+\cdots+\theta_{\overline{l+1}}};q)_
    {\theta_{\overline{l}}}}
{
    (t^{n-l-1}q^{\theta_{l+1}+\cdots+
\theta_{n-1}+|\theta_{n} - \theta_{\overline{n}}|+\theta_{\overline{n-1}}+\cdots
+\theta_{\overline{l+1}}+1};q)_
    {\theta_{\overline{l}}}} \Biggr) \nonumber \\
\hphantom{P_{(r)}^{(C_n)}(x;q,t,T)=}{}
\times
{(T;q)_{\theta}(t^n q^{r-2\theta};q)_{2\theta} \over (q;q)_{\theta}(T t^{n-1} q^{r-\theta};q )_{\theta}
(t^{n-1} q^{r-2\theta +1};q)_{\theta}}
    x_1^{\theta_{1}-\theta_{\overline{1}}} x_2^{\theta_{2}-\theta_{\overline{2}}}\cdots
    x_n^{\theta_{n}-\theta_{\overline{n}}}. \label{general-C}
\end{gather}
\end{thm}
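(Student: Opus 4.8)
The plan is to follow the strategy already used for Theorems~\ref{main} and~\ref{main_C}. Write $\Psi^{(C_n)}_{(r)}(x;q,t,T)$ for the right-hand side of~(\ref{general-C}) with $\Psi$ in place of $P$. By the converse part of Theorem~\ref{Lassalle_C} the inversion system~(\ref{G_to_P_C}) has a unique solution, so it suffices to check that the family $\{\Psi^{(C_n)}_{(r)}\}_{r\ge 0}$ satisfies it for arbitrary $T$; then $\Psi^{(C_n)}_{(r)}=P^{(C_n)}_{(r)}$. First I would note that $\Psi^{(C_n)}_{(r)}$ is $(\mathbb{Z}/2\mathbb{Z})^n$-symmetric: for $x_l$ with $1\le l\le n-1$ this follows from the same rewriting of the $l$-product as in Remark~\ref{rem_D}, while invariance under $x_n\leftrightarrow 1/x_n$ is manifest because $\theta_n,\theta_{\overline n}$ enter only through $|\theta_n-\theta_{\overline n}|$ and $\theta=\min(\theta_n,\theta_{\overline n})$. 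Hence it is enough to compare, for fixed $m_1,\dots,m_n\in\mathbb{Z}_{\ge 0}$, the coefficients of $x_1^{m_1}\cdots x_n^{m_n}$ on the two sides.

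Put $K:=\tfrac12(r-m_1-\cdots-m_n)$ and substitute $\theta_k=m_k+\phi_k$, $\theta_{\overline k}=\phi_k$ for $1\le k\le n$, so that $|\theta_n-\theta_{\overline n}|=m_n$, $\theta=\phi_n$, and the global constraint becomes $\phi_1+\cdots+\phi_n+i=K$ with $i$ the Lassalle index. A short bookkeeping shows that the $l$-products of~(\ref{general-C}) for $1\le l\le n-1$ become, after this substitution, exactly the interior products of Theorem~\ref{N=ippan}: in both cases the $q$-exponents are $\phi_l+2\phi_{l+1,n-1}+m_{l,n}$ and $\phi_l+2\phi_{l+1,n-1}+m_{l,n}+1$ (and their ``$l+1$'' variants), with the same powers $t^{n-l-1},t^{n-l}$. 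Moreover the part of $\Psi^{(C_n)}_{(r-2i)}$ depending on $\theta_n,\theta_{\overline n}$ is $\tfrac{(t;q)_{m_n}}{(q;q)_{m_n}}$ times the single factor $\tfrac{(T;q)_{\phi_n}(t^nq^{r-2i-2\phi_n};q)_{2\phi_n}}{(q;q)_{\phi_n}(Tt^{n-1}q^{r-2i-\phi_n};q)_{\phi_n}(t^{n-1}q^{r-2i-2\phi_n+1};q)_{\phi_n}}$, which carries no dependence on $\phi_1,\dots,\phi_{n-1}$. Setting $j:=K-(\phi_1+\cdots+\phi_{n-1})=\phi_n+i$ and using $r-2i-2\phi_n=r-2j$, collecting the two innermost summations reduces the coefficient comparison to the single terminating identity
\begin{gather*}
S_j:=\sum_{\phi_n+i=j}\frac{(T;q)_{\phi_n}\,(t^nq^{r-2j};q)_{2\phi_n}}{(q;q)_{\phi_n}(Tt^{n-1}q^{r-2j+\phi_n};q)_{\phi_n}(t^{n-1}q^{r-2j+1};q)_{\phi_n}}\,T^{i}\,\frac{(t/T;q)_i\,(t^nq^{r-2i};q)_i}{(q;q)_i\,(Tt^{n-1}q^{r-2i+1};q)_i}\\
=\frac{(t;q)_j\,(t^nq^{r-2j};q)_j}{(q;q)_j\,(t^{n-1}q^{r-2j+1};q)_j},
\end{gather*}
whose right-hand side is precisely the kernel in the left-hand side of~(\ref{sono6}).

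Granting $S_j$, the rest is immediate: substituting it for the two inner sums turns the left-hand side of the coefficient identity into the left-hand side of~(\ref{sono6}) with parameters $K,m_1,\dots,m_n$, and Theorem~\ref{N=ippan} then delivers $\sum_{\phi_1+\cdots+\phi_n=K}\prod_j\tfrac{(t;q)_{\phi_j}(t;q)_{\phi_j+m_j}}{(q;q)_{\phi_j}(q;q)_{\phi_j+m_j}}$, which is exactly the coefficient of $x_1^{m_1}\cdots x_n^{m_n}$ in $G_r(x;q,t)$. (For $n=1$ there are no interior products and the identity to be proven is literally $S_j=\dots$, i.e.\ the $n=1$ case of~(\ref{general-C}).) The one genuinely new ingredient, and the step I expect to be the main obstacle, is the evaluation of $S_j$: after eliminating $i=j-\phi_n$ and writing $a:=t^{n-1}q^{r-2j}$ the summand acquires the very-well-poised factor $\tfrac{1-Taq^{2\phi_n}}{1-Ta}$, so $S_j$ is a terminating very-well-poised series, which I would evaluate by matching parameters in Jackson's terminating ${}_8W_7$ ($q$-Dougall) summation, or alternatively via the ${}_{12}W_{11}\to{}_4\phi_3$ transformation of Theorem~\ref{koushiki-2} followed by $q$-Saalsch\"utz, as was done for the companion sums in Section~\ref{section3}. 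The remaining effort is the routine but lengthy exponent bookkeeping needed to confirm that the ``$\theta_n+\theta_{\overline n}\mapsto|\theta_n-\theta_{\overline n}|$'' repackaging of~(\ref{general-C}) is compatible with the general-$T$ Lassalle kernel.
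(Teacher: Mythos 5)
Your proposal is correct and follows essentially the same route as the paper's proof: the Lassalle inversion~(\ref{G_to_Psi_C}) plus the $(\mathbb{Z}/2\mathbb{Z})^n$ symmetry reduce everything to a coefficient identity, the inner double sum over $\theta_{\overline{n}}$ and the Lassalle index $i$ is collapsed by a terminating very-well-poised summation, and Theorem~\ref{N=ippan} finishes the argument. The sum $S_j$ you isolate is precisely the ${}_{6}W_{5}\bigl(Tt^{n-1}q^{r-2j};T,t^{n}q^{r-j},q^{-j};q,q/t\bigr)$ that the paper evaluates with the ${}_6\phi_5$ summation formula \cite[p.~34, equation~(2.4.2)]{GR} (a special case of the Jackson sum you name), after which the $T$-dependence cancels exactly as you predicted.
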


\begin{rem}
It would be an intriguing problem to show the factorization of the Macdonald polynomial $P_{(r)}^{(C_n)}(x;q,t,T)$
from our formula~(\ref{general-C})
when we make the principal specialization:
\begin{gather*}
P_{(r)}^{(C_n)}\big(T^{1/2} t^{n-1},\ldots,T^{1/2}t,T^{1/2};q,t,T\big)
=T^{-r/2} t^{-r (n-1)}{(t^n;q)_r (t^{2(n-1)}T;q)_r \over (t;q)_r (t^{n}T;q)_r}.
\end{gather*}
\end{rem}

\begin{proof}[Proof of Theorem~\ref{main_C2}]
We prove that the system of equalities~(\ref{G_to_Psi_C}) is satisf\/ied by
the following Laurent polynomials
\begin{gather*}
 \Psi_{(r)}^{(C_n)}(x;q,t,T)=
\frac{(q;q)_r}{(t;q)_r}
    \displaystyle\sum_{\theta_{1}+\theta_{2}+\cdots+
    \theta_{\overline{1}}=r}
\prod_{k \in I {\setminus} \{ n, \overline{n} \} }
\frac{(t;q)_{\theta_{k}}}{(q;q)_{\theta_{k}}}
\frac{(t;q)_{|\theta_{n} - \theta_{\overline{n}}|}}{(q;q)_{|\theta_{n} - \theta_{\overline{n}}|}}
\\
\hphantom{\Psi_{(r)}^{(C_n)}(x;q,t,T)=}{}
\times \prod_{1\leq l\leq n-1}\Biggl(
\frac{
(t^{n-l-1}q^{\theta_{l}+
    \cdots+\theta_{n-1}+|\theta_{n} - \theta_{\overline{n}}|+\theta_{\overline{n-1}}+\cdots +
   \theta_{\overline{l+1}} +1};q)_{\theta_{\overline{l}}}
}
{
(t^{n-l}q^{\theta_{l}+\cdots+\theta_{n-1}+|\theta_{n} - \theta_{\overline{n}}|+\theta_{\overline{n-1}}
+\cdots+
\theta_
    {\overline{l+1}}};q)_{\theta_{\overline{l}}}
}  \\
\qquad\hphantom{\Psi_{(r)}^{(C_n)}(x;q,t,T)=}{}
\times
\frac{
    (t^{n-l}q^{\theta_{l+1}+\cdots\theta_{n-1}+|\theta_{n} - \theta_{\overline{n}}|
  +\theta_{\overline{n-1}}+\cdots+\theta_{\overline{l+1}}};q)_
    {\theta_{\overline{l}}}
}
{
    (t^{n-l-1}q^{\theta_{l+1}+\cdots+
\theta_{n-1}+|\theta_{n} - \theta_{\overline{n}}|+\theta_{\overline{n-1}}+\cdots
+\theta_{\overline{l+1}}+1};q)_
    {\theta_{\overline{l}}}}\Biggr) \\
\hphantom{\Psi_{(r)}^{(C_n)}(x;q,t,T)=}{}\times
{(T;q)_{\theta}(t^n q^{r-2\theta};q)_{2\theta} \over (q;q)_{\theta}(T t^{n-1} q^{r-\theta};q )_{\theta}
(t^{n-1} q^{r-2\theta +1};q)_{\theta}}
    x_1^{\theta_{1}-\theta_{\overline{1}}} x_2^{\theta_{2}-\theta_{\overline{2}}}\cdots
    x_n^{\theta_{n}-\theta_{\overline{n}}}.
\end{gather*}
Note that these $\Psi^{(C_n)}_{(r)}(x;q,t,T)$ have the $(\mathbb{Z}/2\mathbb{Z})^n$ symmetry.

Let $r\in \mathbb{Z}_{\geq 0}$, and
f\/ix $m_1,\ldots,m_n \in \mathbb{Z}_{\geq 0}$ arbitrarily.
We study  the coef\/f\/icient of the monomial $  x_1^{m_1} x_2^{m_2}\cdots
    x_n^{m_n}$ in  r.h.s.\ of (\ref{G_to_Psi_C}),
namely  we consider the case
$\theta_k - \theta_{\overline{k}} = m_k \geq 0$ $(1 \leq k \leq n)$.
Set $K := {1 \over 2}(r - m_1 - m_2 - \cdots - m_n)$,
$\theta_{l,m}:=\sum\limits_{k=l}^{m}\theta_{\overline{k}}$,  $ \check{I}:=I {\setminus} \{ n, \overline{n} \}$ for simplicity.
Then the coef\/f\/icient of the monomial $  x_1^{m_1} x_2^{m_2}\cdots
    x_n^{m_n}$ in  r.h.s.\ of~(\ref{G_to_Psi_C}) is expressed as follows
\begin{gather}
\sum_{\theta_1+\theta_2+\cdots+\theta_{\overline{1}}+2i=r}
\prod_{k \in \check{I}}
\frac{(t;q)_{\theta_{k}}}{(q;q)_{\theta_{k}}}
\frac{(t;q)_{m_n}}{(q;q)_{m_n}}
\notag
\\
\qquad\quad{} \times \prod_{1\leq l\leq n-1}\frac{
(t^{n-l-1}q^{\theta_{\overline{l}} + 2\theta_{l,n-1}
+m_{l,n} +1};q)_{\theta_{\overline{l}}}
    (t^{n-l}q^{2\theta_{l+1,n-1}+m_{l+1,n}};q)_
    {\theta_{\overline{l}}}}
{
(t^{n-l}q^{\theta_{\overline{l}}+2\theta_{l+1,n-1} + m_{l,n}};q)_{\theta_{\overline{l}}}
    (t^{n-l-1}q^{2\theta_{l+1,n-1} + m_{l+1,n}+1};q)_
    {\theta_{\overline{l}}}} \notag \\
\qquad\quad{}\times
{(T;q)_{\theta}(t^n q^{r-2\theta};q)_{2\theta} \over (q;q)_{\theta}(T t^{n-1} q^{r-\theta};q )_{\theta}
(t^{n-1} q^{r-2\theta +1};q)_{\theta}}
{(t/T;q)_i (t^n q^{r-2i};q)_i \over (q;q)_i (Tt^{n-1}q^{r-2i+1};q)_i}T^i \nonumber\\
\qquad{}=
\sum_{\theta_{1,n-1}=0}^{K} \sum_{\theta_{\overline{n}}=0}^{K-\theta_{1,n-1}}
\prod_{k \in \check{I}}
\frac{(t;q)_{\theta_{k}}}{(q;q)_{\theta_{k}}}
\frac{(t;q)_{m_n}}{(q;q)_{m_n}}
\notag
\\
\qquad\quad{} \times \prod_{1\leq l\leq n-1}\frac{
(t^{n-l-1}q^{\theta_{\overline{l}} + 2\theta_{l,n-1}
+m_{l,n} +1};q)_{\theta_{\overline{l}}}
    (t^{n-l}q^{2\theta_{l+1,n-1}+m_{l+1,n}};q)_
    {\theta_{\overline{l}}}}
{
(t^{n-l}q^{\theta_{\overline{l}}+2\theta_{l+1,n-1} + m_{l,n}};q)_{\theta_{\overline{l}}}
    (t^{n-l-1}q^{2\theta_{l+1,n-1} + m_{l+1,n}+1};q)_
    {\theta_{\overline{l}}}} \notag \\
\qquad\quad{}\times
{(T;q)_{\theta_{\overline{n}}}(t^n q^{2\theta_{1,n-1} + m_{1,n}};q)_{2\theta_{\overline{n}}}
\over
 (q;q)_{\theta_{\overline{n}}}(T t^{n-1} q^{2\theta_{1,n-1} + m_{1,n} +\theta_{\overline{n}}};q )_{\theta_{\overline{n}}}
(t^{n-1} q^{2\theta_{1,n-1} +m_{1,n} +1};q)_{\theta_{\overline{n}}}} \notag \\
\qquad\quad{}\times
{(t/T;q)_{K-\theta_{1,n-1} - \theta_{\overline{n}}} (t^n q^{2\theta_{1,n-1} + m_{1,n} +
2\theta_{\overline{n}}};q)_{K-\theta_{1,n-1} - \theta_{\overline{n}}}
\over
(q;q)_{K-\theta_{1,n-1} - \theta_{\overline{n}}}
(Tt^{n-1}q^{2\theta_{1,n-1} + m_{1,n} + 2\theta_{\overline{n}} +1};q)_{K-\theta_{1,n-1} - \theta_{\overline{n}}}}
T^{K-\theta_{1,n-1} - \theta_{\overline{n}}}\nonumber\\
\qquad {}=
\sum_{\theta_{1,n-1}=0}^{K}
\prod_{k \in \check{I}}
\frac{(t;q)_{\theta_{k}}}{(q;q)_{\theta_{k}}}
\frac{(t;q)_{m_n}}{(q;q)_{m_n}}
\notag
\\
\qquad\quad{} \times \prod_{1\leq l\leq n-1}\frac{
(t^{n-l-1}q^{\theta_{\overline{l}} + 2\theta_{l,n-1}
+m_{l,n} +1};q)_{\theta_{\overline{l}}}
    (t^{n-l}q^{2\theta_{l+1,n-1}+m_{l+1,n}};q)_
    {\theta_{\overline{l}}}}
{
(t^{n-l}q^{\theta_{\overline{l}}+2\theta_{l+1,n-1} + m_{l,n}};q)_{\theta_{\overline{l}}}
    (t^{n-l-1}q^{2\theta_{l+1,n-1} + m_{l+1,n}+1};q)_
    {\theta_{\overline{l}}}} \notag \\
\qquad\quad{}\times
{(t/T;q)_{K - \theta_{1,n-1}}
(t^n q^{2\theta_{1,n-1} + m_{1,n}};q)_{K - \theta_{1,n-1}}
\over
 (q;q)_{K - \theta_{1,n-1}}
(T t^{n-1} q^{2\theta_{1,n-1} + m_{1,n} +1};q )_{K - \theta_{1,n-1}}}
T^{K - \theta_{1,n-1}} \notag \\
\qquad\quad{}\times
{}_{6}W_{5}\big(Tt^{n-1}q^{2\theta_{1,n-1} +m_{1,n}}, T, t^n q^{\theta_{1,n-1} +m_{1,n}+K},
q^{-K+\theta_{1,n-1}};q,q/t\big).
    \label{main_C2_2}
\end{gather}
By the summation formula for ${}_{6}\phi_{5}$ series
\cite[p.~34, equation~(2.4.2)]{GR}, we have
\begin{gather*}
{}_{6}W_{5} \text{ series in  (\ref{main_C2_2})}
=
{(T t^{n-1} q^{2\theta_{1,n-1} + m_{1,n} +1};q )_{K - \theta_{1,n-1}}
(t^{-1} q^{-K+\theta_{1,n} +1};q)_{K - \theta_{1,n-1}}
\over
(t^{n-1} q^{2\theta_{1,n-1} + m_{1,n}+1};q)_{K - \theta_{1,n-1}}
(T t^{-1} q^{-K+\theta_{1,n-1} -1}  ;q)_{K - \theta_{1,n-1}}}
.
\end{gather*}
Note that the dependence on the parameter $T$ in~(\ref{main_C2_2}) disappears by the cancelation as
\begin{gather*}
{(t/T;q)_{K - \theta_{1,n-1}} (t^{-1} q^{-K + \theta_{1,n-1} +1};q)_{K - \theta_{1,n-1}}
\over
(T t^{-1} q^{-K+\theta_{1,n-1} -1}  ;q)_{K - \theta_{1,n-1}}
}
T^{K - \theta_{1,n-1}} = (t;q)_{K - \theta_{1,n-1}}.
\end{gather*}
Hence we have recast
the coef\/f\/icient of  the monomial $  x_1^{m_1} x_2^{m_2}\cdots
    x_n^{m_n}$ in  r.h.s.\ of~(\ref{G_to_Psi_C})
as
\begin{gather*}
\sum_{\theta_{1,n-1}=0}^{K}
\prod_{k \in \check{I}}
\frac{(t;q)_{\theta_{k}}}{(q;q)_{\theta_{k}}}
\frac{(t;q)_{m_n}}{(q;q)_{m_n}}
\notag
\\
\qquad{}\times \prod_{1\leq l\leq n-1}\frac{
(t^{n-l-1}q^{\theta_{\overline{l}} + 2\theta_{l,n-1}
+m_{l,n} +1};q)_{\theta_{\overline{l}}}
    (t^{n-l}q^{2\theta_{l+1,n-1}+m_{l+1,n}};q)_
    {\theta_{\overline{l}}}}
{
(t^{n-l}q^{\theta_{\overline{l}}+2\theta_{l+1,n-1} + m_{l,n}};q)_{\theta_{\overline{l}}}
    (t^{n-l-1}q^{2\theta_{l+1,n-1} + m_{l+1,n}+1};q)_
    {\theta_{\overline{l}}}} \notag \\
\qquad{}\times
{(t;q )_{K - \theta_{1,n-1}}
(t^{n} q^{2\theta_{1,n-1} + m_{1,n}};q)_{K - \theta_{1,n-1}}
\over
(q;q )_{K - \theta_{1,n-1}}
(t^{n-1} q^{2\theta_{1,n-1} + m_{1,n}+1};q)_{K - \theta_{1,n-1}}}
\end{gather*}
Changing the running indices as $\phi_k=\theta_{\overline{k}}$,
one f\/inds that this is
nothing but l.h.s.\  of~(\ref{sono6}). Then Theorem~\ref{N=ippan} means that this is the
coef\/f\/icient of the monomial $  x_1^{m_1} x_2^{m_2}\cdots
x_n^{m_n}$ in  $G_r(x;q,t)$.
\end{proof}

\section[Deformed $\mathcal{W}$ algebras of types $C_l$ and $D_l$]{Deformed $\boldsymbol{\mathcal{W}}$ algebras of types $\boldsymbol{C_l}$ and $\boldsymbol{D_l}$}\label{section7}

In this section, we study a relation between the tableau formulas for Macdonald polynomials
of types~$C_l$ and~$D_l$ and the deformed~$\mathcal{W}$ algebras of types~$C_l$ and~$D_l$.
We brief\/ly recall the def\/inition of the deformed~$\mathcal{W}$ algebras
of types~$C_l$ and~$D_l$~\cite{FR}.

Let $\{ \alpha_1, \alpha_2, \dots, \alpha_l \}$ and
$\{ \omega_1, \omega_2, \dots, \omega_l \}$ be the sets of simple roots and
of fundamental weights of a simple Lie algebra $\mathfrak{g}$ of rank~$l$.
Let $(\cdot, \cdot)$ be the invariant inner product on $\mathfrak{g}$ and
$C = (C_{i,j})_{1 \leq i, j \leq l}$  the {\it Cartan matrix} where
$C_{i, j} = 2(\alpha_i, \alpha_j) / (\alpha_i, \alpha_i)$.
Let $r^{\vee}$ be the maximal number of edges connecting two vertices of
the Dynkin diagram of $\mathfrak{g}$ and set $D=\operatorname{diag}(r_1, r_2, \dots, r_l)$
where $r_i = r^{\vee} (\alpha_i, \alpha_i) / 2$. Denote by $I = (I_{i,j})_{1 \leq i,j \leq l}$
the incidence matrix where $I_{i,j} = 2\delta_{i,j} - C_{i,j}$.
Let $B = (B_{i,j})_{1 \leq i,j \leq l} = DC$ ({i.e.} $B_{i,j}=r^{\vee}(\alpha_i, \alpha_i)$).
We def\/ine $l \times l$ matrices $C(q,t)$, $D(q,t)$, $B(q,t)$ and $M(q,t)$ as follows
\begin{gather}
C_{i,j}(q,t)  = \big(q^{r_i}t^{-1} + q^{-r_i}t\big)\delta_{i,j} - [I_{i,j}]_{q}, \nonumber \\
D(q,t)  = \operatorname{diag}([r_1]_q, [r_2]_q, \dots, [r_l]_q), \nonumber \\
B(q,t)  = D(q,t)C(q,t), \nonumber \\
M(q,t)  = D(q,t)C(q,t)^{-1}  = D(q,t)B(q,t)^{-1}D(q,t), \label{Mqt}
\end{gather}
where we use the standard notation $[n]_q = {q^n - q^{-n} \over q - q^{-1}}$.

Let $\mathcal{H}_{q,t}$ be the Heisenberg algebra with generators
$a_i[n]$ and $y_i[n]$ ($i = 1,2, \dots, l$; $n \in \mathbb{Z}$) with the following relations
\begin{gather}
[a_i[n], a_j[m]]  = {1 \over n}\big(q^{r_i n} - q^{-r_i n}\big)\big(t^n - t^{-n}\big)B_{i,j}\big(q^n, t^n\big)\delta_{n,-m},  \nonumber \\
[a_i[n], y_j[m]]  = {1 \over n}\big(q^{r_i n} - q^{-r_i n}\big)\big(t^n - t^{-n}\big)\delta_{i,j}\delta_{n,-m},  \nonumber \\
[y_i[n], y_j[m]]  = {1 \over n}\big(q^{r_i n} - q^{-r_i n}\big)\big(t^n - t^{-n}\big)M_{i,j}\big(q^n, t^n\big)\delta_{n,-m}. \label{y-y}
\end{gather}
Note that we have
\begin{gather*}
a_j[n] =  \sum_{i=1}^{l}C_{i,j}\big(q^n, t^n\big)y_i[n].
\end{gather*}

Introduce the generating series:
\begin{gather*}
Y_i(z) := {:}\text{exp}\biggl(\sum_{m \ne 0}y_i[m]z^{-m} \biggr){:}.
\end{gather*}
Here we have used the standard notation for the normal ordering ${:}\cdots{:}$ for the
Heisenberg generators def\/ined as follows.  We call the negative modes $a_i[-n]$, $y_i[-n]$ ($n >0$)  creation operators and
positive modes~$a_i[n]$, $y_i[n]$ ($n >0$)  annihilation operators.
Then the normal ordered product  ${:}{\mathcal O}{:}$ of an operator~$\mathcal O$ is
obtained by moving all the creation operators to the left and the annihilation operators to the right.
For example we have
 \begin{gather*}
Y_i(z) = \exp\biggl(\sum_{m >0}y_i[-m]z^{m} \biggr)
 \exp \biggl(\sum_{m >0}y_i[m]z^{-m} \biggr) .
\end{gather*}

\begin{rem}
In \cite{FR}, suitable zero mode factors are included in the generating series
$Y_i(z)$ to ensure reasonable commutation relations with the screening operators.
In this paper, however, we omit writing them since we do not need any arguments
based on the screening operators.
\end{rem}

We def\/ine a set $J$ and f\/ields $\Lambda_i(z)=\Lambda^{(X)}_i(z)$, $i \in J$, for $X=C_l$ and $D_l$.
\begin{itemize}\itemsep=0pt
\item[(i)] The $C_l$ series. $J:= \{1,2,\dots, l, \overline{l}, \overline{l-1}, \dots, \overline{1}  \}$,
\begin{gather}
\Lambda_i(z)  := {:}Y_i\big(zq^{-i+1}t^{i-1}\big)Y_{i-1}\big(zq^{-i}t^i\big)^{-1}{:}, \qquad i=1, 2, \dots, l,  \nonumber \\
\Lambda_{\overline{i}}(z)  := {:}Y_{i-1}\big(zq^{-2l+i-2}t^{2l-i}\big)Y_{i}\big(zq^{-2l+i-3}t^{2l-i+1}\big)^{-1}{:}, \qquad i=1, 2, \dots, l. \label{Lam-C}
\end{gather}
\item[(ii)] The $D_l$ series. $J:= \{1,2,\dots, l, \overline{l}, \overline{l-1}, \dots, \overline{1}  \}$,
\begin{gather}
\Lambda_i(z)  := {:}Y_i\big(zq^{-i+1}t^{i-1}\big)Y_{i-1}\big(zq^{-i}t^i\big)^{-1}{:}, \qquad i=1, 2, \dots, l-2, \nonumber \\
\Lambda_{l-1}(z) := {:}Y_l\big(zq^{-l+2}t^{l-2}\big)Y_{l-1}\big(zq^{-l+2}t^{l-2}\big)Y_{l-2}\big(zq^{-l+1}t^{l-1}\big){:}, \nonumber \\
\Lambda_{l}(z)  := {:}Y_l\big(zq^{-l+2}t^{l-2}\big)Y_{l-1}\big(zq^{-l}t^{l}\big)^{-1}{:}, \nonumber \\
\Lambda_{\overline{l}}(z)  :=  {:}Y_{l-1}\big(zq^{-l+2}t^{l-2}\big)Y_{l}\big(zq^{-l}t^{l}\big)^{-1}{:}, \nonumber \\
\Lambda_{\overline{l-1}}(z) := {:}Y_{l-2}\big(zq^{-l+1}t^{l-1}\big)Y_{l-1}\big(zq^{-l}t^{l}\big)^{-1}Y_{l}\big(zq^{-l}t^{l}\big){:}, \nonumber \\
\Lambda_{\overline{i}}(z)  := {:}Y_{i-1}\big(zq^{-2l+i+2}t^{2l-i-2}\big)Y_{i}\big(zq^{-2l+i+1}t^{2l-i-1}\big)^{-1}{:}, \qquad i=1, 2, \dots, l-2. \!\!\!\label{Lam-D}
\end{gather}
\end{itemize}
\begin{dfn}[\cite{FR}]
Def\/ine the f\/irst generating f\/ields $T^{(X)}(x,z)$
of the deformed~$\mathcal{W}$
algebras of type $X=C_l$ or $D_l$ with the independent indeterminates $x = (x_1, x_2, \dots, x_l)$:
\begin{gather}
T^{(X)}(x,z):=x_1\Lambda_1(z)+\cdots+x_l\Lambda_l(z)+\frac{1}
{x_l}\Lambda_{\bar{l}}(z)+\cdots+
\frac{1}{x_1}\Lambda_{\bar{1}}(z). \label{TX}
\end{gather}
\end{dfn}

\begin{rem}
It is not an easy task to def\/ine the deformed $\mathcal{W}$ algebras purely
in terms of the generators and relations,
except for some simple cases  such as the deformed Virasoro algebra.
One of the simplest bypass ways is to regard the deformed~$\mathcal{W}$ as the algebra generated by
the~$T^{(X)}(x,z)$ given in terms of the Heisenberg generators.
\end{rem}

\begin{rem}
The $x_i$'s $(i = 1,2, \dots, l)$  correspond to
the zero mode factors, and they paramet\-ri\-ze the highest weight condition
for the representation of the~$\mathcal{W}$ algebras.
\end{rem}

\begin{lem}\label{OPE} For $C_l$ and $D_l$ series
we obtain the following operator product expansions
\begin{gather*}
 f(w/z)\Lambda_i(z)\Lambda_j(w)=\gamma_{i,j}(z,w)\, {:}\Lambda_i(z)\Lambda_j(w){:},
\end{gather*}
where
\begin{gather*}
f(z) = f^{(X)}(z) := \exp \left(-\sum_{n=1}^{\infty}(q^n - q^{-n})(t^n - t^{-n})M_{1,1}(q^n, t^n)z^n \right),
\end{gather*}
and $\gamma_{i,j}(z,w) = \gamma_{i,j}^{(X)}(z,w) $ for $X = C_l $ or $ D_l$ given by
\begin{gather*}
\gamma_{i,j}^{(C_l)}(z,w)
=\begin{cases}
1,& i=j, \\
\gamma(w/z),& i \prec j,\ j\neq \bar{i}, \\
\gamma(z/w),& i \succ j, \ \bar{j}\neq i, \\
\gamma(w/z)\gamma\big(q^{2i-2l-2}t^{-2i+2l}w/z\big),& i \prec j,\ j=\bar{i}, \\
\gamma(z/w)\gamma\big(q^{-2i+2l+2}t^{2i-2l}z/w\big),&  i \succ j, \ \bar{j}=i, \\
\end{cases} \\
\gamma_{i,j}^{(D_l)}(z,w)
=\begin{cases}
1& i=j, \\
\gamma(w/z),& i \prec j, \ j\neq \bar{i}, \\
\gamma(z/w),& i \succ j, \ \bar{j}\neq i, \\
\gamma(w/z)\gamma\big(q^{2i-2l+2}t^{-2i+2l-2}w/z\big),&  i \prec j, \ j=\bar{i}, \\
\gamma(z/w)\gamma\big(q^{-2i+2l-2}t^{2i-2l+2}z/w\big),& i \succ j, \ \bar{j}=i . \\
\end{cases}
\end{gather*}
Here we have use the notation
$\gamma(z)=\frac{(1-t^2z)(1-z/q^2)}
{(1-z)(1-zt^2/q^2)}$.
\end{lem}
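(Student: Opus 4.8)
The plan is to prove Lemma~\ref{OPE} by a direct vertex-operator calculation, the only input being the elementary contraction rule for exponentials of Heisenberg generators: if $A=\sum_{m>0}a[m]z^{-m}$ and $B=\sum_{n>0}b[-n]w^{n}$ have central commutator, then $e^{A}e^{B}=e^{B}e^{A}e^{[A,B]}$, so from the relations~\eqref{y-y} one obtains, for the vertex operators $Y_a(z)={:}\exp(\sum_{m\ne0}y_a[m]z^{-m}){:}$ and any $\varepsilon,\delta\in\{+1,-1\}$,
\[
Y_a(z)^{\varepsilon}Y_b(w)^{\delta}=\exp\!\Bigl(\varepsilon\delta\!\sum_{m>0}\frac{(q^{r_am}-q^{-r_am})(t^m-t^{-m})}{m}\,M_{a,b}(q^m,t^m)(w/z)^m\Bigr){:}Y_a(z)^{\varepsilon}Y_b(w)^{\delta}{:}.
\]
Here $f(w/z)^{-1}$ is, by the very definition of $f$, the self-contraction function $Y_a(z)Y_a(w)/{:}Y_a(z)Y_a(w){:}$ governed by $M_{1,1}$ (this being independent of $a$), while the finite product $\gamma(z)=\frac{(1-t^{2}z)(1-z/q^{2})}{(1-z)(1-zt^{2}/q^{2})}$ is what $-\log(1-\zeta)=\sum_{m>0}\zeta^{m}/m$ produces out of a residual combination of nearest-neighbour and diagonal entries of $M(q,t)=D(q,t)C(q,t)^{-1}$ evaluated at shifted arguments.

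Next I would feed in the definitions~\eqref{Lam-C} and~\eqref{Lam-D}. Each field $\Lambda_i(z)$ is a normal-ordered product of at most three $Y_a$'s evaluated at arguments of the form $z\,q^{\ast}t^{\ast}$, so the product $\Lambda_i(z)\Lambda_j(w)$ is governed by at most nine elementary $Y$-$Y$ contractions, each contributing a factor of the type above with $w/z$ replaced by the appropriate quotient of the shifted parameters. I would collect all these factors and verify the five cases of the statement; by the symmetry under $z\leftrightarrow w$, $i\leftrightarrow j$ it is enough to settle $i=j$, the generic case $i\prec j$ with $j\ne\bar{i}$, and the special case $i\prec j$ with $j=\bar{i}$. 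I would do type $C_l$ first, where every $\Lambda_i$ is a product of two $Y$'s and the only complication is the long node $l$ (where $r_l=r^{\vee}=2$), and then type $D_l$, where the fork at the nodes $l-1,l$ makes $\Lambda_{l-1},\Lambda_{l},\Lambda_{\bar l},\Lambda_{\overline{l-1}}$ products of three $Y$'s and forces a separate inspection of the pairs involving these four fields.

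The heart of the argument — and its main obstacle — is the bookkeeping of the spectral-parameter shifts, together with two structural checks. First, the \emph{universal} part of each OPE, i.e.\ the part independent of the particular pair $(i,j)$, must collapse to exactly $f(w/z)^{-1}$; this is the deformed-Virasoro-type phenomenon that makes $T^{(X)}(x,z)$ behave well, and it relies on the defining identity $M(q,t)C(q,t)=D(q,t)$, which lets the telescoping sums of $M$-entries reduce to the single entry $M_{1,1}$ and at the same time cancels the $r^{\vee}$-dependence coming from the long node of $C_l$. Second, one must see that the residual factor is precisely $\gamma_{i,j}(z,w)$: in the generic orderings only one factor $\gamma(w/z)$ (resp.\ $\gamma(z/w)$) survives, whereas when $j=\bar{i}$ the \emph{inner} $Y$'s of $\Lambda_i(z)$ and $\Lambda_{\bar i}(w)$ sit at the turn-around point of the chain and produce one extra contraction, which is exactly what yields the additional factor $\gamma(q^{2i-2l-2}t^{-2i+2l}w/z)$ for $C_l$ and $\gamma(q^{2i-2l+2}t^{-2i+2l-2}w/z)$ for $D_l$ (with the mirror statement for $i\succ j$, $\bar j=i$). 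Once the shift exponents are tabulated for each relevant pair in both types, the identities follow by inspection; no deeper ingredient is needed, only patience with the $q,t$-powers and the fork fields of $D_l$.
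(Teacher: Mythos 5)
Your proposal follows essentially the same route as the paper: the paper's entire proof of Lemma~\ref{OPE} is the remark that it ``can be obtained by straightforward but pretty lengthy calculations using (\ref{Mqt}), (\ref{y-y}), (\ref{Lam-C}), (\ref{Lam-D})'', i.e.\ precisely the normal-ordering/contraction computation you outline, with all details omitted. Your plan correctly isolates the two structural points that make the bookkeeping come out (the collapse of the pair-independent part to $f(w/z)^{-1}$ via $M(q,t)C(q,t)=D(q,t)$, and the extra contraction at the turn-around of the chain producing the second $\gamma$-factor when $j=\bar{i}$), so it is at least as detailed as what the paper provides; the only caveat is that the self-contraction of a single $Y_a$ is \emph{not} literally independent of $a$ --- what is independent of $i$ is the full $\Lambda_i$--$\Lambda_i$ contraction after the telescoping you describe.
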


A proof of Lemma \ref{OPE} can be obtained by straightforward but pretty lengthy calculations using
(\ref{Mqt}), (\ref{y-y}), (\ref{Lam-C}), (\ref{Lam-D}).  Therefore we safely can omit the detail.

Let $| 0 \rangle$ be the vacuum vector satisfying
the annihilation conditions $a_i[n] | 0 \rangle = 0$ for all~$i $ and $n > 0$.
Let $\mathcal{F}$ be the Fock module obtained by
inducing up the one dimensional representation~${\mathbb C}| 0 \rangle$ of the algebra of
annihilation operators to the whole Heisenberg algebra.
Then one can check that the Fourier modes of the generator $T^{(X)}(x,z)$ acting on $\mathcal{F}$ are well def\/ined.
Let~$\langle 0 |$ to be the dual vacuum satisfying $\langle 0 |a_i[-n] = 0$ for all $i $ and $n > 0$.

Now we consider the correlation functions
$\langle 0|T(x,z_1)T(x,z_2)\cdots T(x,z_r)|0\rangle$
of types~$C_l$ and~$D_l$
with the normalization
$\langle 0 |\Lambda_i(z)| 0 \rangle = 1$.

\begin{prp}\label{CorrGamma}
Let $X=C_l$ or $D_l$.
Set $I_l := \{ 1,2,\dots,l,\overline{l}, \overline{l-1},\dots,\overline{1} \}$.
Let $x_1,\dots,x_l$ be indeterminates and set
$x_{\overline{i}}=1/x_i$, $1\leq i\leq l$.
We have
\begin{gather*}
\prod_{i<j}f^{(X)}(z_j/z_i) \cdot
\langle 0|T^{(X)}(x,z_1)T^{(X)}(x,z_2)\cdots T^{(X)}(x,z_r)|0 \rangle\\
\qquad{} =F^{(X)}(x_1,\dots,x_l|z_i,\dots,z_r|q,t),
\end{gather*}
where
\begin{gather*}
F^{(X)}(x_1,\dots,x_l|z_i,\dots,z_r|q,t)
 =
\sum_{\varepsilon_1, \varepsilon_2, \dots, \varepsilon_r \in
I_l} x_{\varepsilon_1}x_{\varepsilon_2} \cdots x_{\varepsilon_r}
\prod_{1 \leq i < j \leq r}
\gamma_{\varepsilon_i, \varepsilon_j}(z_{i}, z_{j}).
\end{gather*}
\end{prp}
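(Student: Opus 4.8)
The plan is to reduce the correlation function to a product of two--point contractions by the standard vertex--operator calculus, and then read off each contraction directly from Lemma~\ref{OPE}. First I would substitute the definition (\ref{TX}) of $T^{(X)}(x,z)$ and expand the product multilinearly, so that
\begin{gather*}
\langle 0|T^{(X)}(x,z_1)T^{(X)}(x,z_2)\cdots T^{(X)}(x,z_r)|0\rangle
= \sum_{\varepsilon_1,\dots,\varepsilon_r\in I_l} x_{\varepsilon_1}x_{\varepsilon_2}\cdots x_{\varepsilon_r}\,
\langle 0|\Lambda_{\varepsilon_1}(z_1)\Lambda_{\varepsilon_2}(z_2)\cdots\Lambda_{\varepsilon_r}(z_r)|0\rangle .
\end{gather*}
Since the prefactor $\prod_{i<j}f^{(X)}(z_j/z_i)$ and the monomials $x_{\varepsilon_1}\cdots x_{\varepsilon_r}$ are common to both sides, the whole statement comes down to evaluating each vacuum expectation value $\langle 0|\Lambda_{\varepsilon_1}(z_1)\cdots\Lambda_{\varepsilon_r}(z_r)|0\rangle$.

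For this I would invoke the Wick--type normal--ordering identity for exponentials of Heisenberg generators. Each $\Lambda_i(z)$ is a normal--ordered monomial in the vertex operators $Y_j(\cdot)^{\pm1}$, and the relations (\ref{y-y}) give $c$-number commutators among the Fourier modes; hence there are scalar functions $c_{i,j}(z,w)$ (the two--point contractions, i.e.\ the factors produced by moving the annihilation part of $\Lambda_i(z)$ past the creation part of $\Lambda_j(w)$) such that
\begin{gather*}
\Lambda_{\varepsilon_1}(z_1)\Lambda_{\varepsilon_2}(z_2)\cdots\Lambda_{\varepsilon_r}(z_r)
= \prod_{1\le i<j\le r} c_{\varepsilon_i,\varepsilon_j}(z_i,z_j)\cdot
{:}\Lambda_{\varepsilon_1}(z_1)\Lambda_{\varepsilon_2}(z_2)\cdots\Lambda_{\varepsilon_r}(z_r){:}.
\end{gather*}
In the fully normal--ordered product every creation operator stands to the left (annihilating $\langle 0|$) and every annihilation operator to the right (annihilating $|0\rangle$), and because we have suppressed the zero--mode factors (see the Remark following the definition of $T^{(X)}$), the normalization $\langle 0|\Lambda_i(z)|0\rangle=1$ forces $\langle 0|{:}\Lambda_{\varepsilon_1}(z_1)\cdots\Lambda_{\varepsilon_r}(z_r){:}|0\rangle=1$. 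Thus $\langle 0|\Lambda_{\varepsilon_1}(z_1)\cdots\Lambda_{\varepsilon_r}(z_r)|0\rangle=\prod_{i<j}c_{\varepsilon_i,\varepsilon_j}(z_i,z_j)$.

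Next I would identify the contractions via Lemma~\ref{OPE}. That lemma asserts $f^{(X)}(w/z)\,\Lambda_i(z)\Lambda_j(w)=\gamma^{(X)}_{i,j}(z,w)\,{:}\Lambda_i(z)\Lambda_j(w){:}$, which is precisely the statement $c_{i,j}(z,w)=\gamma^{(X)}_{i,j}(z,w)/f^{(X)}(w/z)$ as formal series in $w/z$. Putting $(z,w)=(z_i,z_j)$ for each pair $i<j$ gives $\langle 0|\Lambda_{\varepsilon_1}(z_1)\cdots\Lambda_{\varepsilon_r}(z_r)|0\rangle=\prod_{i<j}\gamma_{\varepsilon_i,\varepsilon_j}(z_i,z_j)/f^{(X)}(z_j/z_i)$, so multiplying by $\prod_{i<j}f^{(X)}(z_j/z_i)$ cancels all denominators and, after re--summing over $\varepsilon_1,\dots,\varepsilon_r\in I_l$, yields exactly $F^{(X)}(x_1,\dots,x_l\,|\,z_1,\dots,z_r\,|\,q,t)$.

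The main obstacle is bookkeeping rather than anything conceptual: one must be careful that for the pair $(i,j)$ with $i<j$ the relevant contraction is $c_{\varepsilon_i,\varepsilon_j}(z_i,z_j)$ \emph{with $z_i$ to the left}, so that it is expanded as a power series in $z_j/z_i$ in the same region where Lemma~\ref{OPE} is stated, and that the $\gamma^{(X)}_{i,j}(z,w)$ become honest rational functions only after the regularizing factor $f^{(X)}(w/z)$ --- which was defined precisely so that $f^{(X)}(w/z)\,c_{i,j}(z,w)$ terminates --- has been multiplied in. Once these ordering conventions and the role of $f^{(X)}$ are pinned down, the remaining computation is the routine Wick contraction above, and verifying the case analysis in $\gamma^{(X)}_{i,j}$ (by type $C_l$ versus $D_l$, and by whether $j=\bar i$) is subsumed into the already--granted Lemma~\ref{OPE}.
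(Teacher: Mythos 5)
Your proposal is correct and follows essentially the same route as the paper: expand $T^{(X)}$ multilinearly via (\ref{TX}), reduce each vacuum expectation value $\langle 0|\Lambda_{\varepsilon_1}(z_1)\cdots\Lambda_{\varepsilon_r}(z_r)|0\rangle$ to a product of two-point contractions by the normal-ordering (Wick) rule, and read off each contraction as $\gamma_{\varepsilon_i,\varepsilon_j}(z_i,z_j)/f^{(X)}(z_j/z_i)$ from Lemma~\ref{OPE}. The paper states this in one line; you have merely spelled out the same argument in more detail.
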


\begin{proof}[Proof of Proposition \ref{CorrGamma}]
In view of the expression~(\ref{TX})
we only need to know the matrix elements
\begin{gather}
\langle 0|\Lambda^{(X)}_{\varepsilon_1}(z_1)\Lambda^{(X)}_{\varepsilon_2}(z_2)\cdots \Lambda^{(X)}_{\varepsilon_r}(z_r)|0 \rangle,
\label{LamLam}
\end{gather}
for any f\/ixed $\varepsilon_1,\ldots,\varepsilon_r\in I_l$.
Then  Proposition~\ref{OPE} and the normal ordering rule imply that
\begin{gather*}
(\ref{LamLam})=\prod_{i<j} \big(f^{(X)}(z_j/z_i) \big)^{-1}\cdot
\prod_{1 \leq i < j \leq r}
\gamma_{\varepsilon_i, \varepsilon_j}(z_{i}, z_{j}).\tag*{\qed}
\end{gather*}
\renewcommand{\qed}{}
\end{proof}

\begin{rem}\label{rem:b3}
It is clearly seen from the def\/inition that $F^{(X)}(x_1,\dots,x_l|z_i,\dots,z_r|q,t)$ is
a~symmetric rational function in~$z_i$'s.
We conjecture that  the $F^{(X)}(x_1,\dots,x_l|z_i,\dots,z_r|q,t)$ is
a~symmetric Laurent polynomial in $x_i$'s associated with the
Weyl group of corresponding type~$X$ ($C_l$~or~$D_l$). For the
case of type~$A_l$, we better understand the situation
due to the theory of the shuf\/f\/le algebra (we refer the reader to~\cite{FHSSY}).
\end{rem}

\begin{dfn}\label{DefOfCorrFunc}
By principally specializing the $z_i$'s,
set
\begin{gather*}
\Phi^{(X)}_r(x|q,t)=F^{(X)}\big(x_1,\dots,x_l|q^{r-1},q^{r-2},\dots,1|q^{1/2},q^{1/2}t^{-1/2}\big).
\end{gather*}
\end{dfn}

\begin{rem} Remark \ref{rem:b3} implies that $\Phi^{(X)}_r(x|q,t)$ is
a symmetric Laurent polynomial in $x_i$'s associated with the
Weyl group of corresponding type~$X$ ($C_l$~or~$D_l$).
It is easy to check that the terms which do not vanish under the principal
specialization correspond exactly to the set of semi-standard tableaux
of type~$X$.
\end{rem}

\begin{thm}\label{soukan1}
We have
\begin{itemize}\itemsep=0pt
\item[$(i)$]
$\Phi^{(C_l)}_{r}(x|q,t) =  P_{(r)}^{(C_l)}(x;q,t,t^2/q)$,
\item[$(ii)$]
$\Phi^{(D_l)}_{r}(x|q,t) =  P_{(r)}^{(D_l)}(x;q,t)$.
\end{itemize}
\end{thm}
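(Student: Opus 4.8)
The plan is to compute the principally specialized correlation function $\Phi^{(X)}_r(x|q,t)$ directly and to recognize the answer as the tableau formula already established in Theorem~\ref{main_C} (for $X=C_l$) or Theorem~\ref{main} (for $X=D_l$), whose rank variable~$n$ we identify with~$l$. By Proposition~\ref{CorrGamma} and Definition~\ref{DefOfCorrFunc}, $\Phi^{(X)}_r(x|q,t)$ is obtained from $\sum_{\varepsilon_1,\dots,\varepsilon_r\in I_l}x_{\varepsilon_1}\cdots x_{\varepsilon_r}\prod_{1\le i<j\le r}\gamma^{(X)}_{\varepsilon_i,\varepsilon_j}(z_i,z_j)$ by setting $z_i=q^{r-i}$ and replacing $(q,t)$ by $(q^{1/2},q^{1/2}t^{-1/2})$. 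Under this substitution the ratios become $z_i/z_j=q^{j-i}$ and the structure function $\gamma(z)=\frac{(1-t^2z)(1-z/q^2)}{(1-z)(1-zt^2/q^2)}$ of Lemma~\ref{OPE} becomes $\widetilde\gamma(z)=\frac{(1-(q/t)z)(1-z/q)}{(1-z)(1-z/t)}$, which vanishes at $z=q$. Hence a sequence $(\varepsilon_1,\dots,\varepsilon_r)$ that fails to be weakly increasing for the order~(\ref{OrderOfC}), resp.~(\ref{OrderOfD}), has an adjacent descent $\varepsilon_i\succ\varepsilon_{i+1}$ contributing a factor $\widetilde\gamma(q)=0$, so only the terms indexed by one-row semi-standard tableaux of type~$X$ survive, as observed in the remark following Definition~\ref{DefOfCorrFunc} (for type~$D$ this is where $\theta_n\theta_{\overline n}=0$ comes from). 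Such a tableau is determined by its content $\theta=(\theta_k)_{k\in I}$, so it remains to evaluate $\prod_{i<j}\widetilde\gamma_{\varepsilon_i,\varepsilon_j}(z_i,z_j)$ for fixed~$\theta$ and match it with the integrand of~(\ref{TSF_C}), resp.~(\ref{TSF_D}), term by term in~$\theta$.

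To evaluate this coefficient, group the pairs $(i,j)$ according to the pair of letters $(a,b)=(\varepsilon_i,\varepsilon_j)$ with $a\preceq b$. Pairs inside one block ($a=b$) give $1$; for $a\prec b$ the $a$'s occupy $\theta_a$ consecutive positions and the $b$'s $\theta_b$ consecutive positions, separated by a gap of $d_{a,b}$ positions, $d_{a,b}$ being the total multiplicity of the letters lying strictly between $a$ and $b$. For fixed $j$ in the $b$-block, $\prod_{i\text{ in the }a\text{-block}}\widetilde\gamma(q^{i-j})$ telescopes, its arguments being consecutive powers of~$q$; taking the product over the $\theta_b$ values of $j$ turns each of the four resulting factors into a $q$-shifted factorial of length $\theta_b$, so the block pair $(a,b)$ contributes an explicit ratio $G_{a,b}(\theta)$ of such factorials. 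When $b=\overline a$ the relevant entry of Lemma~\ref{OPE} carries an extra factor $\gamma$ of $q^{2i-2l\mp2}t^{-2i+2l\pm2}$ times the ratio $z_j/z_i$, where $i$ is the letter-index of the barred pair; under the specialization this argument becomes $q^{-1}t^{i-l}\,z_j/z_i$ for type~$C$ and $t^{i-l+1}\,z_j/z_i$ for type~$D$, and telescoping it in the same way produces an additional factor $H_i(\theta)$, again a ratio of $q$-shifted factorials, now in $t^{\#}q^{\#}$, whose exponents involve the gap $d_{i,\overline i}$.

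Finally one collects these contributions. The product $\prod_{a\prec b}G_{a,b}(\theta)$ over all ordered pairs of distinct letters telescopes---exactly in the manner of the type-$A$ computation in~\cite{FHSSY}---to the $(q,t)$-multinomial $\frac{(q;q)_r}{(t;q)_r}\prod_{k\in I}\frac{(t;q)_{\theta_k}}{(q;q)_{\theta_k}}$, while $\prod_i H_i(\theta)$ reproduces the remaining products $\prod_{1\le l\le n}$ of~(\ref{TSF_C}), resp.\ $\prod_{1\le l\le n-1}$ of~(\ref{TSF_D}): the gaps $d_{l,\overline l}$ become the sums $\theta_{l+1}+\cdots+\theta_{\overline{l+1}}$ and $\theta_l+\cdots+\theta_{\overline{l+1}}$ appearing there, the $t$-exponents $n-l-1,n-l,n-l+1,n-l+2$ come from the two possible shift factors (which differ between $C$ and $D$ precisely as computed above), and the ``$-1$''s in exponents such as $t^{n-l+2}q^{\cdots-1}$ come from the numerator factor $(1-z/q)$ of $\widetilde\gamma$. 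Comparing term by term in~$\theta$ then shows that $\Phi^{(C_l)}_r(x|q,t)$ is the right-hand side of~(\ref{TSF_C}) and $\Phi^{(D_l)}_r(x|q,t)$ is the right-hand side of~(\ref{TSF_D}); by Theorems~\ref{main_C} and~\ref{main} these equal $P^{(C_l)}_{(r)}(x;q,t,t^2/q)$ and $P^{(D_l)}_{(r)}(x;q,t)$, which proves~(i) and~(ii).

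The main obstacle I expect lies in the last two steps: carrying out the double telescoping for the barred pairs and checking that the resulting ratios of $q$-shifted factorials match, exponent by exponent, the somewhat intricate products in~(\ref{TSF_C}) and~(\ref{TSF_D})---in particular that the $t$-powers assemble correctly from the two shift factors and from the reflections $1-t^{-N}q^M=-t^{-N}q^M(1-t^Nq^{-M})$ needed to bring negative $t$-powers to positive ones. A minor point is that Lemma~\ref{OPE} does not list the operator product of $\Lambda^{(D_l)}_l(z)$ and $\Lambda^{(D_l)}_{\overline l}(z)$, whose indices are incomparable; one either computes it from~(\ref{Lam-D}) to see that it vanishes under the principal specialization whenever an $l$ stands next to an $\overline l$, or simply appeals to the already recorded fact that the surviving terms are exactly the semi-standard tableaux of type~$D$.
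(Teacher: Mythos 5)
Your proposal is correct and follows essentially the same route as the paper: the paper's own proof of Theorem~\ref{soukan1} consists of the single sentence that a ``straightforward calculation'' of $\Phi^{(X)}_r(x|q,t)$ yields the tableau formulas (\ref{MacOfCt^2/q}) and (\ref{MacOfD}), which are then identified with the Macdonald polynomials via Theorems~\ref{main_C} and~\ref{main}. What you have written is precisely that calculation spelled out --- the vanishing of non-semistandard terms from $\widetilde\gamma(q)=0$, the telescoping of the $\gamma$-products into $q$-shifted factorials, and the extra factor from the $j=\bar\imath$ pairs --- and your specialized arguments $q^{-1}t^{i-l}$ (type $C$) and $t^{i-l+1}$ (type $D$) check out.
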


\begin{proof} Straightforward calculation of
$\Phi^{(X)}_r(x|q,t)$
gives us (\ref{MacOfCt^2/q}) for $X=C_l$ and~(\ref{MacOfD}) for $X=D_l$.
\end{proof}

\appendix
\section[Macdonald polynomials of types $C_n$ and $D_n$]{Macdonald polynomials of types $\boldsymbol{C_n}$ and $\boldsymbol{D_n}$}\label{appendixA}

We recall brief\/ly the Koornwinder polynomials, and the def\/initions
of the Macdonald polynomials of types~$C_n$ and~$D_n$
as degenerations of the Koornwinder polynomials.

\subsection{Koornwinder polynomials} \label{appendixA.1}

Let $(a,b,c,d,q,t)$ be a set of complex parameters with $|q|<1$.
Set  $\alpha=(abcdq^{-1})^{1/2}$ for simplicity.
Let $x=(x_1,\ldots,x_n)$ be a set of independent indeterminates.
Koornwinder's $q$-dif\/ference operator  ${\mathcal D}_x={\mathcal D}_x(a,b,c,d|q,t)$ is def\/ined by~\cite{Ko}
\begin{gather*}
{\mathcal D}_x=
\sum_{i=1}^n {(1-ax_i)(1-bx_i)(1-cx_i)(1-dx_i)\over
\alpha t^{n-1}(1-x_i^2)(1-qx_i^2)}
\prod_{j\neq i} {(1-t x_ix_j)(1-t x_i/x_j)\over (1-x_ix_j)(1-x_i/x_j)}
(T_{q,x_i}-1) \\
{}+
\sum_{i=1}^n {(1-a/x_i)(1-b/x_i)(1-c/x_i)(1-d/x_i)\over
\alpha t^{n-1}(1-1/x_i^2)(1-q/x_i^2)}
\prod_{j\neq i} {(1-t x_j/x_i)(1-t /x_ix_j)\over (1-x_j/x_i)(1-1/x_ix_j)}
(T_{q^{-1},x_i}-1),
\end{gather*}
where  \looseness=-1 $T_{q^{\pm 1},x_i}f(x_1,\ldots,x_i,\ldots ,x_n)=f(x_1,\ldots,q^{\pm 1}x_i,\ldots ,x_n)$.
The Koornwinder polynomial $P_\lambda(x)=P_\lambda(x;a,b,c,d,q,t)$
with partition $\lambda=(\lambda_1,\ldots,\lambda_n)$
(i.e., $\lambda_i\in \mathbb{Z}_{\geq 0},\lambda_1\geq \cdots\geq \lambda_n$)
is uniquely characterized by the two conditions
(a)~$P_\lambda(x)$ is a $S_n \ltimes (\mathbb{Z}/2\mathbb{Z})^n$ invariant Laurent
polynomial having the triangular expansion in terms of the monomial
basis $(m_\lambda(x))$ as $P_\lambda(x)=m_\lambda(x)+\mbox{lower terms}$,
(b)~$P_\lambda(x)$ satisf\/ies ${\mathcal D}_x P_\lambda(x)=d_\lambda P_\lambda(x)$.
The eigenvalue is given by
\begin{gather*}
d_\lambda=\sum_{j=1}^n \langle abcdq^{-1}t^{2n-2j}q^{\lambda_j}\rangle
\langle q^{\lambda_j}\rangle
=
\sum_{j=1}^n \langle \alpha t^{n-j}q^{\lambda_j}; \alpha t^{n-j}
\rangle,
\end{gather*}
where we used the notation
$\langle x\rangle=x^{1/2}-x^{-1/2}$ and
$\langle x;y\rangle=\langle xy\rangle\langle x/y\rangle=x+x^{-1}-y-y^{-1}$
for simplicity of display.

\subsection[Macdonald polynomials of types $C_n$ and $D_n$]{Macdonald polynomials of types $\boldsymbol{C_n}$ and $\boldsymbol{D_n}$}
\label{appendixA.2}
We consider some degeneration of the Koornwinder polynomials to
the Macdonald polynomials. As for the details, we refer the readers to~\cite{Ko} and~\cite{Mac1}.
Specializing the parameters in the Koornwinder polynomial $P_\lambda(x;a,b,c,d,q,t)$ as
\begin{gather*}
(a,b,c,d,q,t)\rightarrow \big({-}b^{1/2},ab^{1/2}, -q^{1/2}b^{1/2},q^{1/2}ab^{1/2},q,t\big),
\end{gather*}
we obtain the Macdonald polynomial of type $(BC_n,C_n)$ \cite{Ko}
\begin{gather*}
P^{(BC_n,C_n)}_\lambda(x;a,b,q,t)=P_\lambda\big(x;-b^{1/2},ab^{1/2}, -q^{1/2}b^{1/2},q^{1/2}ab^{1/2},q,t\big).
\end{gather*}
Namely, setting
\begin{gather*}
D^{(BC_n,C_n)}_x=\sum_{\sigma_1,\dots,\sigma_n=\pm 1}
\prod_{i=1}^n
{(1-a b^{1/2} x_i^{\sigma_i})(1+b^{1/2} x_i^{\sigma_i})\over 1-x_i^{2\sigma_i}}\cdot
\prod_{1\leq i<j\leq n}
{1-t x_i^{\sigma_i} x_i^{\sigma_j}\over 1-x_i^{\sigma_i} x_i^{\sigma_j}} \cdot
\prod_{i=1}^n
T_{q^{\sigma_i/2},x_i},
\end{gather*}
we have{\samepage
\begin{gather*}
P^{(BC_n,C_n)}_\lambda(x)=m_\lambda(x)+\mbox{lower terms},\\
D^{(BC_n,C_n)}_x P^{(BC_n,C_n)}_\lambda(x)=
(ab)^{n/2}t^{n(n-1)/4}\left( \sum_{\sigma_1,\dots,\sigma_n=\pm 1}
 s_1^{\sigma_1/2}\cdots s_n^{\sigma_n/2}\right)
P^{(BC_n,C_n)}_\lambda(x),
\end{gather*}
where $s_i=ab t^{n-i}q^{\lambda_i}$.}

Macdonald polynomials of type $C_n$ and type $D_n$ are obtained as follows
\begin{gather*}
P^{(C_n)}_\lambda(x;b,q,t)=P^{(BC_n,C_n)}_\lambda(x;1,b,q,t),\\
P^{(D_n)}_\lambda(x;q,t)=P^{(BC_n,C_n)}_\lambda(x;1,1,q,t).
\end{gather*}

\subsection*{Acknowledgements}

This study was carried out within The National Research University Higher
School of Economics Academic Fund Program in 2013--2014, research
grant No.12-01-0016k.
Research of J.S.\ is supported by the Grant-in-Aid for Scientif\/ic
Research C-24540206.
The f\/inancial support from the Government of the Russian Federation within
the framework of the implementation of the 5-100 Programme Roadmap of the
National Research University  Higher School of Economics is acknowledged.
The authors sincerely thank the anonymous referees for valuable comments and informing them of the connection between
one of our results (Theorem~\ref{koushiki-2}) and the known fact obtained in~\cite{LSW}.

\pdfbookmark[1]{References}{ref}
\LastPageEnding

\end{document}